\newtheorem{theorem}{Theorem}
\newtheorem{remark}[theorem]{Remark}
\newtheorem{lemma}[theorem]{Lemma}
\newtheorem{corollary}[theorem]{Corollary}
\newtheorem{definition}[theorem]{Definition}
\newtheorem{example}[theorem]{Example}
\newcommand\1{{\mathds 1}}
\def\C{{\mathbb C}}
\def\bbI{{\mathbb I}}
\def\N{{\mathbb N}}
\def\R{{\mathbb R}}
\def\SS{{\mathbb S}}
\def\TT{{\mathbb T}}
\def\Z{{\mathbb Z}}
\def\ZZ{{\mathbb Z}}
\def\bA{{\mathbf A}}
\def\bk{{\mathbf k}}
\def\bx{{\mathbf x}}
\def\by{{\mathbf y}}
\def\bz{{\mathbf z}}
\def\bnull{{\mathbf 0}}
\def\rd{{\mathrm{d}}}
\def\re{{\mathrm{e}}}
\def\ri{{\mathrm{i}}}
\def\cB{{\mathcal B}}
\def\cD{{\mathcal D}}
\def\cH{{\mathcal H}}
\def\cI{{\mathcal I}}
\def\cN{{\mathcal N}}
\def\cS{{\mathcal S}}
\def\cU{{\mathcal U}}
\newcommand{\Tr}{{\rm Tr}}
\newcommand{\bra}{\langle}
\newcommand{\ket}{\rangle}
\renewcommand{\epsilon}{\varepsilon}
\def\Ker{{\rm Ker}}
\def\U{{\rm U}}
\def\Mas{{\rm Mas}}
\def\Sf{{\rm Sf}}
\def\Winding{{\rm Winding}}
\author{David Gontier}
\address{CEREMADE, University of Paris-Dauphine, PSL University, 75016 Paris, France}
\email{gontier@ceremade.dauphine.fr}
\title[Edge states for elliptic operators in a channel]{Edge states for second order elliptic operators in a channel}
\date{\today}
\begin{document}

\begin{abstract}
We present a general framework to study edge states for second order elliptic operators in a half channel. We associate an integer valued index to some bulk materials, and we prove that for any junction between two such materials, localised states must appear at the boundary whenever the indices differ.

\bigskip

\noindent \sl \copyright~2022 by the author. This paper may be reproduced, in its entirety, for non-commercial purposes.
\end{abstract}

\maketitle


\section{Introduction and statement of the main results}

Bulk-edge correspondence states that one can associate an integer valued index $\cI \in \Z$ to some bulk materials (represented here by Schrödinger (PDE) or Hill's (ODE) operators). When the material is cut, edge states appear at the boundary whenever $\cI \neq 0$. In addition, it is believed that any junction between a left and a right material having indices $\cI_L$ and $\cI_R$ must also have edge states near the junction whenever $\cI_L \neq \cI_R$. We prove this fact in this paper.

Since the original works of Hatsugai~\cite{hatsugai1993chern, hatsugai1993edge}, most studies on bulk-edge correspondence focused on tight-binding models ({\em e.g.}~\cite{graf2013bulk, avila2013topological}), set on half-spaces. In these tight-binding models, boundary conditions at the cut are quite simple to describe, and it turns out that the index is independent of these boundary conditions. In the context of continuous models, it is unclear that one can define an index which is indeed independent of the chosen boundary conditions. In~\cite{Gontier2020edge}, we proved that it was the case in a simple one-dimensional model for dislocations. We extend this work here, and give a general framework to define the edge index for different self-adjoint extensions of Schrödinger operators.

We consider two types of continuous models. In the first part, we study families of Hill's operator (ODE) set on $\C^n$, of the form
\[
    h_t := - \partial_{xx}^2 + V_t(x), \quad \text{acting on} \quad L^2(\R, \C^n),
\]
where $t \mapsto V_t$ is a continuous periodic family of bounded potentials, with values in the set of $n \times n$ hermitian matrices. When $t$ is seen as the time variable, this equation models a Thouless pump~\cite{thouless1983quantization, braunlich2010equivalence}. In the case where $V_t(x) = V(x - t)$, the variable $t$ is interpreted as a dislocation parameter~\cite{drouot2018bulk, Gontier2020edge}. On the second part of the article, we study its PDE version, that is families of Schrödinger's operators of the form
\[
    H_t := - \Delta + V_t(x,y), \quad \text{acting on} \quad L^2 \left( \R \times (0,1)^{d-1}, \C   \right).
\]
Here, $\R \times (0, 1)^{d-1}$ is a tube in $\R^d$, and we impose periodic boundary conditions in the last $(d-1)$-directions. Our setting also allows to treat two-dimensional PDE operators of the form
\[
    \widetilde{H_t} := - \partial_{xx}^2 + (- \ri \partial_y + 2 \pi t)^2 + V(x,y), 
    \quad \text{acting on} \quad L^2 \left( \R \times (0,1), \C   \right),
\]
where $k := 2 \pi t$ is interpreted as the Bloch quasi-momentum in the $y$-direction. Such families of operators appear in the study of two-dimensional materials, once a Bloch transform has been performed in the $y$-direction.

In these models, we interpret the bulk-edge index as the intersection of Lagrangian planes on a {\em boundary space} $\cH_b$. Roughly speaking, this space contains the values $(\psi(0), \psi'(0))$ of the admissible wave-functions $\psi$. In the context of Hill's operators, we take $\cH_b = \C^n \times \C^n$, while for Schrödinger operators, $\cH_b = H^{3/2}(\Gamma)\times H^{1/2}(\Gamma)$, where $\Gamma := \{ 0 \} \times (0, 1)^{d-1}$ is the cut.

The link between edge states and Lagrangian planes was already mentioned {\em e.g.} in~\cite{avila2013topological} for discrete models (tight-binding approximation). Based on the recent developments on Lagrangian planes and second order elliptic operator by Howard, Latushkin and Sukhtayev in a series on papers~\cite{howard2016maslov, howard2017maslov, howard2018maslov} (see~\cite{kostrykin1999kirchhoff} for older results in an ODE setting), we extend the picture to the continuous case. This framework allows in particular to treat the PDE setting following~\cite{latushkin2018maslov}, based on the seminal work of Booß-Bavnbek and Furutani on infinite dimensional Lagrangian planes~\cite{booss1998maslov, booss2013maslov, furutani2004fredholm}.

\medskip

Let us state our main results for Hill's operators. They extend the previous works~\cite{drouot2018bulk, Gontier2020edge} and shed a new light on the results by Bräunlich, Graf and Ortelli in~\cite{braunlich2010equivalence}. Some of our results can already be found in the last article. However, the proofs in~\cite{braunlich2010equivalence} use the notion of {\em frames} of solutions. In the present article, we provide proofs which do not rely on this notion, so that we can generalise them to the Schrödinger case, where one cannot construct such frames of solutions.

 Let $n \in \N \setminus \{0\}$ be fixed, and let
\begin{equation} \label{eq:form_V}
V_{t}(x) := V(t,x) : \TT^1 \times \R \to \cS_n,
\end{equation}
be a periodic family of matrix-valued bounded potentials (which are not necessarily periodic in $x$). Here, $\TT^1 \approx [0, 1]$ is the one-dimensional torus, and $\cS_n$ denotes the set of $n \times n$ hermitian matrices. We assume that $t \mapsto V_t$ is continuous from $\TT^1$ to $L^\infty(\R, \cS_n)$. We consider the family of (bulk) Hill's operators
\[
    h_t := - \partial_{xx}^2 + V_t \quad \text{acting on} \quad L^2(\R, \C^n).
\]
For $E \in \R$, we say that $E$ is in the gap of the family $(h_t)$ if $E \notin \sigma(h_t)$ for all $t \in \TT^1$. We also consider the family of (edge) Hill's operators
\[
h^\sharp_{D, t} := - \partial_{xx}^2 + V_t \quad \text{acting on} \quad L^2(\R^+, \C^n),
\]
with Dirichlet boundary conditions at $x = 0$. While $E$ is not in the spectrum of the bulk operator $\sigma(h_t)$, it may belong to the spectrum of the edge operator $h^\sharp_{D, t}$. In this case, the corresponding eigenstate is called an {\em edge mode}. 

As $t$ runs through $\TT^1 \approx [0, 1]$, a spectral flow may appear for the family $h^\sharp_{D, t}$. We denote by $\Sf(h^\sharp_{D, t}, E, \TT^1)$ the net number of eigenvalues of $h^\sharp_{D, t}$ going {\bf downwards} in the gap where $E$ lies. We define the index of $(h_t)_{t \in \TT^1}$ as this spectral flow:
\[
    \cI(h_t, E) := \Sf \left( h^\sharp_{D, t}, E, \TT^1  \right).
\]
Our main theorem is the following (see Theorem~\ref{th:bec_hill} for the proof in the Hill's case, and Theorem~\ref{th:bec_schrodinger} for the one in the Schrödinger case).

\begin{theorem}[Junctions between two channels]
    Let $t \mapsto V_{R,t}$ and $t \mapsto V_{L,t}$ be two continuous periodic families of bounded potentials on $\R$. Let $E \in \R$ be in the gap of both corresponding (bulk) Hill's operators $(h_{L, t})$ and $(h_{R, t})$. Let $\chi : \R \to [0, 1]$ be any switch function, satisfying $\chi(x) = 1$ for $x < -X$ and $\chi(x)  = 0$ for $x > X$ for some $X > 0$, and let
    \[
        h^\chi_t := - \partial_{xx}^2 + V_{L, t}(x) \chi(x) + V_{R, t}(x) (1 - \chi(x)).
    \]
    Then
    \[
         \boxed{ \Sf \left( h^\chi_{t}, E, \TT^1  \right) = \cI(h_{R, t}, E) - \cI(h_{L,t}, E).}
    \]
\end{theorem}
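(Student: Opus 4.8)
The plan is to reduce the statement to a computation of Maslov indices in the boundary symplectic space $\cH_b=\C^n\times\C^n$, and then to exploit the additivity of the Maslov index for loops together with one key transversality lemma. Throughout, $E$ lies in the common gap, so the equation $-\psi''+V_{M,t}\psi=E\psi$ (for $M\in\{L,R\}$) has an exponential dichotomy: the solutions decaying at $+\infty$ (resp. at $-\infty$) have Cauchy data $(\psi(0),\psi'(0))$ spanning an $n$-dimensional subspace $\cL^+_{M,t}$ (resp. $\cL^-_{M,t}$) of $\cH_b$, which is Lagrangian for the Wronskian symplectic form (two decaying solutions have constant Wronskian vanishing at the decay end, hence zero). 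I will use the reduction established in the earlier sections: for a loop of operators with $E$ in the essential-spectrum gap, the spectral flow through $E$ equals the Maslov index of the associated path of Cauchy-data planes.

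First I would identify each index as a Maslov index against the fixed Dirichlet plane $\cL_{\mathrm D}=\{0\}\times\C^n$. An eigenstate of $h^\sharp_{D,t}$ at $E$ is a solution on $\R^+$ decaying at $+\infty$ with $\psi(0)=0$, i.e. a nonzero vector of $\cL^+_{M,t}\cap\cL_{\mathrm D}$; hence $\cI(h_{M,t},E)=\mu(\cL^+_{M,\cdot})$, where $\mu(\cdot)$ denotes the Maslov index of the loop $t\in\TT^1\mapsto\cL^+_{M,t}$ taken against the constant reference $\cL_{\mathrm D}$ (for a loop this integer is independent of the fixed reference chosen, being the winding number of a determinant).

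Next I would treat the junction. Because the essential spectrum of $h^\chi_t$ is determined by the bulk values $V_{L,t},V_{R,t}$ at $\pm\infty$ and $E$ lies in both bulk gaps, $E$ stays in the essential-spectrum gap of $h^\chi_t$ for every admissible switch profile and every $t$; the spectral flow is therefore invariant under a homotopy of $\chi$. I would deform $\chi$ to the sharp step $\mathbf{1}_{\{x<0\}}$, so that $h^\chi_t$ equals $-\partial_{xx}^2+V_{L,t}$ on $\R^-$ and $-\partial_{xx}^2+V_{R,t}$ on $\R^+$. An $L^2(\R)$ eigenstate at $E$ then decays at both ends, so its Cauchy data at $x=0$ lie in $\cL^-_{L,t}\cap\cL^+_{R,t}$, and the reduction gives $\Sf(h^\chi_t,E,\TT^1)=\Mas(\cL^+_{R,\cdot},\cL^-_{L,\cdot};\TT^1)$. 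For loops the Maslov index splits as a difference of windings, so $\Mas(\cL^+_{R,\cdot},\cL^-_{L,\cdot})=\mu(\cL^+_{R,\cdot})-\mu(\cL^-_{L,\cdot})=\cI(h_{R,t},E)-\mu(\cL^-_{L,\cdot})$.

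It remains to prove the key lemma $\mu(\cL^-_{L,\cdot})=\mu(\cL^+_{L,\cdot})=\cI(h_{L,t},E)$; this is where the left/right asymmetry of the definition is resolved, and it is the step I expect to be the main obstacle. Since $E\notin\sigma(h_{L,t})$ there is no $L^2(\R)$ solution, so $\cL^+_{L,t}$ and $\cL^-_{L,t}$ are transverse for every $t$. Representing Lagrangian planes by unitaries $A_t,B_t\in\U(n)$ (so that $\mu(\cL^+_{L,\cdot})$ and $\mu(\cL^-_{L,\cdot})$ are the winding numbers of $\det A_t$ and $\det B_t$, and transversality reads $1\notin\sigma(A_tB_t^{-1})$), the relative unitary $C_t:=A_tB_t^{-1}$ stays in the set $\{C\in\U(n):1\notin\sigma(C)\}$, which is contractible via the Cayley transform $C\mapsto\ri(I+C)(I-C)^{-1}$ onto the space of hermitian matrices. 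Hence the loop $C_t$ is null-homotopic, the winding of $\det C_t$ vanishes, and $\mu(\cL^+_{L,\cdot})=\mu(\cL^-_{L,\cdot})$. Combining the three displays yields $\Sf(h^\chi_t,E,\TT^1)=\cI(h_{R,t},E)-\cI(h_{L,t},E)$; the only remaining care is to fix all orientation conventions so that the Maslov pairing matches the downward spectral-flow convention, and to verify the continuity and Lagrangian property of the decaying planes, both of which follow from the exponential dichotomy at the gap energy.
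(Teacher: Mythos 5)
Your proposal is correct, and while it shares the paper's skeleton --- the reduction of spectral flow to the Maslov index of the pair of decaying-solution planes (Theorem~\ref{th:main-Hill} adapted to the junction), followed by the determinant-winding splitting of Lemma~\ref{lem:splitMaslov} --- it differs from the paper's proof of Theorem~\ref{th:bec_hill} at both of the key auxiliary steps, and in instructive ways. First, to decouple the junction you deform the switch function $\chi$ to the sharp step $\1_{\{x<0\}}$ and invoke homotopy invariance of the spectral flow; this needs the (true, but worth stating explicitly) fact that $E$ stays off $\sigma_{\rm ess}(h^{\chi_s}_t)$ along the deformation, which follows since $V^{\chi_s}_t - V^{\chi_0}_t$ is a compactly supported, hence relatively compact, perturbation and $\sigma_{\rm ess}(h^{\chi_0}_t) \subset \sigma(h_{L,t}) \cup \sigma(h_{R,t})$. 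The paper instead keeps $\chi$ fixed and slides the cut location $x_0$ from $0$ out to $\pm X$, using that Cauchy solutions extend to the whole line so that $x_0 \mapsto \ell^{\pm}_{\chi,t}(x_0,E)$ is continuous, and then invokes homotopy invariance of the loop index. The paper's trick is ODE-specific; your operator-level homotopy is exactly the mechanism the paper itself uses in the Schr\"odinger case (Theorem~\ref{th:bec_schrodinger}), so your argument transfers to the PDE setting essentially unchanged. Second, to convert $\cI(\ell^-_{L,t}(E), \TT^1)$ into the right-half-line Dirichlet index $\cI(h_{L,t},E) = \cI(\ell^+_{L,t}(E), \TT^1)$, the paper argues a posteriori, by applying the already-proved junction theorem to $V_L = V_R$ (Remark~\ref{rem:dropping_the_index}); you instead prove this equality directly: $E \notin \sigma(h_{L,t})$ gives $\ell^+_{L,t}(E) \oplus \ell^-_{L,t}(E) = \cH_b$ (Theorem~\ref{th:Lagrangian_planes_Hill_lE}), so the relative unitary avoids the eigenvalue $1$ for all $t$, and the set $\left\{ C \in \U(n), \ 1 \notin \sigma(C) \right\}$ is contractible via the Cayley transform, forcing the relative winding to vanish. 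This lemma does not appear in the paper; it is a clean, self-contained topological argument that makes the left/right symmetry of the bulk index transparent rather than a by-product of the main theorem. The loose ends you flag yourself --- matching the downward spectral-flow convention with the sign of the Maslov crossing form, and the regularity/dichotomy facts about the decaying planes --- are genuine but routine, and are treated at a comparable level of explicitness in the paper's own proof.
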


The operator $h^\chi_t$ is a domain wall operator. On the far left, we see the potential $V_{L, t}$, while on the far right, we see $V_{R, t}$, so this operator models a junction between a left potential and a right one. This theorem states that edge states must appear at the junction if the left and right indices differ.



\subsection*{Plan of the paper}

In Section~\ref{sec:first_fact_Hill}, we recall some basic facts on symplectic spaces and self-adjoint extensions of operators. We then prove our results concerning Hill's operators in Section~\ref{sec:family_Hill}, and explain how to adapt the proofs for Schrödinger operators in Section~\ref{sec:Schrodinger}.

\subsection*{Notation of the paper}

We write $\N := \{ 1, 2, 3, \cdots \}$ and $\N_0 := \{ 0, 1, 2, 3, \cdots \}$. For $z_0 \in \C$ and $r > 0$, we set $B(z, r) := \left\{ z \in \C, \ | z - z_0 | < r\right\}$ the open ball in the complex plane.\\
For $\Omega \subset \R^d$ an open set, we denote by $L^p(\Omega, \C)$ the usual Lebesgue spaces, and by $H^s(\Omega, \C)$ the Sobolev ones. The set $H^s_0(\Omega, \C)$ is the completion of $C^\infty_0(\Omega, \C)$ for the $H^s$ norm.

Let $\cH_1$ and $\cH_2$ be two Hilbert spaces. For a bounded operator $A : \cH_1 \to \cH_2$, its dual $A^*$ is the map from $\cH_2 \to \cH_1$ so that
\[
\forall x_1 \in \cH_1, \ \forall x_2 \in \cH_2, \quad \bra x_2, A x_1 \ket_{\cH_2} = \bra A^* x_2, x_1 \ket_{\cH_1}.
\]
The operator $A$ is {\em unitary} if $A^* A = \bbI_{\cH_1}$ and $A A^* = \bbI_{\cH_2}$. 

For $E$ a Banach space, we say that a map $t \mapsto v(t) \in E$ is continuously differentiable if $v'(t)$ is well-defined in $E$ for all $t$ (that is $\| v'(t) \|_E < \infty$), and if $t \mapsto v'(t)$ is continuous.

\subsection*{Acknowledgements}

This work was funded by the CNRS international cooperation tool PICS. We thank the anonymous referees for their comments and suggestions. We also thank the anonymous editor for pointing out an error in the initial version, and for the very interesting detailed comments and references on boundary triples.

\section{First facts and notations}
\label{sec:first_fact_Hill}

\subsection{Lagrangian planes in complex Hilbert spaces}
\label{ssec:LagrangianPlanes}

Let us first recall some facts on symplectic Banach/Hilbert spaces. In the original work of Maslov~\cite{maslov1972theorie}, popularised by Arnol'd~\cite{arnol1967characteristic}, the authors consider {\em real} Banach spaces $E$. Following the recent developments, we present the theory for complex Banach spaces. 

\subsubsection{Basics in symplectic spaces}

Let $E$ be a complex Banach space. A symplectic form on $E$ is a non degenerate continuous sesquilinear form $\omega : E \times E \to \C$ such that
\[
    \forall x, y \in E, \quad \omega(x, y) = -  \overline{ \omega(y, x)} .
\]
For $\ell$ a linear subspace of $E$, we denote by 
\[
\ell^\circ := \left\{ x \in E, \quad \forall y \in \ell, \quad \omega(x,y) = 0\right\}.
\]
The space $\ell^\circ$ is always closed. Such a subspace is called {\bf isotropic} if $\ell \subset \ell^\circ$, {\bf co-isotropic} if $\ell^\circ \subset \ell$, and {\bf Lagrangian} if $\ell = \ell^\circ$. We also say that $\ell$ is a {\bf Lagrangian plane} in the latter case. The set of all Lagrangian planes of $E$,  sometime called the Lagrangian-Grassmanian, is denoted by $\Lambda(E)$.

\begin{example}[In $\R^{2n}$] \label{ex:symplectic_R2n}
    In the {\em real} Hilbert space $E = \R^n \times \R^n$, the canonical symplectic form is given by (we write $\bx = (x, x')$, $\by = (y, y')$, {\em etc.} the elements in $\R^n \times \R^n$)
    \[  
    \forall \bx, \by \in \R^n \times \R^n, \quad 
    \omega (\bx, \by) := \bra x,  y' \ket_{\R^n} - \bra x',  y \ket_{\R^n},
    \]
    When $n = 1$, the Lagrangian planes are all the one-dimensional linear subspaces of $\R^2$. Conversely, if $(\R^N, \omega)$ is a symplectic space, then $N = 2n$ is even, and all Lagrangian planes are of dimension $n$.
\end{example}

\begin{example}[In $\C^{2n}$] \label{ex:symplectic_C2n} 
    Similarly, in the {\em complex} Hilbert space $\C^{2n}$, the canonical symplectic form is given by (we write again $\bz = (z, z')$ the elements in $\C^n \times \C^n$) 
    \[  
    \forall \bz_1, \bz_2 \in \C^n \times \C^n, \quad
    \omega (\bz_1, \bz_2) := \bra z_1,  z_2' \ket_{\C^n} - \bra z_1',  z_2 \ket_{\C^n}.
    \]
    When $n = 1$ for instance, the Lagrangian planes are the one-dimensional linear spaces $L = {\rm Vect}_\C(\bz)$ with $\bz = (z, z')$ satisfying the extra condition $\overline{z} z' \in \R$. Up to a phase, we may always assume $z \in \R$, in which case $z' \in \R$ as well. So the Lagrangian planes are the one-dimensional subspaces of $\C^2$ of the form ${\rm Vect}_\C(\bz)$ with $\bz \in \R^{2}$.
\end{example}

\begin{example}[In $\C^N$] \label{ex:counterExample_symplectic_CN}
    Another example is given by the symplectic form
    \[
    \forall \bz_1, \bz_2 \in \C^N, \quad \widetilde{\omega}(\bz_1, \bz_2) = \ri \bra \bz_1, \bz_2 \ket_{\C^N}.
    \]
    With this symplectic form, a vector $\bz \in \C^N$ is never isotropic, since $\widetilde{\omega} (\bz, \bz) = \ri \| \bz \|^2 \neq 0$ for $z \neq 0$. In particular, $(\C^N, \widetilde{\omega})$ does not have Lagrangian subspaces.
\end{example}

We record the following result.
\begin{lemma} \label{lem:useful_result_lagrangians}
    If $\ell_1 \subset \ell_1^\circ$ and $\ell_2 \subset \ell_2^\circ$ are two isotropic subspaces with $\ell_1 + \ell_2 = E$, then $\ell_1$ and $\ell_2$ are Lagrangians, and $\ell_1 \oplus \ell_2 = E$.
\end{lemma}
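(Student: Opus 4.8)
The plan is to reduce everything to two elementary properties of the polar operation $\ell \mapsto \ell^\circ$ and then exploit the non-degeneracy of $\omega$. First I would record that, because $\omega$ is additive in its second argument, one has $(\ell_1 + \ell_2)^\circ = \ell_1^\circ \cap \ell_2^\circ$ for arbitrary subspaces $\ell_1,\ell_2$: an element $x$ annihilates every $y = y_1 + y_2 \in \ell_1 + \ell_2$ under $\omega$ if and only if it annihilates each $\ell_i$ separately. Second, non-degeneracy of $\omega$ is precisely the statement $E^\circ = \{0\}$. Combining these with the hypothesis $\ell_1 + \ell_2 = E$ gives at once
\[
\ell_1^\circ \cap \ell_2^\circ = (\ell_1 + \ell_2)^\circ = E^\circ = \{0\}.
\]

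Next I would establish the direct sum. Using the isotropy hypotheses $\ell_1 \subset \ell_1^\circ$ and $\ell_2 \subset \ell_2^\circ$, one has $\ell_1 \cap \ell_2 \subset \ell_1^\circ \cap \ell_2^\circ = \{0\}$, hence $\ell_1 \cap \ell_2 = \{0\}$. Together with $\ell_1 + \ell_2 = E$ this yields $\ell_1 \oplus \ell_2 = E$, the last assertion of the lemma.

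It then remains to upgrade the inclusions $\ell_i \subset \ell_i^\circ$ to equalities. For this I would take an arbitrary $x \in \ell_1^\circ$ and decompose it along the direct sum as $x = x_1 + x_2$ with $x_1 \in \ell_1$ and $x_2 \in \ell_2$. Since $x_1 \in \ell_1 \subset \ell_1^\circ$ and $\ell_1^\circ$ is a subspace, $x_2 = x - x_1 \in \ell_1^\circ$; on the other hand $x_2 \in \ell_2 \subset \ell_2^\circ$. Hence $x_2 \in \ell_1^\circ \cap \ell_2^\circ = \{0\}$, so $x = x_1 \in \ell_1$, which proves $\ell_1^\circ \subset \ell_1$ and therefore $\ell_1 = \ell_1^\circ$. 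The same argument with the roles of the two indices exchanged gives $\ell_2 = \ell_2^\circ$.

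This argument is purely algebraic, so I anticipate no genuine obstacle; the only point requiring a little care is the identity $(\ell_1+\ell_2)^\circ = \ell_1^\circ \cap \ell_2^\circ$, where one uses additivity of $\omega$ in the relevant slot rather than full linearity, so that the sesquilinear nature of the form causes no trouble. If the symbol $\oplus$ is to be read as a \emph{topological} direct sum, one can add that $\ell_1 = \ell_1^\circ$ and $\ell_2 = \ell_2^\circ$ are closed (being polars of the continuous form $\omega$), so the closed graph theorem promotes the algebraic splitting to a topological one.
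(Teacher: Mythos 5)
Your proof is correct and follows essentially the same route as the paper's: deduce $\ell_1^\circ \cap \ell_2^\circ = \{0\}$ from $\ell_1+\ell_2 = E$ and non-degeneracy, get the direct sum from isotropy, and then decompose an arbitrary $x \in \ell_i^\circ$ along $\ell_1 \oplus \ell_2$ to upgrade the inclusion to an equality. The only difference is cosmetic: you spell out the justification of the first step via the identity $(\ell_1+\ell_2)^\circ = \ell_1^\circ \cap \ell_2^\circ$, which the paper leaves implicit.
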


\begin{proof}
    Since $\ell_1 + \ell_2 = E$, we have $\{ 0 \} = \ell_1^\circ \cap \ell_2^\circ$. In particular, $\ell_1 \cap \ell_2 \subset \ell_1^\circ \cap \ell_2^\circ = \{ 0\}$ as well, so $\ell_1 \oplus \ell_2 = E$. Let $x \in \ell_1^\circ \subset E$, and write $x = x_1 + x_2$ with $x_1 \in \ell_1$ and $x_2 \in \ell_2$. Since $\ell_1 \subset \ell_1^\circ$, we have $x_2 = x - x_1 \in \ell_1^\circ$ as well, so $x_2 \in \ell_1^\circ \cap \ell_2 \subset \ell_1^\circ \cap \ell_2^\circ = \{ 0 \}$. This proves that $x = x_1 \in \ell_1$, hence $\ell_1^\circ = \ell_1$. The proof for $\ell_2$ is similar.
\end{proof}


\subsubsection{Lagrangian planes of Hilbert spaces and unitaries.}

In the case where $E = \cH_b$ is a Hilbert space, with inner product $\bra \cdot, \cdot \ket_{\cH_b}$, for all $x \in \cH_b$, the map
\[
T_x : y \mapsto \omega(x, y)
\]
is linear and bounded. So, by Riesz' representation theorem, there exists $v \in \cH_b$ so that $T_x(y) = \bra v, y \ket_{\cH_b}$. We denote by $J^*x := v$ this element. This defines an operator $J^*: \cH_b \to \cH_b$, satisfying
\[
     \forall x,y \in \cH_b, \quad \omega(x, y) = \bra J^* x, y \ket_{\cH_b} = \bra x, J y \ket_{\cH_b}.
\]
In particular, since $\omega$  is bounded, we have
\[
    \| Jy \|_{\cH_b}^2 = \bra Jy, Jy \ket_{\cH_b} = \omega(Jy, y) \le C_\omega \| Jy \| \cdot \| y \|,
\]
so $\| J y \| \le C_\omega \| y \|$, and $J$ is a bounded operator. In addition, from the relation $\omega(x,y) = - \overline{\omega(y, x)}$, we get that 
\[
    \bra x, J y \ket_{\cH_b} = - \bra J x, y \ket_{\cH_b},
\] 
that is $J = - J^*$. Finally, since $\omega$ is not degenerate, we have ${\rm Ker}(J)= \{ 0 \}$.
\begin{example}
    On $\C^{2n}$ with the canonical symplectic form $\omega(x, y) = \bra x_1, y_2 \ket_{\C_n} - \bra x_2, y_1 \ket_{\C^n}$, we have
    \[
    J = \begin{pmatrix}
    0_n & \bbI_n \\ - \bbI_n & 0_n
    \end{pmatrix}.
    \]
\end{example}

Later in the article, we will make the following Assumption A:
\begin{equation*} 
    \textbf{Assumption A:} \quad J^2 = - \bbI_{\cH_b}.
\end{equation*}
In this case, $J$ is bounded skew self-adjoint with $J^2 = - \bbI$, and we have
\begin{equation} \label{eq:decomposition_cHb}
\cH_b =  \Ker \left( J - \ri \right) \oplus  \Ker \left( J + \ri \right) .
\end{equation}
The hermitian form $- \ri \omega$ is positive definite on $\Ker \left( J - \ri \right)$ and negative definite on $\Ker \left( J + \ri \right)$. In addition, for all $x \in \Ker(J - \ri)$ and all $y \in \Ker(J + \ri)$, we have
\begin{equation} \label{eq:cross_terms_omega}
   \omega(x,x) = \ri \| x \|_{\cH_b}^2, \quad
    \omega(y,y)  = -\ri \| y \|_{\cH_b}^2, \quad \text{and} \quad
    \omega(x, y) = 0.
\end{equation}

The following result goes back to Leray in its seminar~\cite{leray1978analyse} (see also~\cite{booss1998maslov} and~\cite[Lemma 2 and 3]{booss2013maslov}). We skip its proof for the sake of brevity.
\begin{lemma} \label{lem:Lagrangian_unitaries_U}
    If Assumption A holds, then there is a one-to-one correspondence between the Lagrangian planes $\ell$ of $\cH_b$ and the unitaries $U$ from $\Ker \left( J - \ri \right)$ to $\Ker \left( J + \ri \right)$, with
    \[
    \ell =  \left\{  x + Ux, \quad x \in  \Ker \left( J - \ri \right)   \right\}.
    \]
\end{lemma}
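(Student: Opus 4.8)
The plan is to establish the bijection by constructing the map in both directions and checking they are mutually inverse.

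\medskip

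\noindent\textbf{Setup.} Write $\cH_b = \cH_+ \oplus \cH_-$ with $\cH_+ := \Ker(J-\ri)$ and $\cH_- := \Ker(J+\ri)$, which is the orthogonal decomposition~\eqref{eq:decomposition_cHb}. Given any $x \in \cH_+$ and the graph vector $x + Ux$ for a candidate unitary $U : \cH_+ \to \cH_-$, the key computational input is the behaviour of $\omega$ on the summands: $\omega$ restricted to $\cH_+$ equals $\ri \|\cdot\|^2$, restricted to $\cH_-$ equals $-\ri\|\cdot\|^2$, and the cross terms vanish by~\eqref{eq:cross_terms_omega}. First I would show that for a unitary $U$, the graph $\ell_U := \{ x + Ux : x \in \cH_+\}$ is Lagrangian. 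For $x + Ux$ and $y + Uy$ in $\ell_U$, expanding $\omega(x+Ux, y+Uy)$ using bilinearity and the three facts above gives $\omega(x,y) + \omega(Ux, Uy) = \ri\bra x, y\ket - \ri \bra Ux, Uy\ket$, which vanishes precisely because $U$ is an isometry. Hence $\ell_U$ is isotropic, $\ell_U \subset \ell_U^\circ$.

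\medskip

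\noindent\textbf{Maximality.} To upgrade isotropy to the Lagrangian condition $\ell_U = \ell_U^\circ$, I would verify that $\ell_U$ is maximal isotropic. The clean way is to exhibit a complementary Lagrangian and invoke Lemma~\ref{lem:useful_result_lagrangians}: since $U$ is surjective onto $\cH_-$, the graph of $-U$ (or any graph $\ell_{U'}$ with $U' \neq U$ unitary, e.g.\ $U' = -U$) is a second isotropic subspace, and one checks $\ell_U + \ell_{-U} = \cH_b$ because $x + Ux$ and $x - Ux$ together span $\cH_+ \oplus \cH_-$ as $x$ ranges over $\cH_+$. Lemma~\ref{lem:useful_result_lagrangians} then forces both to be Lagrangian. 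Alternatively, one argues directly: if $z = z_+ + z_- \in \ell_U^\circ$ (with $z_\pm \in \cH_\pm$), testing against all $x + Ux$ and using nondegeneracy of $\omega$ on each summand yields $z_- = U z_+$, so $z \in \ell_U$.

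\medskip

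\noindent\textbf{Inverse direction.} Conversely, given a Lagrangian plane $\ell$, I would recover $U$ by showing that the projections $P_\pm : \cH_b \to \cH_\pm$ restrict on $\ell$ to bijections $P_+|_\ell : \ell \to \cH_+$ and $P_-|_\ell : \ell \to \cH_-$, so that $U := (P_-|_\ell)(P_+|_\ell)^{-1}$ is well-defined; the isotropy of $\ell$ translates, via the same expansion of $\omega$, into the isometry property of $U$, and $\ell = \ell^\circ$ (maximality) gives surjectivity, hence $U$ unitary. The main obstacle is precisely proving that $P_+|_\ell$ is an isomorphism: injectivity follows since $\ell \cap \cH_- = \{0\}$ (any nonzero vector in $\cH_-$ has $\omega(y,y) = -\ri\|y\|^2 \neq 0$, contradicting isotropy), and surjectivity follows from the maximality of $\ell$ together with the sign-definiteness of $\pm\ri\omega$ on $\cH_\pm$. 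This is the step where Assumption~A ($J^2 = \bbI$, giving the honest orthogonal splitting and the definiteness of $\mp\ri\omega$) is essential; in its absence $\cH_\pm$ need not span $\cH_b$ and the projections may fail to be isomorphisms, as Example~\ref{ex:counterExample_symplectic_CN} illustrates. Finally I would note that the two constructions are mutually inverse by construction, completing the one-to-one correspondence.
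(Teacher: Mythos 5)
The paper itself does not prove this lemma: it attributes the result to Leray and to Boo\ss{}-Bavnbek--Furutani and explicitly skips the proof, so there is no in-paper argument to compare against. Judged on its own, your proposal follows the standard route (graphs of unitaries over the orthogonal splitting $\cH_b = \Ker(J-\ri)\oplus\Ker(J+\ri)$), and the forward direction is complete: isotropy of $\ell_U$ follows from the isometry of $U$ together with~\eqref{eq:cross_terms_omega}, and maximality follows either from Lemma~\ref{lem:useful_result_lagrangians} applied to the pair $(\ell_U,\ell_{-U})$ or from your direct computation showing $\ell_U^\circ\subset\ell_U$. (Two small remarks: you correctly work with $J^2=-\bbI_{\cH_b}$, whereas the displayed Assumption A in~\eqref{eq:weak_assumption_cHb} carries a sign typo that the surrounding text corrects; and your parenthetical claim that \emph{any} unitary $U'\neq U$ yields a complement is false in general --- one needs $U-U'$ invertible --- but your actual choice $U'=-U$ does work.)

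The one genuinely under-justified step is the surjectivity of $P_+|_\ell$ in the inverse direction, and this is exactly where infinite dimensionality bites (the lemma is later used with $\cH_b = L^2(\Gamma)\times L^2(\Gamma)$ via Appendix~\ref{appendixA}, so the point is not academic). The ingredients you invoke --- maximality of $\ell$ and sign-definiteness of $\mp\ri\omega$ on $\cH_\pm$ --- only give \emph{density} of $P_+(\ell)$ in $\cH_+$: if $u\in\cH_+$ is orthogonal to $P_+(\ell)$, then
\[
\forall z = z_+ + z_- \in \ell, \quad \omega(u,z) = \ri\bra u, z_+\ket_{\cH_b} = 0,
\]
so $u\in\ell^\circ=\ell$, hence $u\in\ell\cap\cH_+=\{0\}$ by definiteness. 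In a Hilbert space a dense proper subspace is possible, so this does not yet give $P_+(\ell)=\cH_+$. To close the gap, add that $P_+(\ell)$ is \emph{closed}: the plane $\ell=\ell^\circ$ is closed (polars are always closed), and isotropy gives $\|z_+\|=\|z_-\|$ for every $z=z_++z_-\in\ell$; applying this to differences shows that if $P_+z^n\to u$ then $(z^n)$ is Cauchy in $\ell$, so $z^n\to z\in\ell$ with $P_+z=u$. Density plus closedness yields surjectivity, the map $U:=(P_-|_\ell)(P_+|_\ell)^{-1}$ is then a surjective isometry, hence unitary, and the rest of your argument goes through as written.
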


\begin{corollary}
    If $\dim \Ker \left( J - \ri \right) \neq \dim \Ker \left(J + \ri \right)$, then there are no Lagrangian planes. This happens for instance for the symplectic space $(\C^n, \widetilde{\omega})$, with $\widetilde{\omega}(z, z') =  \ri \bra z, z' \ket_{\C^n}$ (see Example~\ref{ex:counterExample_symplectic_CN}), for which we have $Jz = \ri z$, so $\Ker(J - \ri) = \C^n$ while $\Ker(J + \ri ) = \{ 0 \}$.
\end{corollary}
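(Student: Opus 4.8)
The plan is to prove the corollary directly from Lemma~\ref{lem:Lagrangian_unitaries_U}, which is the natural and shortest route, and then verify the stated concrete example by a direct computation of the operator $J$.

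First I would observe that the corollary is essentially the contrapositive packaging of the correspondence in Lemma~\ref{lem:Lagrangian_unitaries_U}. That lemma asserts that, under Assumption~A, Lagrangian planes of $\cH_b$ are in bijection with unitaries $U : \Ker(J - \ri) \to \Ker(J + \ri)$. A unitary between two Hilbert spaces is in particular a linear isomorphism (it is invertible with inverse $U^*$), and a linear isomorphism can only exist between spaces of equal dimension. Hence if $\dim \Ker(J - \ri) \neq \dim \Ker(J + \ri)$, no unitary $U$ between them can exist, and the bijection of Lemma~\ref{lem:Lagrangian_unitaries_U} forces the set of Lagrangian planes to be empty. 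This is the entire content of the first assertion.

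The mild subtlety worth flagging is that Lemma~\ref{lem:Lagrangian_unitaries_U} is stated under Assumption~A, namely $J^2 = \bbI_{\cH_b}$ (read as $J^2 = -\bbI_{\cH_b}$ in light of the surrounding discussion, where $J = -J^*$ and $-\ri\omega$ is split into positive and negative definite parts on $\Ker(J \mp \ri)$). So to be fully rigorous one should check that the example $(\C^n, \widetilde\omega)$ does satisfy this assumption before invoking the lemma. I would therefore compute $J$ for $\widetilde\omega$ explicitly: from the defining relation $\widetilde\omega(x,y) = \bra x, Jy\ket_{\C^n}$ together with $\widetilde\omega(x,y) = \ri\bra x,y\ket_{\C^n}$, I read off $Jy = \ri y$, i.e. $J = \ri\,\bbI_{\C^n}$. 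Then $J^2 = -\bbI_{\C^n}$, so Assumption~A holds, and moreover $\Ker(J - \ri) = \C^n$ while $\Ker(J + \ri) = \{0\}$, whose dimensions are $n$ and $0$. These are unequal for $n \geq 1$, so the general statement applies and yields no Lagrangian planes, consistent with Example~\ref{ex:counterExample_symplectic_CN}.

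There is no real obstacle here: the proof is a two-line deduction (unitaries require equal dimensions, so the bijection gives the empty set) followed by a routine verification that the quoted example fits the hypothesis. If I wanted to be cautious about the finite-versus-infinite-dimensional distinction, I would note that for infinite-dimensional $\cH_b$ the phrase ``$\dim \Ker(J-\ri) \neq \dim \Ker(J+\ri)$'' should be understood in the sense of Hilbert-space dimension (cardinality of an orthonormal basis), and that the impossibility of a unitary between Hilbert spaces of different Hilbert dimensions is standard; this is the only point where I would take a moment to make the statement precise, but it does not change the argument.
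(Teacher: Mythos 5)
Your proposal is correct and follows the same route as the paper: the corollary is stated there as an immediate consequence of Lemma~\ref{lem:Lagrangian_unitaries_U} (no unitary can exist between eigenspaces of unequal dimension), with the example verified by reading off $J = \ri\,\bbI_{\C^n}$ exactly as you do. Your remark that Assumption~A should read $J^2 = -\bbI_{\cH_b}$ is a fair catch of a typo in the paper, and your handling of the infinite-dimensional case via Hilbert-space dimension is a harmless refinement of the same argument.
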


The next Lemma shows that the crossing of two Lagrangian planes can be read from their respective unitaries (see {\em e.g.} in~\cite[Lemma 2]{booss2013maslov}).

\begin{lemma} \label{lem:KerU1U2-1}
    Let $\ell_1$ and $\ell_2$ be two Lagrangian planes of $\Lambda(\cH_b)$, with corresponding unitaries $U_1$ and $U_2$ from $\Ker(J - \ri)$ to $\Ker(J + \ri)$. Then there is a natural isomorphism
    \[
        \Ker \left( U_2^* U_1 - \bbI_{\Ker(J - \ri )}  \right) \approx \ell_1 \cap \ell_2.
    \]
\end{lemma}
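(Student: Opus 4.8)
The plan is to use the explicit parametrisation of Lagrangian planes from Lemma~\ref{lem:Lagrangian_unitaries_U} together with the splitting $\cH_b = \Ker(J - \ri) \oplus \Ker(J + \ri)$ from~\eqref{eq:decomposition_cHb}, and to exhibit the isomorphism by an explicit formula. Writing $\cH_+ := \Ker(J - \ri)$ and $\cH_- := \Ker(J + \ri)$, Lemma~\ref{lem:Lagrangian_unitaries_U} gives $\ell_1 = \{ x + U_1 x : x \in \cH_+ \}$ and $\ell_2 = \{ y + U_2 y : y \in \cH_+ \}$, with $U_1, U_2 : \cH_+ \to \cH_-$ unitary. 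I would then propose the candidate map
\[
\Phi : \Ker\left( U_2^* U_1 - \bbI_{\cH_+} \right) \to \ell_1 \cap \ell_2, \qquad \Phi(x) = x + U_1 x,
\]
and check that it is a well-defined linear bijection.

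First I would record that the sum $\cH_b = \cH_+ \oplus \cH_-$ is direct: any $v \in \cH_+ \cap \cH_-$ satisfies $\ri v = J v = - \ri v$, hence $v = 0$. Consequently the decomposition of a vector into its $\cH_+$- and $\cH_-$-components is unique, and this uniqueness is the key mechanism of the proof. A vector lies in $\ell_1 \cap \ell_2$ if and only if it can be written both as $x + U_1 x$ and as $y + U_2 y$ with $x, y \in \cH_+$; since $x, y \in \cH_+$ and $U_1 x, U_2 y \in \cH_-$, matching the $\cH_+$-components forces $x = y$, and matching the $\cH_-$-components then forces $U_1 x = U_2 x$.

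Next I would translate the condition $U_1 x = U_2 x$ into membership in $\Ker(U_2^* U_1 - \bbI_{\cH_+})$. Applying $U_2^*$ and using $U_2^* U_2 = \bbI_{\cH_+}$ gives $U_2^* U_1 x = x$; conversely, applying $U_2$ to $U_2^* U_1 x = x$ and using $U_2 U_2^* = \bbI_{\cH_-}$ recovers $U_1 x = U_2 x$. This equivalence shows that $\Phi$ indeed lands in $\ell_1 \cap \ell_2$, and that every element of $\ell_1 \cap \ell_2$ equals $\Phi(x)$ for a (unique) $x$ in the kernel, so $\Phi$ is onto. Injectivity is immediate: $\Phi(x) = 0$ forces its $\cH_+$-component $x$ to vanish. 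Linearity being clear, $\Phi$ is the desired natural isomorphism.

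I do not anticipate a genuine obstacle here: the argument is essentially bookkeeping built on the unitary parametrisation and the uniqueness of the eigenspace decomposition. The only point requiring a little care is keeping track of which unitary carries the adjoint, so that $U_2^* U_1$ maps $\cH_+$ to itself, the kernel is taken on $\cH_+ = \Ker(J - \ri)$, and $\Phi$ is built from $U_1$; interchanging the roles of $\ell_1$ and $\ell_2$ would give the isomorphic but differently-indexed kernel $\Ker(U_1^* U_2 - \bbI_{\cH_+})$.
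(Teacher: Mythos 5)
Your proof is correct and follows essentially the same route as the paper: both use the parametrisation $\ell_i = \{x + U_i x : x \in \Ker(J - \ri)\}$ from Lemma~\ref{lem:Lagrangian_unitaries_U}, the direct-sum decomposition $\cH_b = \Ker(J - \ri) \oplus \Ker(J + \ri)$, and the explicit correspondence $x \mapsto x + U_1 x$. Your write-up is merely more detailed (checking directness of the sum, injectivity, and surjectivity explicitly) than the paper's two-line argument, which is fine.
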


\begin{proof}
    If $x^- \in \Ker\left( J - \ri  \right)$ is such that $U_2^* U_1 x^- = x^-$, then we have $U_1 x^- = U_2 x^-$ in $\Ker(J + \ri)$, so $x := x^- + U_1 x^- = x^- + U_2 x^-$ is in $\ell_1 \cap \ell_2$. Conversely, if $x \in \ell_1 \cap \ell_2$, then, writing $x = x^- + x^+$, we have $U_1 x^- = U_2 x^-$, so $U_2^* U_1 x^- = x^-$.
\end{proof}


\subsubsection{Another unitary}
\label{ssec:anotherUnitary}

In Section~\ref{ssec:Maslov} below, we will consider periodic paths of Lagrangians $\ell_1(t)$ and $\ell_2(t)$, and define the Maslov index of the pair $(\ell_1, \ell_2)$. When $\cH_b$ is finite dimensional, we will prove that it equals the winding number of the determinant of $U_2^*(t) U_1(t)$. Unfortunately, since $U_1$ and $U_2$ are not endomorphism, we cannot split $\det(U_2^* U_1)$ into $\det(U_1) / \det(U_2)$. In this section, we present another one-to-one correspondence between Lagrangian planes and other unitaries (which will be endomorphisms). The results of this section are new to the best of our knowledge.

\medskip

We now make the stronger assumption that $\cH_b$ is of the form $\cH_b = \cH_1 \times \cH_2$, where $\cH_1$ and $\cH_2$ are two complex Hilbert spaces, and that, relative to this decomposition, $J$ is of the form
\begin{equation*} 
    \textbf{Assumption B:} \quad J = \begin{pmatrix}
        0 & V^* \\
        - V & 0
    \end{pmatrix}, \quad \text{for some (fixed) unitary $V : \cH_1 \to \cH_2$}.
\end{equation*}
It implies $J^2 = - \bbI_{\cH_b}$, so {\bf B} is stronger assumption than {\bf A}. Furthermore, we can identify
\[
    \Ker( J - \ri ) = \begin{pmatrix}
        1 \\ \ri V
    \end{pmatrix} \cH_1, \quad \text{and} \quad
    \Ker(J + \ri) = \begin{pmatrix}
    1 \\ - \ri V
    \end{pmatrix} \cH_1.
\]
Defining the maps  $Q_\pm : \cH_1 \to \Ker(J \pm \ri)$ by
\[
    \forall x \in \cH_1, \quad Q_\pm (x) := \frac{1}{\sqrt{2}} \begin{pmatrix}
x \\ \mp \ri V x
\end{pmatrix},
\quad \text{with dual} \quad
Q_\pm^* \begin{pmatrix}
y_1 \\ y_2
\end{pmatrix} = \frac{1}{\sqrt{2}}(y_1 \pm \ri V^* y_2) ,
\]
we can check that $Q_\pm Q_\pm^* = \bbI_{\Ker(K \pm \ri)}$ and $Q_\pm^* Q_\pm = \bbI_{\cH_1}$, so $Q_\pm$ are unitaries. In particular, if $U$ is a unitary from $\Ker(J - \ri)$ to $\Ker(J + \ri)$, then 
\[
\cU := Q_+^* U Q_-
\]
is a unitary from $\cH_1$ to itself, hence an endomorphism. In what follows, we use straight letters $U$ for unitaries from $\Ker(J - \ri) \to \Ker(J + \ri)$, and curly letters $\cU$ for unitaries of $\cH_1$. We therefore proved the following.

\begin{lemma} \label{lem:Lagrangian_unitaries_cU}
    If Assumption B (hence A) holds, then there is a one-to-one correspondence between the Lagrangian planes $\ell$ of $( \cH_1 \times \cH_2, \omega)$ and the unitaries $\cU$ of $\cH_1$, with
    \[
        \ell = \left\{ \begin{pmatrix}
         1 \\ \ri V
        \end{pmatrix} x + \begin{pmatrix}
        1 \\ -\ri V
        \end{pmatrix} \cU x, \quad x \in \cH_1 \right\}.
    \]
    In addition, if $\ell_1$ and $\ell_2$ are two Lagrangian planes with corresponding unitaries $\cU_1$ and $\cU_2$, then there is a natural isomorphism
    \[
        \Ker \left( \cU_2^* \cU_1 - \bbI_{\cH_1}  \right) \approx \ell_1 \cap \ell_2.
    \]
\end{lemma}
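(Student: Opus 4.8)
The plan is to deduce both assertions from Lemma~\ref{lem:Lagrangian_unitaries_U} and Lemma~\ref{lem:KerU1U2-1} by transporting them along the unitaries $Q_-$ and $Q_+$. Since $Q_- : \cH_1 \to \Ker(J - \ri)$ and $Q_+ : \cH_1 \to \Ker(J + \ri)$ are unitary isomorphisms onto their respective kernels, the assignment $U \mapsto \cU := Q_+^* U Q_-$ is a bijection from the unitaries $\Ker(J - \ri) \to \Ker(J + \ri)$ onto the unitaries of $\cH_1$, with inverse $\cU \mapsto U = Q_+ \cU Q_-^*$ (here one uses $Q_+ Q_+^* = \bbI_{\Ker(J+\ri)}$, $Q_- Q_-^* = \bbI_{\Ker(J-\ri)}$ and $Q_\pm^* Q_\pm = \bbI_{\cH_1}$). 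Composing this bijection with the correspondence $\ell \leftrightarrow U$ of Lemma~\ref{lem:Lagrangian_unitaries_U} immediately yields the claimed one-to-one correspondence $\ell \leftrightarrow \cU$.

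To obtain the explicit parametrisation of $\ell$, I would start from $\ell = \{ x^- + U x^- : x^- \in \Ker(J - \ri) \}$ given by Lemma~\ref{lem:Lagrangian_unitaries_U} and substitute $x^- = Q_- x$ for $x \in \cH_1$, which is legitimate because $Q_-$ is onto. Then $x^- = \tfrac{1}{\sqrt 2} \begin{pmatrix} x \\ \ri V x \end{pmatrix}$ and $U x^- = U Q_- x = Q_+ \cU x = \tfrac{1}{\sqrt 2} \begin{pmatrix} \cU x \\ -\ri V \cU x \end{pmatrix}$. Summing and discarding the global factor $\tfrac{1}{\sqrt 2}$, which merely rescales the parameter $x$ without changing the span, produces exactly the stated set $\ell = \{ \begin{pmatrix} 1 \\ \ri V \end{pmatrix} x + \begin{pmatrix} 1 \\ -\ri V \end{pmatrix} \cU x : x \in \cH_1 \}$.

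For the isomorphism on kernels, the key step is a conjugation identity. Writing $U_i = Q_+ \cU_i Q_-^*$ and using $Q_+^* Q_+ = \bbI_{\cH_1}$ gives $U_2^* U_1 = Q_- \cU_2^* Q_+^* Q_+ \cU_1 Q_-^* = Q_- (\cU_2^* \cU_1) Q_-^*$, whence $U_2^* U_1 - \bbI_{\Ker(J - \ri)} = Q_- (\cU_2^* \cU_1 - \bbI_{\cH_1}) Q_-^*$. Since $Q_-$ is unitary, it restricts to an isomorphism $\Ker(\cU_2^* \cU_1 - \bbI_{\cH_1}) \to \Ker(U_2^* U_1 - \bbI_{\Ker(J - \ri)})$, and post-composing with the natural isomorphism of Lemma~\ref{lem:KerU1U2-1} gives $\Ker(\cU_2^* \cU_1 - \bbI_{\cH_1}) \approx \ell_1 \cap \ell_2$. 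Tracing the construction, a fixed vector $x$ of $\cU_2^* \cU_1$ corresponds to $\begin{pmatrix} 1 \\ \ri V \end{pmatrix} x + \begin{pmatrix} 1 \\ -\ri V \end{pmatrix} \cU_1 x \in \ell_1 \cap \ell_2$.

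I do not expect a genuine analytic obstacle: the entire statement is a transport of the two earlier lemmas through the unitaries $Q_\pm$. The only points demanding care are purely bookkeeping — keeping track of which space ($\cH_1$, $\Ker(J-\ri)$, or $\Ker(J+\ri)$) each operator acts on so that the composition $Q_+^* U Q_-$ and its inverse are well-defined, and verifying that the normalisation factor $\tfrac{1}{\sqrt 2}$ rescales the parameter but does not alter the linear span defining $\ell$.
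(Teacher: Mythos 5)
Your proposal is correct and follows essentially the same route as the paper: the paper also constructs the unitaries $Q_\pm$, observes that $\cU := Q_+^* U Q_-$ is a unitary endomorphism of $\cH_1$, and obtains the lemma by transporting Lemma~\ref{lem:Lagrangian_unitaries_U} and Lemma~\ref{lem:KerU1U2-1} through this bijection. Your additional verifications (the rescaling by $\tfrac{1}{\sqrt 2}$ leaving the span unchanged, and the conjugation identity $U_2^* U_1 = Q_-(\cU_2^*\cU_1)Q_-^*$) are exactly the bookkeeping the paper leaves implicit.
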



\subsection{Self-adjoint extensions of Hill's operators}

We now review some basic facts on self-adjoint operators (see {\em e.g.}~\cite[Chapter X.1]{reed1975fourier} for  a complete introduction). We first recall some general definitions, and then focus on second order elliptic operators. We show the connection with symplectic spaces using the second Green's identity.

\subsubsection{Self-adjoint operators}
Let $\cH$ be a separable Hilbert space, and let $A$ with dense domain $\cD_A$ be any operator on $\cH$. In the sequel, we sometime write $(A, \cD_A)$. The {\bf adjoint} of $(A, \cD_A)$ is denoted by $(A^*, \cD_{A^*})$. 

For $A$ a symmetric, hence closable, operator on $\cH$, we denote by $(A_{\rm min}, \cD_{\rm min})$ its closure. The adjoint of $(A_{\rm min}, \cD_{\rm min}) $ is denoted by $(A_{\rm max}, \cD_{\rm max})$. Since $A$ is symmetric, we have  $A_{\rm min} \subset A_{\rm max}$ ($A_{\rm max}$ is an extension of $A_{\rm min}$). The operator $A_{\rm min}$ is self-adjoint iff $\cD_{\rm min} = \cD_{\rm max}$. Otherwise, any self-adjoint extension of $A$ must be of the form $(\widetilde{A}, \widetilde{\cD})$ with
\[
    A_{\rm min} \subset \widetilde{A} \subset A_{\rm max}, \quad \text{in the sense} \quad \cD_{\rm min} \subset \widetilde{\cD} \subset \cD_{\rm max}.
\]
In particular, once $\cD_{\rm min}$ and $\cD_{\rm max}$ have been identified, the self-adjoint extensions are simply given by domains $\widetilde{\cD}$ with $\cD_{\rm min} \subset \widetilde{\cD} \subset \cD_{\rm max}$, and the operator $\widetilde{A}$ acts on this domain via
\[
    \forall x \in \widetilde{\cD}, \quad \widetilde{A} x := A_{\rm max} x.
\]
We sometime write $(A_{\rm max}, \widetilde{\cD})$ instead of $(\widetilde{A}, \widetilde{\cD})$ to insist that only the domain matters. 

There are several ways to identify the self-adjoint extensions of $A$. The original proof by von Neumann~\cite{neumann1930allgemeine} uses the Cayley transform. As noticed in~\cite[Chapter X.1]{reed1975fourier} following~\cite{dunford1965linear}, the connection with boundary values is not so clear in this approach. Another approach can be found {\em e.g.} in~\cite[Section 3.1]{booss1998maslov}, where the authors give a correspondence between the self-adjoint extensions of $A$ and the Lagrangian planes of the abstract space $\cD_{\rm max} / \cD_{\rm min}$, with the symplectic form
\[
    \forall [x], [y] \in \cD_{\rm max} / \cD_{\rm min}, \quad \omega \left([x], [y]\right) := \bra x, A_{\rm max} y \ket_{\cH} - \bra A_{\rm max} x, y \ket_\cH.
\] 
Again, the connection with boundary conditions is not so clear in this setting.

Here, we follow~\cite{latushkin2018maslov} (see also~\cite{cox2015morse}), which is specific to second order elliptic operators. It uses the second Green's identity.

\subsubsection{Self-adjoint extensions of Hill's operators on the semi line}
\label{ssec:sa_Hill}
We first present the theory in the case where $A = h$ is a second order ODE (Hill's operator). We postpone the analysis for general second order elliptic operator to Section~\ref{sec:Schrodinger} below.

Let $n \in \N$ and let $V: \R \to \cS_n$ be a bounded potential with values in $\cS_n$, the set of $n \times n$ hermitian matrices. We consider the Hill's operator
\[
    h := - \partial_{xx}^2 + V(x) \quad \text{acting on} \quad \cH := L^2(\R, \C^n).
\]
The bulk operator $h$ with core domain $C^\infty_0(\R, \C^n)$ is symmetric. Since the potential $V$ is bounded, the operator $h$ is essentially self-adjoint, with domain (see~\cite[Chapter 4]{kato2013perturbation})
\[
    \cD := \cD_{\rm min} = \cD_{\rm max} = H^2(\R, \C^n).
\]

When restricting this operator to the half line, we obtain the edge operator 
\[
    h^\sharp := - \partial_{xx}^2 + V(x) \quad \text{acting on} \quad  \cH^\sharp := L^2(\R^+, \C^n).
\]
On the core $C^\infty_0(\R^+, \C^n)$, it is symmetric, and its closure has domain 
\[
    \cD_{\rm min}^\sharp := H^2_0(\R^+, \C^n).
\]
The adjoint of $(h^\sharp_{\rm min}, \cD_{\rm min}^\sharp)$ is the operator $(h^\sharp_{\rm max}, \cD_{\rm max}^\sharp)$ where $h_{\rm max}^\sharp := - \partial_{xx}^2 + V(x)$ has domain
\[
    \cD_{\rm max}^\sharp := H^2(\R^+, \C^n).
\]
We have $\cD_{\rm min}^\sharp \subsetneq \cD_{\rm max}^\sharp$, so $h^\sharp$ is not essentially self-adjoint. This reflects the fact that some boundary conditions must be chosen at $x = 0$.
The particularity of second order elliptic operators comes from the second Green's identity. 

\begin{lemma}[second Green's identity]\label{lem:second_Green}
    For all $\phi, \psi \in \cD_{\rm max}^\sharp$,
    \[
        \bra \phi, h_{\rm max}^\sharp \psi \ket_{\cH^\sharp} -  \bra h_{\rm max}^\sharp \phi,  \psi \ket_{\cH^\sharp} = \bra \phi(0), \psi'(0) \ket_{\C^n} - \bra \phi'(0), \psi(0) \ket_{\C^n}.
    \]
\end{lemma}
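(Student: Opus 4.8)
The plan is to split $h_{\rm max}^\sharp = -\partial_{xx}^2 + V$ into its kinetic and potential parts and treat each separately. For the potential part, since $V(x) \in \cS_n$ is hermitian for almost every $x$, we have $\bra \phi(x), V(x)\psi(x)\ket_{\C^n} = \bra V(x)\phi(x), \psi(x)\ket_{\C^n}$ pointwise, so the contribution of $V$ to $\bra\phi, h_{\rm max}^\sharp\psi\ket_{\cH^\sharp} - \bra h_{\rm max}^\sharp\phi, \psi\ket_{\cH^\sharp}$ cancels after integration. It therefore suffices to establish the purely kinetic identity
\[
\int_0^\infty \left( \bra \phi''(x), \psi(x)\ket_{\C^n} - \bra \phi(x), \psi''(x)\ket_{\C^n}\right)\, dx = \bra\phi(0), \psi'(0)\ket_{\C^n} - \bra\phi'(0), \psi(0)\ket_{\C^n}.
\]

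The next step is to introduce the Wronskian-type quantity $W(x) := \bra\phi'(x), \psi(x)\ket_{\C^n} - \bra\phi(x), \psi'(x)\ket_{\C^n}$. Since $\phi, \psi \in H^2(\R^+, \C^n)$, we have $\phi', \psi' \in H^1(\R^+, \C^n)$, so $W \in W^{1,1}(\R^+)$ and in particular admits an absolutely continuous representative. A direct computation using the conjugate-linearity of $\bra\cdot, \cdot\ket_{\C^n}$ in its first argument gives, for the weak derivative, $W'(x) = \bra\phi''(x), \psi(x)\ket_{\C^n} - \bra\phi(x), \psi''(x)\ket_{\C^n}$, the two cross terms $\pm\bra\phi'(x), \psi'(x)\ket_{\C^n}$ cancelling. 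The fundamental theorem of calculus for absolutely continuous functions then yields $\int_0^\infty W'(x)\, dx = \lim_{x\to\infty} W(x) - W(0)$, and the right-hand side of the displayed kinetic identity is exactly $-W(0) = \bra\phi(0), \psi'(0)\ket_{\C^n} - \bra\phi'(0), \psi(0)\ket_{\C^n}$, provided the boundary term at infinity vanishes.

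The main obstacle is the two boundary regularity/decay issues, both resolved by Sobolev embedding on the half-line rather than by a naive $C^2$ integration by parts. First, the pointwise traces $\phi(0), \phi'(0), \psi(0), \psi'(0)$ in the statement are well-defined because $H^2(\R^+, \C^n) \hookrightarrow C^1([0, \infty), \C^n)$, so $W(0)$ makes sense. Second, to see that $\lim_{x\to\infty} W(x) = 0$, recall that $H^1(\R^+) \hookrightarrow C_0([0,\infty))$, the continuous functions vanishing at infinity; applying this to $\phi, \psi \in H^1$ and to $\phi', \psi' \in H^1$ (which holds since $\phi, \psi \in H^2$) shows each of the four factors tends to $0$ at infinity, hence $W(x) \to 0$.

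For rigor I would justify the computation of $W'$ and the fundamental theorem of calculus either directly from the theory of Sobolev functions on an interval (absolutely continuous representatives and the product rule in $W^{1,1}$), or by a density argument: approximate $\phi, \psi$ in the $H^2(\R^+, \C^n)$ norm by functions in $C^\infty_c([0,\infty), \C^n)$, for which the identity is the classical integration by parts, and pass to the limit using the continuity of the trace map $\phi \mapsto (\phi(0), \phi'(0))$ from $H^2(\R^+, \C^n)$ to $\C^n \times \C^n$ together with the continuity of the $L^2$ pairing. I expect this density/trace-continuity step to be the only genuinely technical point; the algebraic cancellations (of the potential term and of the cross terms in $W'$) are immediate.
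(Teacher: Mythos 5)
Your proof is correct. The paper states this lemma without proof (it is the classical second Green's identity), and your argument — splitting off the potential term via pointwise hermiticity of $V(x)$, writing the kinetic part as the integral of the derivative of the Wronskian $W(x) = \bra \phi'(x), \psi(x)\ket_{\C^n} - \bra \phi(x), \psi'(x)\ket_{\C^n}$, and killing the boundary term at infinity via $H^1(\R^+) \hookrightarrow C_0([0,\infty))$ — is precisely the standard argument the paper implicitly relies on, with the technical points (traces via $H^2 \hookrightarrow C^1$, the $W^{1,1}$ product rule or density of smooth compactly supported functions) correctly identified and resolved.
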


This suggests to introduce the boundary space
\[
\cH_b := \C^n \times \C^n
\] 
with its canonical symplectic form $\omega$ defined in Example~\ref{ex:symplectic_C2n}. We also introduce the map $\Tr : \cD_{\rm max}^\sharp \to \cH_b$ defined by
\begin{equation} \label{eq:def:Tr}
    \forall \phi \in \cD_{\rm max}^\sharp, \quad \Tr(\phi) := (\phi(0), \phi'(0)) \in \cH_b.
\end{equation}
With these notations, the second Green's identity reads
\[
    \forall \phi, \psi \in \cD^\sharp_{\rm max}, \quad \bra \phi, h_{\rm max}^\sharp \psi \ket_{\cH^\sharp} -  \bra h_{\rm max}^\sharp \phi,  \psi \ket_{\cH^\sharp} =
    \omega \left( \Tr(\phi), \Tr(\psi) \right).
\]

\medskip

We denote by $\| \cdot \|_\sharp$ the graph norm of $h^\sharp$, that is
\[
    \forall \phi \in \cD_{\rm max}^\sharp, \quad \| \phi \|_{\sharp}^2 := \| \phi \|_{\cH^\sharp}^2 + \| h^\sharp_{\rm max} \phi \|_{\cH^\sharp}^2.
\]
In the one-dimensional Hill setting, the graph norm is equivalent to the $H^2$-norm. Recall that a closed extension of $h^\sharp$ has a domain which is closed for this norm.

\begin{lemma} \label{lem:Tr_is_onto_finite_dim}
    The map $\Tr : (\cD_{\rm max}^\sharp, \| \cdot \|_{\sharp}) \to \cH_b$ is well-defined, continuous and onto.
\end{lemma}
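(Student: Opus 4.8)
The plan is to prove the three assertions separately, but all reduce to understanding the trace map $\Tr(\phi) = (\phi(0), \phi'(0))$ on $\cD_{\rm max}^\sharp = H^2(\R^+, \C^n)$ equipped with the graph norm. The key preliminary observation is that the graph norm $\| \cdot \|_\sharp$ is equivalent to the standard $H^2(\R^+, \C^n)$ norm. Indeed, since $V$ is bounded, we have $\| h^\sharp_{\rm max} \phi \|_{\cH^\sharp} \le \| \phi'' \|_{\cH^\sharp} + \| V \|_\infty \| \phi \|_{\cH^\sharp}$, giving one inequality immediately. For the reverse, I would use elliptic regularity / interpolation: the bound $\| \phi'' \|_{\cH^\sharp} \le \| h^\sharp_{\rm max}\phi \|_{\cH^\sharp} + \| V \|_\infty \| \phi \|_{\cH^\sharp}$ controls the top-order term, and the intermediate derivative $\| \phi' \|$ is controlled by $\| \phi \|$ and $\| \phi'' \|$ via the standard interpolation inequality on the half-line (Ehrling/Gagliardo--Nirenberg). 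Hence $(\cD_{\rm max}^\sharp, \| \cdot \|_\sharp)$ and $H^2(\R^+, \C^n)$ are the same Banach space with equivalent norms, and it suffices to prove the statement for $\Tr$ on $H^2(\R^+, \C^n)$.

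Once the norms are identified, \textbf{well-definedness and continuity} follow from the standard Sobolev trace theorem: for $\phi \in H^2(\R^+, \C^n)$ one has $\phi \in C^1(\overline{\R^+}, \C^n)$ with the pointwise bounds $|\phi(0)| \le C \| \phi \|_{H^2}$ and $|\phi'(0)| \le C \| \phi \|_{H^2}$. I would make this quantitative by the elementary identity $|\phi(0)|^2 = - \int_0^\infty \partial_x |\phi(x)|^2 \, \rd x = - 2\Re \int_0^\infty \bra \phi'(x), \phi(x) \ket \, \rd x \le 2 \| \phi' \|_{\cH^\sharp} \| \phi \|_{\cH^\sharp}$, and similarly $|\phi'(0)|^2 \le 2 \| \phi'' \|_{\cH^\sharp} \| \phi' \|_{\cH^\sharp}$, both of which are bounded by $C \| \phi \|_{H^2}^2$ after applying the interpolation bound on $\| \phi' \|$. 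This shows $\Tr$ maps into $\cH_b = \C^n \times \C^n$ continuously.

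For \textbf{surjectivity}, given an arbitrary target $(a, b) \in \C^n \times \C^n$, I would exhibit an explicit $\phi \in H^2(\R^+, \C^n)$ with $\phi(0) = a$ and $\phi'(0) = b$. The cleanest choice is $\phi(x) := (a + (b + a)x)\re^{-x}$, or more simply a fixed smooth cutoff construction: take $\chi \in C^\infty_0(\R^+)$ with $\chi \equiv 1$ near $0$ and set $\phi(x) := (a + bx)\chi(x)$, which is compactly supported, hence in $H^2$, and satisfies $\phi(0) = a$, $\phi'(0) = b$. This realizes every element of $\cH_b$ in the image, establishing that $\Tr$ is onto.

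I do not expect any serious obstacle here, as the finite-dimensionality of $\cH_b$ makes everything elementary. The only point requiring a little care is the norm-equivalence step, since the continuity claim is stated with respect to the graph norm rather than the $H^2$ norm; controlling the intermediate derivative $\| \phi' \|$ via interpolation is the one non-trivial ingredient, but it is a standard fact. (This contrasts sharply with the Schrödinger case treated later, where the boundary space $\cH_b = H^{3/2}(\Gamma) \times H^{1/2}(\Gamma)$ is infinite-dimensional and the analogous surjectivity statement is genuinely delicate — which is presumably why the author relegates that discussion to Appendix~\ref{appendixA}.)
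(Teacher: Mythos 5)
Your proof is correct and follows essentially the same route as the paper: norm equivalence of $\| \cdot \|_\sharp$ with the $H^2(\R^+,\C^n)$ norm, the Sobolev embedding $H^2 \hookrightarrow C^1$ for well-definedness and continuity, and an explicit compactly supported function $(a+bx)\chi(x)$ (the paper's $uC(x)+u'S(x)$ in disguise) for surjectivity; your added interpolation step and quantitative trace bounds just make explicit what the paper leaves implicit. The only quibble is notational: a function in $C^\infty_0(\R^+)$ with $\R^+$ open cannot equal $1$ near $0$, so you should say $\chi$ is smooth on $[0,\infty)$ with compact support, as the paper does.
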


\begin{proof}
    Since $V$ is bounded, the graph norm  $\| \cdot  \|_{\sharp}$ is equivalent to the usual $H^2(\R^+, \C^n)$ norm on $\cD^\sharp_{\rm max} = H^2(\R^+, \C^n)$. Rellich embedding shows that $H^2(\R^+, \C^n) \hookrightarrow C^1([0, \infty), \C^n)$ with continuous embedding. This implies that $\Tr$ is a bounded linear operator. Let $C, S \in C^\infty(\R^+, \R)$ be two compactly supported smooth functions with $C(0) = S'(0) = 1$ and $C'(0) = S(0) = 0$. Given an element $(u, u') \in \cH_b$, we have $(u, u') = \Tr(\psi)$ for $\psi(x) := u C(x) + u' S(x) \in \cD_{\rm max}^\sharp$, so $\Tr$ is onto.
\end{proof}

The next result shows that the self-adjoint extensions of $h^\sharp$ can be seen as Lagrangian planes of $\cH_b$.

\begin{theorem}\label{th:self_adjoint_extensions_are_Lagrangian_planes}
    Let $\cD^\sharp$ be a domain satisfying $\cD_{\rm min}^\sharp \subset \cD^\sharp \subset \cD_{\rm max}^\sharp$, and let $\ell := \Tr( \cD^\sharp)$. The adjoint domain of $(h^\sharp_{\rm max}, \cD^\sharp)$ satisfies $(\cD^\sharp)^* = \Tr^{-1} \left(\ell^\circ \right)$. \\
    In particular, $(h^\sharp_{\rm max}, \cD^\sharp)$ is a self-adjoint extension of $h^\sharp$ iff 
    \[
        \exists \ell \in \Lambda(\cH_b) \quad \text{so that} \quad \cD^\sharp = \Tr^{-1} (\ell).
    \]
\end{theorem}

\begin{proof}
    Since $\cD_{\rm min}^\sharp \subset \cD^\sharp \subset \cD_{\rm max}^\sharp$ and $(\cD_{\rm max}^\sharp)^* = \cD_{\rm min}^\sharp$, we have $\cD_{\rm min}^\sharp \subset \left(\cD^\sharp \right)^* \subset \cD_{\rm max}^\sharp$ as well.
    Let $\psi_0 \in (\cD^\sharp)^* \subset \cD_{\rm max}^\sharp$ . By definition of the adjoint, and the second Green's identity, we have
    \[
    \forall \phi \in \cD^\sharp, \quad 0 = \bra \psi_0, h^\sharp_{\rm max} \phi \ket_{\cH^\sharp} - \bra h^\sharp_{\rm max} \psi_0, \phi \ket_{\cH^\sharp} = \omega (\Tr(\psi_0),  \Tr(\phi)).
    \]
    We deduce that $\Tr(\psi_0) \in \ell^\circ$.  So $\Tr((\cD^\sharp)^*) \subset \ell^\circ$, which implies $(\cD^\sharp)^* \subset \Tr^{-1} (\ell^\circ)$.
    
    Conversely, let $\psi_0 \in \Tr^{-1}(\ell^\circ)$. By definition of $\ell^\circ$ and the second Green's identity, we get
    \[
    \forall \phi \in \cD^\sharp, \quad 
    0 = \omega \left( \Tr(\psi_0), \Tr(\phi) \right) = \bra \psi_0, h_{\rm max}^\sharp \phi \ket_{\cH^\sharp} - \bra h_{\rm max}^\sharp \psi_0, \phi \ket_{\cH^\sharp}.
    \]
    In particular, the map $T_{\psi_0} : \cD^\sharp \to \C$ defined by 
    \[
    T_{\psi_0} : \phi \mapsto \bra \psi_0, h_{\rm max}^\sharp \phi \ket_{\cH^\sharp} = \bra h_{\rm max}^\sharp \psi_0, \phi \ket_{\cH^\sharp}
    \]
    is bounded on $\cD^\sharp$ with $\| T_{\psi_0} \phi \|_{\cH^\sharp} \le \| h^\sharp_{\rm max} \psi_0 \|_{\cH^\sharp} \| \phi \|_{\cH^\sharp}$. So $\psi_0$ is in the adjoint domain $(\cD^\sharp)^*$. This proves as wanted that $\Tr^{-1}(\ell^\circ) \subset (\cD^\sharp)^*$, and finally $\Tr^{-1}(\ell^\circ) = (\cD^\sharp)^*$.
    
    Since $\Tr$ is onto, we have $\Tr \left( \Tr^{-1} (A) \right)$ for all $A \subset \cH_b$. On the other hand, if $\cD^\sharp$ defines a self-adjoint extension, then we have
    \[
    \cD^\sharp = \Tr^{-1} (\ell^\circ), \quad \text{with} \quad \ell := \Tr(\cD^\sharp).
    \]
    We deduce that $\ell = \Tr(\cD^\sharp) = \Tr \left( \Tr^{-1} (\ell^\circ) \right) = \ell^\circ$, hence $\ell$ is Lagrangian. Conversely, if $\ell$ is Lagrangian, we can define the domain $\cD^\sharp := \Tr^{-1}(\ell)$. We then have $\Tr(\cD^\sharp) = \Tr \left(  \Tr^{-1}(\ell) \right) = \ell$ by surjectivity of $\Tr$ again. In particular, the dual domain satisfies $(\cD^\sharp)^* = \Tr^{-1}(\ell^\circ) = \Tr^{-1}(\ell)  = \cD^\sharp$, so $(H^\sharp_{\rm max}, \cD^\sharp)$ is a self-adjoint extension. This concludes the proof.
\end{proof}

In what follows, we denote by $\left( h^\sharp, \ell^\sharp \right)$ the self-adjoint extensions of $h^\sharp$ with domain $\Tr^{-1}(\ell^\sharp)$.

Before we go on, let us give some examples of Lagrangian planes and their corresponding unitaries $\cU$ for some usual self-adjoint extensions. In the Hill's case, we have $\cH_b = \cH_1 \times \cH_2$ with $\cH_1 = \cH_2 = \C^n$, with the canonical symplectic form. In particular, the unitary $V$ in Assumption B is $V = \bbI_n$, and the unitaries $\cU : \C^n \to \C^n$ can be seen as elements of $\U(n)$.

\begin{example}[Dirichlet and Neumann boundary conditions] \label{example:Dirichlet}
    The Dirichlet extension of $h^\sharp$ corresponds to the Lagrangian plane $\ell_D := \{0 \} \times \C^n$, and the Neumann one corresponds to $\ell_N := \C^n \times \{ 0 \}$. To identify the corresponding unitary, we note that $(0, u') \in \ell_D$ can be written as
    \[
        \begin{pmatrix} 0 \\ u' \end{pmatrix}
        = 
        \begin{pmatrix} 1 \\ \ri \end{pmatrix} (-  \tfrac{\ri}2 u')
        +
        \begin{pmatrix} 1 \\ -\ri \end{pmatrix} ( \tfrac{\ri}2 u').
    \]
    Comparing with Lemma~\ref{lem:Lagrangian_unitaries_cU}, this gives the unitary $\cU_D := - \bbI_n \in \U(n)$. The proof for Neumann boundary conditions is similar, and we find $\cU_N := \bbI_n \in \U(n)$.
\end{example}

\begin{example}[Robin boundary conditions] \label{ex:Robin}
    Consider $\Theta$ and $\Pi$ two hermitian $n \times n$ matrices so that
    \[
        \Theta^*  = \Theta, \ \Pi^* =  \Pi, \ \Theta \Pi = \Pi \Theta, \quad \Theta^2 + \Pi^2 \quad \text{is invertible}.
    \]
    Let $\ell_{\Theta, \Pi}$ be the subspace
    \[
        \ell_{\Theta, \Pi} := \left\{ (\Theta x, \Pi x), \ x \in \C^n \right\} \subset \cH_b.
    \]
    We claim that $\ell_{\Theta, \Pi}$ is Lagrangian. Indeed, first we have
    \[
        \omega( (\Theta x, \Pi x), (\Theta y, \Pi y) ) =
        \bra \Theta x, \Pi y \ket_{\C^n} - \bra \Pi x, \Theta y \ket_{\C^n} = 
        \bra x, (\Theta \Pi - \Pi \Theta) y \ket = 0,
    \]
    so $\ell_{\Theta, \Pi} \subset \ell_{\Theta, \Pi}^\circ$. On the other hand, let $(z, z') \in \ell_{\Theta, \Pi}^\circ$. We have
    \[
       \forall x \in \C^n, \quad \bra z, \Pi x \ket_{\C^n} = \bra z', \Theta x \ket_{\C^n}, \quad \text{so} \quad
        \bra \Pi z - \Theta z', x \ket_{\C^n} = 0.
    \]
    We deduce that $\Pi z = \Theta z'$. In particular, setting $ z_0 = (\Theta^2 + \Pi^2)^{-1}  ( \Theta z + \Pi z')$, we have $z = \Theta z_0$ and $z' = \Pi z_0$, so $(z, z') = (\Theta z_0, \Pi z_0) \in \ell_{\Theta, \Pi}$. This proves that $\ell_{\Theta, \Pi}$ is Lagrangian. This also proves that
    \[
        \ell_{\Theta, \Pi} = \left\{ (z,z') \in \C^n \times \C^n, \quad \Pi z = \Theta z' \right\}.
    \]
    We say that the corresponding self-adjoint extension has the $(\Theta, \Pi)$-Robin boundary condition, namely $\psi \in H^2(\R^+, \C^n)$ is in the domain if
    \[
        \Pi \psi(0) =  \Theta \psi'(0).
    \]
    To identify the corresponding unitary, we remark that
    \[
        \begin{pmatrix} \Theta x \\ \Pi x \end{pmatrix}
        = 
        \begin{pmatrix}  1 \\ \ri \end{pmatrix} \tfrac12 \left( \Theta- \ri \Pi \right) x
        +
         \begin{pmatrix}  1 \\ -\ri \end{pmatrix} \tfrac12 \left( \Theta + \ri \Pi \right) x.
    \]
    Comparing with Lemma~\ref{lem:Lagrangian_unitaries_cU}, we recognise the unitary
    \[
        \cU_{\Theta, \Pi} := (\Theta + \ri \Pi) (\Theta - \ri \Pi)^{-1} \quad \in \U(n).
    \]
    Note that $A := (\Theta - \ri \Pi)$ is invertible, since $A^* A = \Theta^2 + \Pi^2$ is invertible. We recover Dirichlet boundary condition with the pair $(\Theta, \Pi) = (0, \bbI_n)$ and Neumann boundary condition with $(\Theta, \Pi) = (\bbI_n, 0)$.
\end{example}


\subsection{The Lagrangian planes $\ell^\pm(E)$}

In the previous section, we linked the boundary conditions at $x = 0$ with the Lagrangian planes of the boundary space $\cH_b$. We now focus on the Cauchy solutions of $H \psi = E \psi$. Since we are also interested in the behaviour at $-\infty$, we introduce $\cH^{\sharp, \pm} := L^2(\R^\pm)$ and the maximal domains
\[
    \cD^{\sharp, \pm}_{\rm max}:= H^2(\R^\pm, \C^n).
\]
The space $\cD^\sharp_{\rm max}$ considered previously corresponds to $\cD^{\sharp, +}_{\rm max}$. We also denote by $\Tr^\pm : \cD^{\sharp, \pm}_{\rm max} \to \cH_b$ the corresponding boundary trace operator 
\[
    \forall \psi \in \cD^{\sharp, \pm}_{\rm max}, \quad \Tr^\pm(\psi) = (\psi(0), \psi'(0)).
\]
Note that, due to the orientation of the line $\R$, the second Green's identity on the left-side reads
\begin{equation} \label{eq:left_Green}
    \forall \phi, \psi \in \cD^{\sharp, -}_{\rm max}, \quad
    \bra \phi, h_{\rm max}^{\sharp, -} \psi \ket_{\cH^{\sharp, -}} -  
    \bra h_{\rm max}^{\sharp, -}  \phi, \psi \ket_{\cH^{\sharp, -}}
    = - \omega \left( \Tr^-(\phi), \Tr^-(\psi) \right).
\end{equation}

We now set
\[
    \cS^\pm(E) := \Ker \left( h_{\rm max}^{\sharp, \pm} - E \right) = \left\{ \psi \in \cD^{\sharp, \pm}_{\rm max}, \quad -\psi'' + V \psi = E \psi \right\},
\]
and
\begin{equation} \label{eq:def:ell^pm(E)}
    \ell^\pm(E) := \left\{ \Tr^\pm(\psi), \quad \psi \in \cS^\pm(E)   \right\} \quad \subset \cH_b.
\end{equation}
The solutions in $\cS^\pm(E)$ can be seen as the set of Cauchy solutions which are square integrable at $\pm \infty$. Thanks to Cauchy's theory for ODEs, elements $\psi^\pm$ of $\cS^\pm(E)$ can be reconstructed from their boundary values $\Tr^\pm \left( \psi^\pm \right) \in \cH_b$. 


\begin{lemma} \label{lem:E_eigenvalue_of_h}
    For the bulk operator $h$, we have that for all $E \in \R$,
    \[
        \dim \Ker \left(h - E\right) = \dim \left( \ell^+(E) \cap \ell^-(E) \right).
    \]
    In particular, $E$ is an eigenvalue of $h$ iff $\ell^+(E) \cap \ell^-(E) \neq \{ 0 \}$.
\end{lemma}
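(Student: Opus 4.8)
The plan is to exhibit an explicit linear isomorphism between $\Ker(h - E)$ and $\ell^+(E) \cap \ell^-(E)$, realised by the Cauchy data map. Concretely, I would define
\[
    \Phi : \Ker(h - E) \to \cH_b, \qquad \Phi(\psi) := \left( \psi(0), \psi'(0) \right),
\]
where $\Ker(h - E) = \{ \psi \in \cD, \ -\psi'' + V \psi = E \psi \}$ and $\cD = H^2(\R, \C^n)$. This map is clearly linear, and the whole lemma reduces to the claim that $\Phi$ is injective with range exactly $\ell^+(E) \cap \ell^-(E)$; the dimension equality then follows immediately, in the finite- as well as infinite-dimensional setting.

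For the range inclusion $\Ran(\Phi) \subset \ell^+(E) \cap \ell^-(E)$, I would take $\psi \in \Ker(h - E)$ and apply Lemma~\ref{lem:restriction}: since $\psi \in \cD$, the restrictions $\1_{\R^\pm} \psi$ belong to $\cD^{\sharp, \pm}_{\rm max}$, and they obviously solve $-\psi'' + V\psi = E\psi$ on $\R^\pm$, hence $\1_{\R^\pm} \psi \in \cS^\pm(E)$. The same lemma gives $\Tr^-(\1_{\R^-}\psi) = \Tr^+(\1_{\R^+}\psi) = (\psi(0),\psi'(0))$, so $\Phi(\psi) \in \ell^+(E) \cap \ell^-(E)$. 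Injectivity is the ODE uniqueness statement: if $\Phi(\psi) = 0$, then $\psi$ solves the linear second-order Cauchy problem $\psi'' = (V - E)\psi$ with bounded coefficients and vanishing data $\psi(0) = \psi'(0) = 0$, so $\psi \equiv 0$. (The same uniqueness shows $\Tr^\pm$ is injective on $\cS^\pm(E)$, so that each point of $\ell^\pm(E)$ lifts to a \emph{unique} Cauchy solution — a fact I use in the next step.)

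For surjectivity, given $(u, u') \in \ell^+(E) \cap \ell^-(E)$, I would pick the unique $\psi^\pm \in \cS^\pm(E)$ with $\Tr^\pm(\psi^\pm) = (u, u')$ and form the glued function $\psi := \1_{\R^-}\psi^- + \1_{\R^+}\psi^+$. The key point — and the main obstacle — is to check that $\psi \in \cD = H^2(\R)$ and that it solves the equation across the cut $x = 0$. Since $H^2(\R^\pm) \hookrightarrow C^1$, the one-sided limits of $\psi$ and $\psi'$ at $0$ are $u$ and $u'$ from both sides; because these values match, the distributional first derivative of $\psi$ is the $L^2$ function obtained by gluing $(\psi^\pm)'$, and the matching of the \emph{first} derivatives is exactly what kills any $\delta$-contribution in the distributional second derivative. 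Hence $\psi'' \in L^2(\R)$ with no singular part at $0$, so $\psi \in H^2(\R)$ and $-\psi'' + V\psi = E\psi$ holds weakly on all of $\R$, giving $\psi \in \Ker(h - E)$ with $\Phi(\psi) = (u, u')$. Verifying this gluing/regularity step carefully (it is the converse direction of Lemma~\ref{lem:restriction}) is the only delicate part; everything else is formal.
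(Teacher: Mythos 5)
Your proposal is correct for the Hill's case as stated, but it takes a genuinely different route at the one point where the argument is delicate, and it is in fact the ``simple proof using Cauchy's theorem'' that the paper explicitly mentions and then deliberately avoids. The forward inclusion (via Lemma~\ref{lem:restriction}) is identical in both proofs. The difference is the converse, i.e.\ the regularity of the glued function $\psi = \1_{\R^-}\psi^- + \1_{\R^+}\psi^+$: you prove $\psi \in H^2(\R,\C^n)$ by a one-dimensional distributional computation, using $H^2(\R^\pm) \hookrightarrow C^1$ and the fact that matching of $\psi$ and $\psi'$ at $0$ kills the $\delta_0$ and no singular part survives in $\psi''$; the paper instead shows, using the second Green's identity on each half-line (the boundary terms cancel precisely because $\Tr^+(\psi^+) = \Tr^-(\psi^-)$), that $f \mapsto \bra \psi, (h-E) f \ket_{\cH}$ is bounded on $\cD$, so that $\psi \in \cD^* = \cD$ by essential self-adjointness. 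What each buys: your argument is elementary and self-contained, and it makes injectivity of the Cauchy-data map explicit (Grönwall/Picard uniqueness), a point the paper leaves implicit; but both your regularity step and your injectivity step are intrinsically ODE facts, whereas the paper's proof uses only that $\Tr$ is onto, Green's identity, and $\cD^* = \cD$, so it transfers verbatim to the Schrödinger setting where the same statement is reused (third bullet of Theorem~\ref{th:everything_Schrodinger}) and where there is no Cauchy theory and no embedding of the domain into $C^1$. If you tried to push your $\delta$-killing computation to the tube $\Omega = \R \times \Gamma$, the natural way to do it is integration by parts against test functions --- at which point it becomes the paper's Green's-identity argument; and your injectivity argument would need to be replaced by something else entirely (e.g.\ unique continuation), which is exactly the step the duality proof packages away.
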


\begin{proof}
    We will provide a general proof later (see the proof of Lemma~\ref{lem:E_eigenvalue_of_H}), which works in the Schrödinger case. Let us give a short proof using Cauchy's theory.
    
    Let $(u, u') \in \ell^+(E) \cap \ell^-(E)$, and let $\psi$ be the Cauchy solution of $- \psi'' + V \psi = E \psi$ with $\psi(0) = u$ and $\psi'(0) = u'$. By uniqueness of the Cauchy solution, the restriction of $\psi$ on $\R^\pm$ is in $\cS^{\pm}(E)$. In particular, $\psi$ is square integrable in $\pm \infty$, so $\psi \in L^2(\R, \C^n)$. Then, since $V$ is bounded, $\psi'' = (E - V) \psi$ is also in $L^2(\R, \C^n)$, and $\psi$ is in the domain  $H^2(\R, \C^n)$. As it satisfies $(h - E) \psi = 0$, it is an eigenvector of $h$ for the eigenvalue $E$. Conversely, if $\psi$ is such an eigenvector, then $\Tr( \psi) \in \ell^+(E) \cap \ell^-(E)$.
\end{proof}

One can therefore detect eigenvalues as the crossings of $\ell^+(E)$ and $\ell^-(E)$. We now prove that, when $E$ is in the resolvent set of the bulk operator, we have instead $\ell^+(E) \oplus \ell^-(E) = \cH_b$. Our proof only uses the fact that the bulk operator $h$ is essentially self-adjoint. 

\begin{theorem} \label{th:Lagrangian_planes_Hill_lE}
    For all $E \in \R \setminus \sigma(h)$, the sets $\ell^\pm(E)$ are Lagrangian planes of $\cH_b$, and
    \[
        \cH_b = \ell^+(E) \oplus \ell^-(E).
    \]    
\end{theorem}

This shows for instance that there are as many Cauchy's solutions which decay to $+ \infty$ as solutions which decay to $-\infty$ (here, they both form subspaces of dimension $n$). This is somehow reminiscent of the Weyl's criterion~\cite{weyl1910gewohnliche} (see also~\cite{lidskii1954number}).

Again, we postpone the proof to the Schrödinger section (see the proof of Theorem~\ref{th:Lagrangian_planes_Schrodinger_lE} below), as it is similar, but somehow looks more complex in the PDE setting.

\begin{remark} \label{rem:ell+_and_ell-}
    In the proof given below, we use that $h = - \partial_{xx}^2 + V$ is self-adjoint on the whole line, and deduce that $\ell^+(E)$ and $\ell^-(E)$ are both Lagrangian planes. Note however that $\ell^+(E)$ is independent of $V$ on $\R^-$. So $\ell^+(E)$ is a Lagrangian plane whenever there exists an extension of $V$ on $\R^-$ for which the corresponding bulk operator has $E$ in its resolvent set.
\end{remark}

\begin{remark} \label{rem:not-always-lagrangian-planes}
    The spaces $\ell^\pm(E)$ are not always Lagrangian planes. For instance, if $V : \R \to \R$ is $1$-periodic, the spectrum of $h := - \partial_{xx}^2 + V$ is composed of bands and gaps. For $E \in \sigma(h)$, the set of solutions of $(h - E) \psi$ is two-dimensional, and spanned by two quasi-periodic functions, hence the solutions never decay at $\pm \infty$. So for all $E \in \sigma(h)$, we have $\ell^+(E) = \ell^-(E) = \emptyset$.
\end{remark}


At this point, we defined two types of Lagrangian planes for a given operator $h$. First, we defined the planes $\ell^\sharp$ representing the boundary conditions of a self-adjoint extension of the edge Hamiltonian $h^\sharp$. Then, we defined the planes $\ell^+(E)$ as the set of traces of $ \Ker (h_{\rm max}^\sharp - E) $. If $\Tr(\psi) \in \ell^+(E) \cap \ell^\sharp$, then $\psi$ is in the domain of $h^\sharp$, and satisfies $(h^\sharp - E) \psi = 0$. So $\psi$ is an eigenvector for the eigenvalue $E$. This proves the following (compare with Lemma~\ref{lem:E_eigenvalue_of_h}).

\begin{lemma} \label{lem:dimKer=dimCap}
    Let $E \in \R \setminus \sigma(h)$, and consider a self-adjoint extension $(h^\sharp, \ell^\sharp)$ of the edge operator. Then
    \[
        \dim \Ker \left( h^\sharp - E  \right) = \dim \left(\ell^{+}(E) \cap  \ell^\sharp  \right).
    \]
\end{lemma}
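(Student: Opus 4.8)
The plan is to realise $\Tr$ as a linear isomorphism from $\cS^+(E)$ onto $\ell^+(E)$, and then to match the two subspaces whose dimensions we wish to compare through this isomorphism.

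First, I would recall the domain of the extension. By Theorem~\ref{th:self_adjoint_extensions_are_Lagrangian_planes}, the self-adjoint operator $(h^\sharp, \ell^\sharp)$ acts as $h^\sharp_{\rm max}$ on the domain $\Tr^{-1}(\ell^\sharp) \subset \cD^\sharp_{\rm max}$. Consequently, a function $\psi$ lies in $\Ker(h^\sharp - E)$ if and only if $\psi \in \cD^\sharp_{\rm max}$, $(h^\sharp_{\rm max} - E)\psi = 0$, and $\Tr(\psi) \in \ell^\sharp$. The first two conditions say exactly that $\psi \in \cS^+(E)$, so
\[
\Ker(h^\sharp - E) = \left\{ \psi \in \cS^+(E), \ \Tr(\psi) \in \ell^\sharp \right\}.
\]

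Next, I would prove that $\Tr$ restricts to a linear isomorphism $\cS^+(E) \to \ell^+(E)$. Surjectivity is precisely the definition~\eqref{eq:def:ell^pm(E)} of $\ell^+(E)$. For injectivity, I would use that every $\psi \in \cS^+(E)$ solves the second order linear ODE $-\psi'' + V\psi = E\psi$, and is therefore a Cauchy solution uniquely determined by the initial data $(\psi(0), \psi'(0)) = \Tr(\psi)$; if $\Tr(\psi) = 0$, the uniqueness theorem for the Cauchy problem forces $\psi \equiv 0$. Thus $\Tr|_{\cS^+(E)}$ is a bijection onto $\ell^+(E)$.

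Finally, since $\Tr|_{\cS^+(E)}$ is a linear isomorphism, it carries the subspace $\{\psi \in \cS^+(E) : \Tr(\psi) \in \ell^\sharp\}$ onto $\ell^+(E) \cap \ell^\sharp$, and isomorphisms preserve dimension, whence $\dim \Ker(h^\sharp - E) = \dim(\ell^+(E) \cap \ell^\sharp)$. The one genuinely model-dependent step is the injectivity of $\Tr$ on $\cS^+(E)$: in the Hill's case it is immediate from ODE Cauchy uniqueness (this is where finite-dimensionality enters), whereas its PDE analogue rests on the reconstruction of Cauchy solutions from boundary data noted after~\eqref{eq:def:ell^pm(E)}. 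I would also observe that the hypothesis $E \in \R \setminus \sigma(h)$ is not needed for the dimension count itself, but guarantees via Theorem~\ref{th:Lagrangian_planes_Hill_lE} that $\ell^+(E)$ is a Lagrangian plane, keeping the intersection meaningful for the later Maslov-index analysis.
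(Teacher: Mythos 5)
Your proof is correct and takes essentially the same route as the paper: the paper's (brief) argument preceding the lemma likewise identifies $\Ker(h^\sharp - E)$ with the functions $\psi \in \cS^+(E)$ whose trace lies in $\ell^\sharp$, relying on the fact—noted just after the definition of $\ell^\pm(E)$—that elements of $\cS^+(E)$ are reconstructed from their Cauchy data, i.e.\ that $\Tr$ is injective on $\cS^+(E)$. Your write-up simply makes the resulting isomorphism onto $\ell^+(E) \cap \ell^\sharp$ and the dimension count explicit.
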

This result is of particular importance, since we detect eigenvalues as the crossing of two Lagrangian planes. The first one $\ell^+(E)$ only depends on bulk properties ({\em e.g.} on the potential $V$), while the second one $\ell^\sharp$ only depends on the chosen boundary conditions at the edge (and is usually independent of the choice of $V$). 

If in addition Assumption B holds, then we can introduce $\cU^+(E)$ and $\cU^\sharp$ the unitaries corresponding to the Lagrangian planes $\ell^+(E)$ and $\ell^\sharp$ respectively, and we have
\[
\dim \Ker \left( h^\sharp - E  \right) = \dim \left(\ell^{+}(E) \cap  \ell^\sharp  \right)
= \dim\left( (\cU^\sharp)^* \cU^{+}(E)  - 1 \right).
\]

\begin{remark}[Scattering coefficients] \label{rem:scattering}
    Let us give an interpretation of the unitary $\cU^+(E)$. For $E \notin \sigma(h)$ and $E > 0$, waves cannot propagate in the medium at energy $E$. Considering the half-medium $- \partial_{xx}^2 + \1(x > 0) V(x)$, any incident wave coming from the left of the form $\re^{ \ri k x} u$ with $k := \sqrt{E}$ and $u \in \C^n$ ``must bounce back". According to scattering theory, there is a unitary operator $R(k) \in \U(n)$, called the {\em reflection coefficient}, so that there is a continuous solution $\psi_u(x)$ of
    \[
        \begin{cases}
            \psi_u(x)  = \re^{ \ri k x} u + \re^{ - \ri k x} \left( R(k) u \right) & \quad \text{for} \quad  x < 0 \\
            \left( - \partial_{xx}^2 + V(x) \right) \psi_u(x) = 0 & \quad \text{for} \quad  x > 0
        \end{cases},
    \]
    and $\psi_u(x)$ is square-integrable at $+ \infty$ (no transmission). This shows that $\psi_u \1_{\R^+} \in \cS^+(E)$. Taking boundary values, we obtain
    \[
       \forall u \in \C^n, \quad  \begin{pmatrix}
            \left( \bbI_n + R(k) \right) u \\
            \ri k \left( \bbI_n - R(k) \right) u
        \end{pmatrix} \in \ell^+(E) = \left\{
     \begin{pmatrix}
         1 \\ \ri 
     \end{pmatrix} x + \begin{pmatrix}
     1 \\ -\ri 
 \end{pmatrix} \cU^+(E) x, \quad x \in \C^n \right\}.
    \]
    This leads to the identity
    \[
         k \cdot \dfrac{1 - R(k)}{1 + R(k)} = \frac{1 - \cU^+(E)}{1 + \cU^+(E)}, \quad k = \sqrt{E}.
    \]
    In other words, the Cayley transform of $\cU^+(E)$ equals the one of the reflection coefficient $R(k)$, up to the multiplicative factor $k$. 
\end{remark}


\section{Families of Hill's operators}
\label{sec:family_Hill}

In the previous section, we exhibit the relationships between self-adjoint extensions, Lagrangian planes, and unitaries. We now consider periodic families of these objects, parametrised by $t \in \TT^1$, namely $h^\sharp_t$, $\ell_t^\sharp$ and $\cU_t^\sharp$. For each such family, we define an index, namely a {\em spectral flow} across $E$ for the family $h^\sharp_t$, a {\em Maslov index} for the bifamily $(\ell_t^+(E), \ell_t^\sharp)$ and a {\em spectral flow} across $1$ for the family $(\cU_t^\sharp )^* \cU_t^+(E)$. All these objects are defined in the following sections, and we prove that they all coincide.

All these indices can be defined for {\em continuous} families. However, since the proofs are simpler in the continuously differentiable case, we restrict ourselves to this case. As these indices depend only on the homotopy class of the corresponding loops, similar results hold in the continuous case.


\subsection{Families of Lagrangians, and Maslov index}
\label{ssec:Maslov}

We first define the Maslov index of two families of Lagrangian spaces. This index originates from the work of Maslov in~\cite{maslov1972theorie, arnol1967characteristic}. In these works, the index was defined for finite dimensional real symplectic spaces (namely $\R^{2n}$ in Example~\ref{ex:symplectic_R2n}). A modern approach can be found in~\cite{furutani2004fredholm}, where the infinite dimensional case is studied as well. Here, we present a simple version of the theory, which is enough for our purpose.

Let $(\cH_b, \omega)$ be a symplectic Hilbert space (not necessarily finite dimensional). We define a topology on the Lagrangian Grassmanian $\Lambda(\cH_b)$ by setting
\[
    \forall \ell_1, \ell_2 \in \Lambda(\cH_b), \quad 
    {\rm dist}(\ell_1, \ell_2) := \| P_1 - P_2 \|_{\rm op},
\]
where $P_1$ and $P_2$ are the orthogonal projectors on $\ell_1$ and $\ell_2$ respectively. A family $\ell(t)$ in $\Lambda(\cH_b)$ is said to be continuous, continuously differentiable, {\em etc.} if the corresponding family of projectors $P(t)$ is so in $\cB\left(\cH_b\right)$.

\subsubsection{Definition with quadratic crossing forms}

Consider two continuously differentiable families $\TT^1 \ni t \mapsto \ell_1(t)$ and $\TT^1 \ni t \mapsto \ell_2(t)$. Let $t^* \in \TT^1$ be such that $\ell_1(t^*) \cap \ell_2(t^*) \neq \{ 0 \}$. We define the sesquilinear form $b$ on $\ell_1(t^*) \cap \ell_2(t^*)$ by
\begin{equation} \label{eq:def:sesquilinearForm}
\forall x,y \in \ell_1(t^*) \cap \ell_2(t^*) , 
\quad b_{\ell_1, \ell_2}(x,y) := \omega( x, P_1'(t^*) y ) - \omega( x, P_2'(t^*) y ).
\end{equation}

\begin{lemma}
    The sesquilinear form $b_{\ell_1, \ell_2}$ is hermitian: $b_{\ell_1, \ell_2}(x,y) = \overline{b_{\ell_1, \ell_2}(y,x)}$.
\end{lemma}
\begin{proof}
    Let $P_t := P_1(t)$. First, since ${\rm Ran} \, P_1(t)  = \ell_1(t)$ is isotropic for all $t$, we have
    \[
    \forall x, y \in \cH_b, \quad \forall t \in \TT^1, \quad \omega(P_t (x), P_t(y)) = 0.
    \]
    Differentiating gives
    \[
    \omega(P_t (x), P'_t (y) ) = - \omega(P'_t (x), P_t(y))  = \overline{\omega(P_t (y), P'_t (x) )}.
    \]
    Taking $t = t^*$ and $x, y \in \ell_1(t^*) \cap \ell_2(t^*)$, so that $P_{t^*}(x) = x$ and $P_{t^*}(y) = y$ gives 
    \[
        \forall x, y \in \ell_1(t^*) \cap \ell_2(t^*), \quad 
        \omega(x, P_t'(y)) = \overline{\omega(y, P_t'(x))}.
    \]
    A similar equality holds for $P_t = P_2(t)$, which proves that $b_{\ell_1, \ell_2}$ is hermitian.
\end{proof}
In particular, all eigenvalues of $b_{\ell_1, \ell_2}$ are real-valued. We say that $t^*$ is a {\bf regular crossing} if $\ell_1(t^*) \cap \ell_2(t^*)$ is finite dimensional (say of dimension $k \in \N$), and if all eigenvalues $(\mu_1, \cdots, \mu_k)$ of $b$ are non null (so the corresponding quadratic form is non degenerate). For such crossings, we set
\[
    {\rm deg}(t^*) =  \sum_{j=1}^k {\rm sgn} \left( \mu_j \right).
\]
The pair $(\ell_1(t), \ell_2(t))$ is regular if all crossings are regular. For such pair, the Maslov index is defined by
\[
    \boxed{ \Mas \left(\ell_1, \ell_2, \TT^1  \right) := \sum_{t^* \ \text{regular crossing}} {\rm deg} (t^*)
        \quad \in \Z. }
\]
It is clear from the definition that $\Mas(\ell_1, \ell_2, \TT^1) = - \Mas(\ell_2, \ell_1, \TT^1)$. This definition does not require Assumptions A (nor B).

\subsubsection{Definition with the unitaries $U$}

In the case where Assumption $A$ holds, we can relate the Maslov index to a spectral flow. Consider two continuously differentiable loops of Lagrangian $\ell_1(t)$ and $\ell_2(t)$ from $t \in \TT^1$ to $\Lambda(\cH_b)$. Let $U_1(t)$ and $U_2(t)$ be the corresponding unitaries from $\Ker(J - \ri)$ to $\Ker(J + \ri)$. Then $U_1$ and $U_2$ are continuously differentiable for the operator norm topology. From Lemma~\ref{lem:KerU1U2-1}, we have that for all $t \in \TT^1$,
\[
    \dim \Ker \left( U_2(t)^* U_1(t) - \bbI_{\Ker(J - \ri)}  \right) = \dim \left( \ell_1(t) \cap \ell_2(t) \right).
\]
In particular, if all crossings are regular, then $\dim(\ell_1 \cap \ell_2) = \Ker(U_2^* U_1 - 1)$ is finite dimensional. Let $t^* \in \TT^1$ be such that the kernel is non empty, of dimension $k \in \N$. By usual perturbation theory for operators~\cite{kato2013perturbation}, there are $k$ continuously differentiable branches of eigenvalues of the unitary $U_2^* U_1$ crossing $1$ around $t^*$. More specifically, we have the following.

\begin{lemma} \label{lem:branches_eigenvalues}
    Let $U(t)$ be a periodic continuously differentiable family of unitaries, and let $t^* \in \TT^1$ be such that
    \[
        \dim \Ker \left( U(t^*) - 1\right) =: k \in \N.
    \]
    Then, there is $\varepsilon > 0$, $\eta > 0$ and $k$ continuously differentiable functions $\left\{ \theta_1(t), \cdots, \theta_k(t) \right\}$ from $t \in (t^* - \varepsilon, t^* + \varepsilon)$ to $\SS^1 := \{ z \in \C, \ | z | = 1\}$, so that
    \[
        \sigma \left( U(t) \right) \cap B(1, \eta) = \{ \theta_1(t), \cdots, \theta_k(t) \} \cap B(1, \eta).
    \]
\end{lemma}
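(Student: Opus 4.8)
The plan is to reduce the statement to a finite-dimensional question by isolating the part of the spectrum of $U(t)$ near $1$ with a Riesz projection, and then to invoke one-parameter perturbation theory for Hermitian matrices.

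First I would fix the spectral window. Since $1$ is an eigenvalue of the unitary $U(t^*)$ of finite (geometric, hence full) multiplicity $k$, and --- as is automatic when $\cH_b$ is finite dimensional, as in the Hill's case, and as holds in the infinite-dimensional applications where $U_2^* U_1 - \bbI$ is Fredholm --- an isolated point of $\sigma(U(t^*))$, I can choose $\eta > 0$ so small that $\overline{B(1, \eta)} \cap \sigma(U(t^*)) = \{ 1\}$, and let $\gamma := \partial B(1, \eta)$ be the corresponding circle in $\C$. By upper semicontinuity of the spectrum and continuity of $t \mapsto U(t)$, after shrinking $\varepsilon$ I may assume $\gamma \cap \sigma(U(t)) = \emptyset$ for all $t \in (t^* - \varepsilon, t^* + \varepsilon)$. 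On this interval the Riesz projection
\[
    P(t) := \frac{1}{2\pi \ri} \oint_\gamma \left( z \, \bbI_{\cH_b} - U(t) \right)^{-1} \rd z
\]
is well-defined; since $t \mapsto U(t)$ is continuously differentiable and the resolvent depends continuously differentiably on $U(t)$ uniformly on $\gamma$, the family $P(t)$ is continuously differentiable in $\cB(\cH_b)$. As a continuous family of projections it has locally constant rank, equal to $\dim \Ran P(t^*) = k$. Because $U(t)$ is normal, each $P(t)$ is an orthogonal projection, $\Ran P(t)$ is $U(t)$-invariant, and $\sigma(U(t)) \cap B(1, \eta)$ consists exactly of the eigenvalues of the $k \times k$ unitary $U(t)|_{\Ran P(t)}$, counted with multiplicity.

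Next I would trivialise the moving subspace $\Ran P(t)$. Using Kato's transformation one constructs a continuously differentiable family of unitaries $W(t) : \cH_b \to \cH_b$ with $W(t^*) = \bbI_{\cH_b}$ and $W(t) \Ran P(t^*) = \Ran P(t)$ (for instance by solving $W'(t) = [P'(t), P(t)] W(t)$). Then $M(t) := W(t)^* U(t) W(t)$ restricts to a continuously differentiable family of $k \times k$ unitary matrices $\widehat{M}(t)$ on the fixed space $\Ran P(t^*)$, with $\sigma(\widehat{M}(t)) = \sigma(U(t)) \cap B(1, \eta)$. Choosing $\eta$ small, all eigenvalues of $\widehat{M}(t)$ lie where the principal logarithm is analytic, so $A(t) := - \ri \, \mathrm{Log}\, \widehat{M}(t)$ is a continuously differentiable family of Hermitian $k \times k$ matrices; moreover $A(t^*) = 0$ and, for $t \neq t^*$, one has $0 \notin \sigma(A(t))$ since $1 \notin \sigma(U(t))$.

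Finally, I would apply standard one-parameter perturbation theory to the Hermitian family $A(t)$ (\cite{kato2013perturbation}) to obtain continuously differentiable real functions $\lambda_1(t), \dots, \lambda_k(t)$ enumerating $\sigma(A(t))$ with multiplicity on $(t^* - \varepsilon, t^* + \varepsilon)$; setting $\theta_j(t) := \re^{\ri \lambda_j(t)}$ then gives the desired continuously differentiable maps into $\SS^1$ with $\{ \theta_1(t), \dots, \theta_k(t) \} \cap B(1, \eta) = \sigma(U(t)) \cap B(1, \eta)$. I expect this finite-dimensional step to be the delicate one: for a merely continuous family one only gets continuous branches, and differentiability genuinely uses the $C^1$ hypothesis together with the fact that $A(t)$ is \emph{Hermitian}, which rules out the square-root branching of eigenvalues that occurs for general matrix families at a degenerate crossing such as $t^*$. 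The earlier reduction steps are routine once the resolvent bounds on $\gamma$ are made uniform in $t$.
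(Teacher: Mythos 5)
Your proposal is correct, and it is essentially the argument the paper has in mind: the paper gives no proof of this lemma, delegating it to \enquote{usual perturbation theory}~\cite{kato2013perturbation}, and your chain --- Riesz projection to isolate the spectrum near $1$, Kato's transformation to trivialise the moving $k$-dimensional subspace, the logarithm to pass to a Hermitian family, and the Rellich/Kato theorem on $C^1$ eigenvalue branches of $C^1$ Hermitian families --- is the standard implementation of exactly that citation, including the correct observation that Hermitian-ness is what rules out square-root branching at the degenerate crossing.

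One point you raise in passing deserves emphasis, because it is not merely a convenience: in infinite dimensions the stated hypotheses do \emph{not} imply that $1$ is isolated in $\sigma(U(t^*))$, and without isolation the lemma is actually false. Take $U(t) := \re^{\ri t^2} \left( U_0 \oplus \bbI_k \right)$ on $\ell^2(\N) \oplus \C^k$, where $U_0$ is diagonal with eigenvalues $\re^{\ri/n}$, $n \ge 1$. Then $\dim \Ker\left( U(0) - 1 \right) = k$, and $1 \notin \sigma(U(t))$ for small $t \neq 0$ (since $1/n + t^2$ never lies in $2\pi\Z$), yet $\sigma(U(t)) \cap B(1, \eta)$ contains the infinitely many points $\re^{\ri(1/n + t^2)}$ for every $t$ and every $\eta > 0$, so no finite collection of branches can describe it. Your added assumption (automatic in the finite-dimensional Hill setting, and satisfied in the paper's infinite-dimensional applications, where regularity of the energy keeps $E$ out of the essential spectrum, i.e.\ keeps $U_2^* U_1 - \bbI$ Fredholm with $1$ isolated) is therefore a genuine and necessary sharpening of the statement as written.
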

The functions $\theta_j$ are the branches of eigenvalues of $U$. We say that $t^*$ is a regular crossing if $k := \dim \Ker(U(t^*) - 1) < \infty$, and if $\theta_j'(t^*) \neq 0$ for all $1 \le j \le k$. Note that since $\theta_j$ has values in $\SS^1$, we have $\theta_j'(t^*) \in \ri \R$. The degree of $t^*$ is
\[
    {\rm deg}(t^*) := \sum_{j = 1}^k {\rm sgn} \left( - \ri \theta'_j  (t^*) \right).
\]
This is the net number of eigenvalues crossing $1$ in $\SS^1$ in the positive (counter-clockwise) direction. Finally, if all crossings are regular, the {\bf spectral flow} of $U$ across $1$ is
\[
    \boxed{ \Sf \left( U, 1, \TT^1  \right) := \sum_{t^* \ \text{regular crossing}} {\rm deg (t^*)} \quad \in \Z. }
\]

\begin{lemma}
    Let $\ell_1(t)$ and $\ell_2(t)$ be two continuously differentiable families of Lagrangians in $\Lambda(\cH_b)$, and let $U_1(t)$ and $U_2(t)$ be the corresponding unitaries. Then $t^* \in \TT^1$ is a regular crossing of $(\ell_1, \ell_2)$ iff it is a regular crossing of $U_2^* U_1$. If all crossings are regular, then,
    \[
        \Mas\left(\ell_1, \ell_2, \TT_1 \right) = \Sf \left(U_2^* U_1, 1, \TT^1 \right).
    \]
\end{lemma}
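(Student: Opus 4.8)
The plan is to prove the statement crossing by crossing: I would show that a point $t^*\in\TT^1$ is a regular crossing of $(\ell_1,\ell_2)$ if and only if it is a regular crossing of $W:=U_2^*U_1$, and that the local degrees agree there, $\deg_{\Mas}(t^*)=\deg_{\Sf}(t^*)$. Summing over the finitely many regular crossings then yields $\Mas(\ell_1,\ell_2,\TT^1)=\Sf(U_2^*U_1,1,\TT^1)$. Throughout I work under Assumption~A, so that the unitaries $U_i(t)$ exist. The dimension matching is already granted by Lemma~\ref{lem:KerU1U2-1}, which gives an isomorphism $\Ker(W(t^*)-\bbI)\approx \ell_1(t^*)\cap\ell_2(t^*)$; in particular the two kernels are simultaneously finite dimensional, of the same dimension $k$. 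The heart of the argument is thus to identify the Maslov crossing form $b$ on $\ell_1(t^*)\cap\ell_2(t^*)$ with the Hermitian form controlling the motion of the eigenvalues of $W$ through $1$.

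First I would write the orthogonal projectors explicitly. Since $J=-J^*$, the eigenspaces $\Ker(J-\ri)$ and $\Ker(J+\ri)$ are orthogonal, and because each $U_i$ is unitary the map $y\mapsto\tfrac1{\sqrt2}(y+U_iy)$ is an isometry onto $\ell_i$. A direct computation in the block decomposition $\cH_b=\Ker(J-\ri)\oplus\Ker(J+\ri)$, in which $J$ acts as $\ri$ on the first factor and $-\ri$ on the second, then gives
\[
  P_i=\frac12\begin{pmatrix}\bbI & U_i^*\\ U_i & \bbI\end{pmatrix},
  \qquad
  P_i'=\frac12\begin{pmatrix}0 & (U_i')^*\\ U_i' & 0\end{pmatrix}.
\]
Fix $\xi\in\Ker(W(t^*)-\bbI)$, set $w:=U_1\xi=U_2\xi\in\Ker(J+\ri)$ and let $x:=\xi+U_1\xi\in\ell_1(t^*)\cap\ell_2(t^*)$ be the associated vector from Lemma~\ref{lem:KerU1U2-1}. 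Plugging $x=(\xi,w)$ into $\omega(x,P_i'x)=\langle x,JP_i'x\rangle$ and using $\langle u,v\rangle=\overline{\langle v,u\rangle}$ collapses the expression to $\omega(x,P_i'x)=-{\rm Im}\,\langle U_i'\xi,U_i\xi\rangle$, so that, with $z_i:=\langle U_i'\xi,U_i\xi\rangle$,
\[
  b(x,x)=\omega(x,P_1'x)-\omega(x,P_2'x)={\rm Im}\,z_2-{\rm Im}\,z_1.
\]

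Next I would compute the eigenvalue motion. Differentiating $U_i^*U_i=\bbI$ shows $U_i^*U_i'$ is skew-adjoint, hence $z_i=\langle U_i'\xi,U_i\xi\rangle=\langle \xi,U_i^*U_i'\xi\rangle\in\ri\R$ is purely imaginary and $\overline{z_i}=-z_i$. From $W'=(U_2')^*U_1+U_2^*U_1'$ and $U_1\xi=U_2\xi=w$ one then gets $\langle\xi,W'\xi\rangle=z_2+\overline{z_1}=z_2-z_1$, whence
\[
  -\ri\,\langle\xi,W'(t^*)\xi\rangle={\rm Im}\,z_2-{\rm Im}\,z_1=b(x,x).
\]
Thus, transported to $\Ker(W(t^*)-\bbI)$ through the isomorphism of Lemma~\ref{lem:KerU1U2-1}, the Maslov crossing form $b$ equals the Hermitian form $\xi\mapsto-\ri\langle\xi,W'(t^*)\xi\rangle$. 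By Kato's first order perturbation theory applied to the isolated eigenvalue $1$ of the unitary $W(t)$ (this is exactly the setting of Lemma~\ref{lem:branches_eigenvalues}), the numbers $-\ri\theta_j'(t^*)$ are precisely the eigenvalues of the compression of $-\ri W'(t^*)$ to $\Ker(W(t^*)-\bbI)$, that is the eigenvalues $\mu_j$ of $b$.

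The conclusion is then immediate. Finite dimensionality of $\ell_1\cap\ell_2$ and of $\Ker(W-\bbI)$ coincide by Lemma~\ref{lem:KerU1U2-1}, and since the eigenvalue lists $\{\mu_j\}$ and $\{-\ri\theta_j'(t^*)\}$ agree, nondegeneracy of $b$ is equivalent to $\theta_j'(t^*)\neq0$ for all $j$; nondegeneracy also forces the crossing to be isolated, so the hypotheses of Lemma~\ref{lem:branches_eigenvalues} are met. Matching the two lists term by term gives ${\rm sgn}(\mu_j)={\rm sgn}(-\ri\theta_j'(t^*))$, hence $\deg_{\Mas}(t^*)=\deg_{\Sf}(t^*)$, and summing over $\TT^1$ finishes the proof. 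The main obstacle, and the step requiring the most care, is the degenerate first order perturbation identity linking $\theta_j'(t^*)$ to the eigenvalues of the compressed $W'(t^*)$; it is precisely the skew-adjointness of $U_i^*U_i'$ (equivalently $z_i\in\ri\R$) that makes $-\ri\langle\xi,W'\xi\rangle$ real and equal to the crossing form.
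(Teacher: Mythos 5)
Your proof is correct, and it follows the same overall strategy as the paper---work crossing by crossing and identify the Maslov crossing form with the speed at which the eigenvalues of $W=U_2^*U_1$ cross $1$---but the execution is genuinely different. The paper reduces to the case where only $\ell_1$ depends on $t$ (leaving the general case as ``similar''), selects continuously differentiable eigenvector branches $x_j^-(t)$ of $W(t)$, derives the Hellmann--Feynman identity $\delta_{ij}\theta_j'=\bra U_1x_i^-,(\partial_tU_1)x_j^-\ket$ by hand, and then shows that $b$ is diagonal in the transported basis with entries $-\ri\theta_j'$, via $\omega(x_i,P_1'x_j)=\omega(x_i,\partial_tx_j)$ and the vanishing of cross terms. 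You instead write the orthogonal projectors explicitly as $P_i=\tfrac12\begin{pmatrix}\bbI&U_i^*\\ U_i&\bbI\end{pmatrix}$ in the decomposition $\Ker(J-\ri)\oplus\Ker(J+\ri)$, which yields the basis-free identity $b(x,x)=-\ri\bra\xi,W'(t^*)\xi\ket$ with both families moving simultaneously; the eigenvalue speeds are then obtained by quoting degenerate perturbation theory (eigenvalues of the compression of $-\ri W'$ to $\Ker(W-\bbI)$). Your route buys a cleaner, symmetric treatment with no reduction step; the paper's buys a self-contained derivation of the perturbation-theoretic input that you invoke as a black box (it is the same input as Lemma~\ref{lem:branches_eigenvalues}, so this is legitimate). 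Two small points to repair: (i) the intermediate equality $z_i=\bra\xi,U_i^*U_i'\xi\ket$ has the wrong sign---with the paper's convention for adjoints one gets $z_i=-\bra\xi,U_i^*U_i'\xi\ket$---but you only use $z_i\in\ri\R$, which holds either way; (ii) since $\xi\mapsto\xi+U_1\xi$ scales norms by $\sqrt2$, the two eigenvalue lists agree only up to a positive factor, so what actually transfers is the signature of the Hermitian forms (Sylvester's law under the congruence you exhibit), which is exactly what the degrees require; the paper's own proof carries the same harmless imprecision.
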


\begin{proof}
    For the sake of simplicity, we assume that only $\ell_1$ depends on $t$. The proof is similar in the general case. Let $t^*$ be a regular crossing point, and let 
    \[
        k := \dim \left( \ell_1(t^*) \cap \ell_2 \right) = \dim \Ker (U_2^* U_1(t^*) - 1).
     \] 
    Let $\theta_1, \theta_2, \cdots \theta_k$ be the branches of eigenvalues crossing $1$ at $t = t^*$ (see Lemma~\ref{lem:branches_eigenvalues}), and let $x_1^-(t), \cdots, x_k^-(t)$ be a corresponding continuously differentiable set of orthonormal eigenfunctions in $\Ker(J - \ri)$. First, we have, for all $1 \le i, j \le k$, and all $t \in (t^* - \varepsilon, t^* + \varepsilon)$,
    \[
         \bra x_i^-, \left[ U_2^* U_1(t)  - \theta_j \right] x_j^-  \ket_{\cH_b} = 0.
    \]
    Differentiating and evaluating at $t = t^*$ shows that
    \[
    \bra  x_i^-, \partial_t \left[ U_2^{*} U_1  - \theta_j \right] x_j^-  \ket_{\cH_b}
    +  \bra \left[ U_2^{*} U_1(t^*)  - 1 \right]^* x_i^-, (\partial_t x_j^-)  \ket_{\cH_b} = 0.
    \]
    At $t = t^*$, we have $U_2^{*} U_1(t^*) x_i^- = x_i^-$, so $U_1^{*}(t^*) U_2 x_i^- = x_i^-$ as well, and the last term vanishes. We get the Hellmann-Feynman equation
    \[
    \delta_{ij} \theta_j'(t^*) 
    = \bra U_2 x_i^-,  (\partial_t U_1) x_j^- \ket_{\cH_b} \big|_{t = t^*}
    = \bra U_1 x_i^-,  (\partial_t U_1) x_j^- \ket_{\cH_b} \big|_{t = t^*}.
    \]
    
    On the other hand, we set 
    \begin{equation} \label{eq:def_xj}
        x_j(t) := x_j^-(t) + U_1(t) x_j^-(t) \quad \in \ell_1.
     \end{equation}
     We have $x_j \in \ell_1$ for all $t$, so $P_1 x_j = x_j$ for all $t$. Differentiating gives
    \[
        \left(\partial_t P_1 \right) x_j + P_1 \left(\partial_t x_j\right) = (\partial_t x_j).
    \]
    Since $P_1 \left( \partial_t x_j\right) \in \ell_1$, which is Lagrangian, we have $\omega(x_i, P_1( \partial_t x_j)) = 0$, so
    \begin{equation} \label{eq:omega(x,P'x)}
    \omega \left(x_i,  \left(\partial_t P_1 \right) x_j \right) = \omega(x_i, \partial_t x_j).
    \end{equation}
    In addition, differentiating~\eqref{eq:def_xj} shows that
    \[
    \partial_t x_j = \left[ 1 + U_1 \right] (\partial_t x_j^-) + \left( \partial_t U_1\right) x_j^-.
    \]
    Since $x_j^- \in \Ker (J - \ri)$  for all $t$, we have $(\partial_t x_j^-) \in \Ker(J - \ri)$ as well, and the first term is in $\ell_1$. On the other hand, $(\partial_t U_1) x_j^-$ is in $\Ker(J + \ri)$. Combining with~\eqref{eq:omega(x,P'x)} and~\eqref{eq:cross_terms_omega}, this gives
    \[
        \omega \left(x_i,  \left(\partial_t P_1 \right) x_j \right) = \omega(x_i, \partial_t x_j) = \omega (x_i, \left( \partial_t U_1\right) x_j^-)
     = \omega( U_1 x_i^-, \left( \partial_t U_1 \right) x_j^- ).
    \]
    Using that $\omega(x,y) = \bra x, Jy \ket_{\cH_b}$ and that $(\partial_t U_1) x_j^- \in \Ker(J + \ri)$, we obtain, at $t = t^*$, and recalling the definition of $b$ in~\eqref{eq:def:sesquilinearForm},
    \begin{align*}
         b \left(x_i, x_j \right) & = \omega(x_i, \left( \partial_t P_1 \right) x_j) =  \omega( U_1 x_i^-, \left( \partial_t U_1 \right) x_j^- ) \\
         & = \left\bra U_1 x_i^-, J \left( \partial_t U_1 \right) x_j^- \right\ket
          = -\ri \bra U_1 x_i^-, (\partial_t U_1) x_j^- \ket_{\cH_b} = \delta_{ij} (-\ri) \theta'_j(t^*).
    \end{align*}
   The sesquilinear form $b$ is therefore diagonal in the $(x_1, \cdots, x_k)$ basis, with corresponding eigenvalues $(-\ri \theta_j')$. That concludes the proof.
\end{proof}


\subsubsection{Definition with the unitary $\cU$.} 
In the case where the stronger Assumption B holds, one has a similar result with the unitaries $\cU$ instead of $U$. We state it without proof, as it is similar to the previous one.
\begin{lemma} \label{lem:Maslovl1l2_with_cU}
    If $(\cH_b = \cH_1 \times \cH_2, \omega)$ satisfies Assumption~B.
    Let $\ell_1(t)$ and $\ell_2(t)$ be two continuously differentiable families of Lagrangian planes in $\Lambda(\cH_b)$, and let $\cU_1(t)$ and $\cU_2(t)$ be the corresponding unitaries of $\cH_1$. Then $t^* \in \TT^1$ is a regular crossing of $(\ell_1, \ell_2)$ iff it is a regular crossing of $\cU_2^* \cU_1$. If all crossings are regular, then,
    \[
    \Mas\left(\ell_1, \ell_2, \TT_1 \right) = \Sf \left(\cU_2^* \cU_1, 1, \TT^1 \right).
    \]
\end{lemma}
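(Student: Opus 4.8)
The plan is to mirror the proof of the preceding lemma (the one for the unitaries $U$), replacing the correspondence of Lemma~\ref{lem:Lagrangian_unitaries_U} with that of Lemma~\ref{lem:Lagrangian_unitaries_cU}, and exploiting the explicit unitaries $Q_\pm : \cH_1 \to \Ker(J \mp \ri)$ that intertwine the two pictures. Concretely, recall that under Assumption B we have $\cU_i = Q_+^* U_i Q_-$, so that $\cU_2^* \cU_1 = Q_-^* U_2^* Q_+ Q_+^* U_1 Q_-$; since $Q_+ Q_+^* = \bbI_{\Ker(J+\ri)}$ (because $Q_+$ is a unitary between $\cH_1$ and $\Ker(J+\ri)$), this simplifies to $\cU_2^* \cU_1 = Q_-^*\,(U_2^* U_1)\, Q_-$. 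Thus $\cU_2^* \cU_1$ and $U_2^* U_1$ are unitarily conjugate via the fixed (time-independent) unitary $Q_-$, so they have identical spectra, identical eigenvalue branches $\theta_j(t)$, and identical eigenvalue derivatives at every crossing.

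First I would establish this conjugacy relation and note that $Q_-$ does not depend on $t$ (it is built from the fixed $V$ in Assumption B). This immediately gives that $t \mapsto \cU_2^*(t)\cU_1(t)$ is continuously differentiable whenever $t \mapsto U_2^*(t) U_1(t)$ is, and that the kernel dimensions match: $\dim\Ker(\cU_2^*\cU_1 - \bbI_{\cH_1}) = \dim\Ker(U_2^* U_1 - \bbI_{\Ker(J-\ri)})$, consistent with the isomorphism $\Ker(\cU_2^*\cU_1 - \bbI_{\cH_1}) \approx \ell_1 \cap \ell_2$ already recorded in Lemma~\ref{lem:Lagrangian_unitaries_cU}. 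Next I would observe that if $x^-(t)$ is a branch of eigenvectors of $U_2^* U_1$ with eigenvalue $\theta(t)$, then $Q_-^* x^-(t)$ is a branch of eigenvectors of $\cU_2^* \cU_1$ with the \emph{same} eigenvalue $\theta(t)$; hence the branches, their derivatives $\theta_j'(t^*)$, and the degree $\sum_j \mathrm{sgn}(-\ri\theta_j'(t^*))$ all coincide term by term. This shows that $t^*$ is a regular crossing of $\cU_2^*\cU_1$ if and only if it is a regular crossing of $U_2^* U_1$, and that the two spectral flows agree.

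I would then conclude by combining this equality of spectral flows with the already-proved identity $\Mas(\ell_1, \ell_2, \TT^1) = \Sf(U_2^* U_1, 1, \TT^1)$ from the previous lemma:
\[
    \Mas(\ell_1, \ell_2, \TT^1) = \Sf(U_2^* U_1, 1, \TT^1) = \Sf(\cU_2^* \cU_1, 1, \TT^1).
\]
I do not expect any serious obstacle here, since the conjugation by the fixed unitary $Q_-$ is exactly the kind of spectral-flow-invariant transformation that makes the two unitary pictures interchangeable; the only point requiring a little care is verifying that $Q_+ Q_+^* = \bbI$ on the correct summand $\Ker(J+\ri)$ rather than on all of $\cH_b$, which is precisely the content of $Q_\pm$ being unitaries onto $\Ker(J \mp \ri)$ as established right before Lemma~\ref{lem:Lagrangian_unitaries_cU}. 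Alternatively, and perhaps even more transparently, one could repeat the Hellmann–Feynman computation of the previous proof verbatim with $x_j(t) := Q_+(U_1 x_j^-) \cdots$ written directly in terms of $\cU_1$ via the parametrization in Lemma~\ref{lem:Lagrangian_unitaries_cU}; but the conjugacy argument is shorter and avoids re-deriving the crossing form, which is why the author states the result without proof.
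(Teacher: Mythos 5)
Your proof is correct, but it takes a genuinely different (and arguably cleaner) route than the paper intends. The paper omits the proof with the remark that it is ``similar to the previous one'', i.e.\ the intended argument repeats the Hellmann--Feynman/crossing-form computation, this time with the parametrization of Lemma~\ref{lem:Lagrangian_unitaries_cU} in place of Lemma~\ref{lem:Lagrangian_unitaries_U}. You instead reduce the statement to the previous lemma by observing the exact conjugacy
\[
\cU_2^*(t)\,\cU_1(t) \;=\; Q_-^*\,\bigl(U_2^*(t)\,U_1(t)\bigr)\,Q_-,
\]
where $Q_-$ is the fixed, $t$-independent unitary from $\cH_1$ onto $\Ker(J-\ri)$ built from $V$. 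Since conjugation by a fixed unitary preserves spectra, eigenvalue branches, their derivatives, and hence regular crossings and the spectral flow across $1$, the identity $\Mas(\ell_1,\ell_2,\TT^1)=\Sf(U_2^*U_1,1,\TT^1)$ from the previous lemma transfers verbatim. This buys brevity and makes transparent that the $\cU$-picture contains no new analytic content beyond the $U$-picture, whereas the paper's (implicit) route re-derives the crossing form, which is more self-contained but redundant. One small slip: you twice write $Q_\pm : \cH_1 \to \Ker(J\mp\ri)$, whereas the formula $Q_\pm(x) = \tfrac{1}{\sqrt2}\,(x,\, \mp\ri Vx)$ gives $Q_\pm : \cH_1 \to \Ker(J\pm\ri)$; this is harmless, since your actual computation correctly uses $Q_+Q_+^* = \bbI_{\Ker(J+\ri)}$ and $U_i : \Ker(J-\ri)\to\Ker(J+\ri)$, which is exactly what is needed for $Q_-^*U_2^*\,Q_+Q_+^*\,U_1Q_-$ to collapse to $Q_-^*\bigl(U_2^*U_1\bigr)Q_-$.
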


The importance of this lemma comes from the fact that, in the finite dimensional case ($\cH_1 \approx \C^n$), the spectral flow of a periodic family $\cU(t) \in \U(n)$ across $1$ (or any other point in $\SS^1$) equals the winding number of $\det \cU(t)$:
\[
    \Sf \left( \cU, z \in \SS^1, \TT^1  \right) = \Winding \left( \det(\cU), \TT^1 \right).
\]
In our case with $\cU = \cU_2^* \cU_1$, we have $\det(\cU_2^* \cU_1) = \det(\cU_1)/ \det(\cU_2)$, hence 
\[
    \Winding \left( \det(\cU_2^* \cU_1), \TT^1 \right) = \Winding \left( \det \cU_1, \TT^1\right) -  \Winding \left(\det \cU_2, \TT^1\right),
\]
that is, the index splits. 
\begin{definition} \label{def:I}
    For a periodic family of (finite dimensional) Lagrangians $\ell(t)$ with corresponding unitaries $\cU(t)$, we define the index
    \begin{equation*} 
        \cI(\ell, \TT^1) := {\rm Winding}( \det (\cU(t)), \TT^1) \quad \in \ZZ.
    \end{equation*}
\end{definition}
We can reformulate Lemma~\ref{lem:Maslovl1l2_with_cU} as
\begin{equation} \label{eq:splitting}
        \Mas\left(\ell_1, \ell_2, \TT^1 \right) = \cI (\ell_1, \TT^1) - \cI (\ell_2, \TT^1).
\end{equation}

\subsection{Families of Hill's operators, spectral flow} 
\label{ssec:spectral_flow}

We now focus on a periodic family of Hill's operators $\left(h_t \right)_{t \in \TT^1}$. Let $\TT^1 \ni t \mapsto V_t$ be a periodic family of potentials satisfying~\eqref{eq:form_V}, and set
\[
    h_t := - \partial_{xx}^2 + V_t(x).
\]
We assume that $t \mapsto V_t$ is continuously differentiable as a map from $\TT^1$ to the Banach space $L^\infty(\R, \cS_n)$. Since $\TT^1$ is compact, $V(t,x)$ is uniformly bounded. In particular, as in Section~\ref{ssec:sa_Hill}, the operator $h_t$ is essentially self-adjoint with fixed domain $\cD= H^2(\R^+, \C^n)$ for all $t \in \TT^1$.

The {\bf spectrum} of the family $(h_t)_{t \in \TT^1}$ is the set
\[
    \sigma \left( h_t, \TT^1 \right) := \bigcup_{t \in \TT^1} \sigma ( h_t ).
\]
It is the compact union of all spectra of $(h_t)$ for $t \in \TT^1$. Since $t \mapsto \sigma(h_t)$ is continuous, $\sigma \left(h_t, \TT^1 \right)$ is a closed set in $\R$. The complement of $\sigma \left(h_t, \TT^1\right)$ is the {\bf resolvent set} of the family $(h_t)_{t \in \TT^1}$.


We now consider a corresponding family of edge self-adjoint operators, of the form $(h_t^\sharp, \ell_t^\sharp)$. We say that this family is continuous, continuously differentiable, {\em etc.} if the corresponding family of Lagrangian planes $\ell_t^\sharp$ is so in $\Lambda(\cH_b)$.

Fix $E \in \R$ in the resolvent set of $(h_t)_{t \in \TT^1}$. As $t$ varies in $\TT^1$, the spectrum of the bulk operator $h_t$ stays away from $E$. However, for the edge operators $(h^\sharp_t, \ell_t^\sharp)$, some eigenvalues may cross the energy $E$. If $t^* \in \TT^1$ is such that $\dim \Ker \left(h^\sharp_{t^*} - E \right) = k \in \N$, then, as in Lemma~\ref{lem:branches_eigenvalues}, we can find $\varepsilon > 0$, $\eta > 0$ and $k$ continuously differentiable branches of eigenvalues $\lambda_j(t) \in \R$ so that, for $t \in (t^* - \varepsilon, t^* + \varepsilon)$,
\[
    \sigma \left( h_t^\sharp  \right) \cap B(E, \eta) = \{ \lambda_1(t), \cdots,  \lambda_k (t)\} \cap B(E, \eta).
\]
At $t = t^*$, we have $\lambda_1(t^*) = \cdots = \lambda_k(t^*) = E$. The crossing $t^*$ is regular if $\lambda_j'(t^*) \neq 0$ for all $1 \le j \le k$. For such a crossing, we set
\[
    {\rm deg}(t^*) = \sum_{j=1}^k {\rm sgn} \left( \lambda_j'(t^*) \right).
\]
We say that the energy $E$ is a {\bf regular energy} if all crossings at $E$ are regular. For such an energy, we define the {\bf spectral flow} of $(h_t^\sharp)$ across $E$ as the net number of eigenvalues crossing $E$ downwards (see~\cite{atiyah_patodi_singer_1976, phillips1996self, waterstraat2016fredholm}):
\[
    \boxed{ \Sf( h^\sharp_t, E, \TT^1) := - \sum_{t^* \text{regular crossing}} {\rm deg}(t^*) \quad \in \Z.}
\]

The main result of this section is the following. Recall that the index $\cI$ was defined in Definition~\ref{def:I}, and that we consider operators on the right half-space.
\begin{theorem} \label{th:main-Hill}
    Let $(a,b) \subset \R$ be any interval in $\R \setminus \sigma \left( h_t, \TT_1 \right)$. Then
    \begin{itemize}
        \item almost any $E$ in $(a,b)$ is a regular energy for $(h_t^\sharp, \ell_t^\sharp)$; 
        \item for any regular energy $E$ in $(a, b)$, we have
        \[
            \Sf(h^\sharp_t, E, \TT^1) = \Mas \left( \ell_t^+(E), \ell_t^\sharp, \TT_1  \right) 
            = \cI(\ell_t^+(E), \TT^1) - \cI(\ell_t^\sharp, \TT^1).
        \]
    \end{itemize}
\end{theorem}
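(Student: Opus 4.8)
The plan is to prove the two equalities separately. The second one, $\Mas(\ell_t^+(E), \ell_t^\sharp, \TT^1) = \cI(\ell_t^+(E), \TT^1) - \cI(\ell_t^\sharp, \TT^1)$, is immediate: in the Hill's case $\cH_b = \C^n \times \C^n$ carries its canonical symplectic form, so Lemma~\ref{lem:splitMaslov} applies verbatim with $\ell_1 = \ell_t^+(E)$ and $\ell_2 = \ell_t^\sharp$. Hence the whole content is the first equality $\Sf(h_t^\sharp, E, \TT^1) = \Mas(\ell_t^+(E), \ell_t^\sharp, \TT^1)$, which I would establish by showing that the two families have the same crossings and that the local degrees agree at each of them.

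First I would record the crossing correspondence. By Theorem~\ref{th:Lagrangian_planes_Hill_lE}, for $E \in (a,b)$ the sets $\ell_t^+(E)$ are genuine Lagrangian planes, and by Lemma~\ref{lem:dimKer=dimCap}, $\dim\Ker(h_t^\sharp - E) = \dim(\ell_t^+(E)\cap\ell_t^\sharp)$. Thus $t^*$ is a crossing of the spectral flow (an eigenvalue of $h_t^\sharp$ reaches $E$) exactly when it is a crossing of the Maslov pair $(\ell^+(E), \ell^\sharp)$, with matching multiplicity $k$. The key structural input is the \emph{monotonicity of $\ell_t^+(E)$ in the spectral parameter}. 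Writing $u_i = \Tr^+(\psi_i)$ for $\psi_i \in \cS^+(E)$ and differentiating in $E$, a Wronskian computation using $-\psi_i'' + V\psi_i = E\psi_i$ and $(h_{\rm max}^\sharp - E)\partial_E\psi_j = \psi_j$ gives $\partial_x[\bra\psi_i, \partial_E\psi_j'\ket - \bra\psi_i', \partial_E\psi_j\ket] = -\bra\psi_i, \psi_j\ket_{\C^n}$; integrating on $(0,\infty)$, the boundary term at $+\infty$ vanishing by $H^2$-decay, yields the positive-definite identity $\omega(u_i, \partial_E u_j) = \bra\psi_i, \psi_j\ket_{\cH^\sharp}$.

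With this in hand I would match the degrees at a crossing $t^*$ of multiplicity $k$. Let $\lambda_1(t), \dots, \lambda_k(t)$ be the eigenvalue branches of $h_t^\sharp$ with $\lambda_j(t^*) = E$, and let $\psi_j(t)$ be an analytic orthonormal eigenframe (degenerate perturbation theory), with traces $u_j(t) = \Tr^+(\psi_j(t)) \in \ell_t^\sharp \cap \ell_t^+(\lambda_j(t))$. Let $\tilde u_j(t, E')$ be a $C^1$ section of $\ell_t^+(E')$ with $\tilde u_j(t^*, E) = u_j$, so $u_j(t) = c_j(t)\,\tilde u_j(t, \lambda_j(t))$ with $c_j(t^*) = 1$. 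The crossing form is $b(u_i, u_j) = \omega(u_i, \partial_t P_+ u_j) - \omega(u_i, \partial_t P_\sharp u_j)$. Using identity~\eqref{eq:omega(x,P'x)}, valid since $u_i$ lies in both Lagrangians, I replace each projector derivative by a derivative along an admissible curve, $\omega(u_i, \partial_t P_+ u_j) = \omega(u_i, \partial_t\tilde u_j)$ and $\omega(u_i, \partial_t P_\sharp u_j) = \omega(u_i, \partial_t u_j)$. Differentiating $u_j(t) = c_j\tilde u_j(t, \lambda_j(t))$ and killing the $c_j'$-term by isotropy $\omega(u_i, u_j) = 0$ gives $\omega(u_i, \partial_t u_j) = \omega(u_i, \partial_t\tilde u_j) + \lambda_j'(t^*)\,\omega(u_i, \partial_E\tilde u_j)$, whence $b(u_i, u_j) = -\lambda_j'(t^*)\,\omega(u_i, \partial_E u_j) = -\lambda_j'(t^*)\,\bra\psi_i, \psi_j\ket_{\cH^\sharp} = -\lambda_j'(t^*)\,\delta_{ij}$ by the monotonicity identity. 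So $b = {\rm diag}(-\lambda_j'(t^*))$ and ${\rm deg}_{\Mas}(t^*) = \sum_j {\rm sgn}(-\lambda_j'(t^*)) = -\sum_j{\rm sgn}(\lambda_j'(t^*))$; summing over crossings and comparing with $\Sf(h_t^\sharp, E, \TT^1) = -\sum_{t^*}\sum_j{\rm sgn}(\lambda_j'(t^*))$ gives $\Mas = \Sf$.

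Finally, the almost-everywhere regularity follows from Sard's theorem: inside the gap $(a,b)$ the edge eigenvalues of $h_t^\sharp$ are discrete and, by the $C^1$-dependence of $V_t$ and $\ell_t^\sharp$, organise into countably many $C^1$ branches $\lambda_j : \TT^1 \to (a,b)$; the critical values of a $C^1$ function on the compact $1$-manifold $\TT^1$ form a null set, and a countable union of null sets is null, so almost every $E$ avoids all critical values and is a regular energy, its crossings being transversal, hence isolated and finite. I expect the main obstacle to be the analytic bookkeeping making the two-parameter crossing argument rigorous: establishing the joint $C^1$-dependence of the projector onto $\ell_t^+(E)$ (resting on the exponential dichotomy of the Cauchy problem, available since $E \notin \sigma(h_t)$) and, at degenerate crossings, producing the simultaneously analytic and orthonormal eigenframe diagonalising $b$. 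The sign conventions — that the monotonicity of $\ell^+(E)$ is positive while $\Sf$ counts downward crossings — must be tracked with care, since they are exactly what make the two indices equal rather than opposite.
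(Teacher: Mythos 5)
Your proposal is correct, and its skeleton — crossing correspondence via Lemma~\ref{lem:dimKer=dimCap}, diagonalisation of the crossing form~\eqref{eq:def:sesquilinearForm}, splitting of the Maslov index via Lemma~\ref{lem:splitMaslov} — is the same as the paper's; but the heart of the argument, the computation of the crossing form, is done by a genuinely different route. The paper differentiates in $t$ two quadratic-form identities, $\bra \psi_i, (h^\sharp_{t, {\rm max}} - \lambda_j(t))\psi_j\ket_{\cH^\sharp} = 0$ along the eigenbranch and $\bra\phi_i, (h^\sharp_{t, {\rm max}} - E)\phi_j\ket_{\cH^\sharp} = 0$ along a fixed-energy family in $\cS^+(t,E)$, and the Hellmann--Feynman terms $\bra\psi_i, \partial_t(h^\sharp_{t,{\rm max}})\psi_j\ket_{\cH^\sharp}$ cancel upon subtraction, leaving $b(u_i,u_j) = -\delta_{ij}\lambda_j'(t^*)$. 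You never introduce $\partial_t V_t$ at all: you prove the monotonicity identity $\omega(u_i, \partial_E u_j) = \bra\psi_i,\psi_j\ket_{\cH^\sharp}$ (which is correct --- it is Green's identity applied to $(h^\sharp_{\rm max}-E)\partial_E\psi_j = \psi_j$, and your Wronskian signs check out) and then use the chain rule on the two-parameter family $\ell_t^+(E')$ to trade the $t$-derivative along the eigenbranch for $\lambda_j'(t^*)$ times the $E$-derivative. This is the classical ``monotonicity in the spectral parameter'' mechanism of the Maslov-index literature (Howard--Latushkin--Sukhtayev), and it buys a conceptual explanation of the sign: the crossing form in $E$ is positive definite, so each crossing converts eigenvalue velocity into a signed count; the paper's route avoids the two-parameter family altogether at the cost of an algebraic cancellation. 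Your two-parameter bookkeeping has two small, repairable slips. First, $t \mapsto V_t$ is only assumed continuously differentiable, so ``analytic orthonormal eigenframe'' should be ``continuously differentiable'' (the paper makes the same $C^1$ choice). Second, the relation $u_j(t) = c_j(t)\,\tilde u_j(t,\lambda_j(t))$ with a \emph{scalar} $c_j$ does not hold for an arbitrary section, since $\ell_t^+(\lambda_j(t))$ has dimension $n$ and the two curves need only agree at $t^*$; either take the adapted section $\tilde u_j(t,E') := P^+_{t,E'}\, u_j(t)$, for which $c_j \equiv 1$ identically, or write $u_j(t) = \tilde u_j(t,\lambda_j(t)) + r_j(t)$ and note that $r_j(t^*) = 0$ forces $r_j'(t^*) \in \ell^+_{t^*}(E)$, so $\omega(u_i, r_j'(t^*)) = 0$ by isotropy --- the same mechanism that kills your $c_j'$ term, and the same one underlying~\eqref{eq:omega(x,P'x)}. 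With either repair, your conclusion $b(u_i,u_j) = -\lambda_j'(t^*)\,\delta_{ij}$ and the sign bookkeeping $\Sf = \Mas$ coincide exactly with the paper's.
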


\begin{proof} 
    The first part comes from Sard's lemma, and can be proved as in~\cite[Lemma III.18]{Gontier2020edge}.
    
    Fix $E$ a regular energy, let $t^*$ be a crossing point so that $\dim \Ker \left( h_{t^*}^\sharp - E \right) = k \in \N$, and let $\lambda_1, \cdots,  \lambda_k$ be the corresponding branches of eigenvalues. The idea of the proof is to follow the two families of branches $(t, \lambda_j(t))$ and $(t, E)$, describing respectively $\ell^\sharp_t$ and $\ell^+_t(E)$. 
    
    For the first branch, let $\psi_1(t), \cdots, \psi_k(t)$ be a continuously differentiable family of $\cH^\sharp$-orthonormal eigenvectors in $\cD^\sharp_t := \Tr^{-1} \left(\ell_t^\sharp\right)$, so that 
    \begin{equation} \label{eq:hpsi=lambdapsi}
        h_t^\sharp \psi_j(t) = \lambda_j (t) \psi_j(t),
    \end{equation}
    and let 
    \[
    u_j := \Tr\left( \psi_j\right), \quad \text{so that} \quad
    \ell^+_{t^*}(E)  \cap \ell^\sharp_{t^*} = {\rm Span} \left\{ u_1(t^*), \cdots, u_k(t^*)  \right\}.
    \]
    For all $t \in (t^* - \varepsilon, t^* + \varepsilon)$, we have (recall that $h_t^\sharp \subset h_{t, \rm max}^\sharp$)
    \[
        \bra \psi_i(t), (h^\sharp_{t, {\rm max}} - \lambda_j(t)) \psi_j(t) \ket_{\cH^\sharp} = 0.
    \]
    Differentiating and evaluating at $t = t^*$ gives
    \begin{align*}
        &  \bra \left( \partial_t \psi_i \right), (h^\sharp_{t^*, {\rm max}} - E )  \psi_j  \ket_{\cH^\sharp} \big|_{t = t^*}
        +  \bra \psi_i, (h^\sharp_{t^*, {\rm max}} - E )  \left( \partial_t \psi_j \right) \ket_{\cH^\sharp} \big|_{t = t^*} \\
        & \quad +        \bra \psi_i, \partial_t(h^\sharp_{t, {\rm max}} - \lambda_j ) \psi_j \ket_{\cH^\sharp} \big|_{t = t^*}
         = 0
    \end{align*}
    The first term vanishes with~\eqref{eq:hpsi=lambdapsi}. For the second term, we put the operator $(h_{t^*, {\rm max}}^\sharp - E)$ on the other side using the second Green's identity, and we get
    \[
        \bra \psi_i, (h^\sharp_{t^*, {\rm max}} - E )  \left( \partial_t \psi_j \right) \ket_{\cH^\sharp} \big|_{t = t^*} = \omega \left( u_i, \partial_t u_j  \right) \big|_{t = t^*} =  \omega \left( u_i,  \left(\partial_t P^\sharp_{t^*} \right) u_j \right) \big|_{t = t^*}.
    \]
    For the last equality, we introduced $P^\sharp_t$ the projection on $\ell_t^\sharp$, and used an equality similar to~\eqref{eq:omega(x,P'x)}. This gives our first identity
    \[
         \delta_{ij} \lambda_j'(t^*) =    \bra \psi_i, \partial_t(h^\sharp_{t, {\rm max}}  ) \psi_j \ket_{\cH^\sharp} \big|_{t = t^*} + \omega \left( u_i,  \partial_t \left( P^\sharp_t \right) u_j \right) \big|_{t = t^*}.
    \]
    
    \begin{remark}
        In the case where the domain $\ell^\sharp_t = \ell^\sharp$ is independent of $t$, we recover the Hellmann-Feynman identity $\bra \psi_i, \partial_t(h^\sharp_{t} - \lambda_j ) \psi_j \ket_{\cH^\sharp} \big|_{t = t^*} = 0$.
    \end{remark}
    
    For the second branch, let $(\phi_1(t), \cdots, \phi_k(t))$ be a smooth family of linearly independent functions in $\cS^+_t(E)$, and so that, at $t = t^*$, $\phi_j(t^*)= \1_{\R^+} \psi_j(t^*)$. We set
    \[
        v_j = \Tr(\phi_j), \quad \text{so, at $t = t^*$}, \quad v_j(t^*)= u_j(t^*).
    \]
    This time, we have, for all $t \in (t^* - \varepsilon, t^* + \varepsilon)$,
    \[
        \left\bra \phi_i, \left( h^{\sharp}_{t, \rm max} - E \right)  \phi_j \right\ket = 0.
    \]
    Differentiating and evaluating at $t = t^*$ gives as before
    \[
         \left\bra \phi_i, \partial_t \left( h^{\sharp}_{t^*, \rm max} \right)  \phi_j \right\ket 
         = - \omega(v_i, \partial_t v_j) 
         = - \omega \left( v_i, \left( \partial_t P_{t}^+\right) v_j  \right) 
         = -\omega \left( u_i, \left( \partial_t P_{t}^+\right) u_j  \right).
    \]
    Gathering the two identities shows that
    \[
        \delta_{ij} \lambda_j'(t^*) =  \omega \left( u_i,  \partial_t \left( P^\sharp_t \right) u_j \right) \big|_{t = t^*} - \omega \left( u_i, \left( \partial_t P_t^+\right) u_j  \right) \big|_{t = t^*} .
    \]
    We recognise the sesquilinear form $b$ defined in~\eqref{eq:def:sesquilinearForm}. Actually, we proved that
    \[
        \delta_{ij} \lambda_j'(t^*) = b_{\ell_t^\sharp, \ell_t^+} \left( u_i,  u_j   \right).
    \] 
    This form is therefore diagonal in the $(u_1, \cdots, u_j)$ basis, and its eigenvalues are the $\lambda_j'(t^*)$. Counting the number of positive/negative $\lambda_j'(t^*)$, and summing over all regular crossings gives as wanted
    \begin{align*}
        \Sf \left( h_t^\sharp, E, \TT^1 \right) & = - {\rm Mas} \left( \ell_t^\sharp, \ell_t^+(E), \TT^1\right) = {\rm Mas} \left(\ell_t^+(E),  \ell_t^\sharp, \TT^1\right) \\
        & = \cI(\ell_t^+(E), \TT^1) - \cI(\ell_t^\sharp, \TT^1) .
    \end{align*}
\end{proof}

\begin{remark} \label{rem:left_Maslov}
    When considering the operators on the left half line, Green's formula has a minus sign (see Eqn.~\eqref{eq:left_Green}). The proof is therefore similar up to a sign change, and we get
    \[
         \Sf(h^{\sharp,-}_t, E, \TT^1) = - \Mas \left( \ell_t^-(E), \ell_t^\sharp, \TT_1  \right) 
        = \cI(\ell_t^\sharp, \TT^1)  - \cI(\ell_t^-(E), \TT^1) .
    \]
\end{remark}

\subsection{Bulk/edge index}

Theorem~\ref{th:main-Hill} states that the spectral flow of the edge operator $h^\sharp_t$ can be seen as the sum of two contributions: the quantity $\cI(\ell_t^+(E), \TT^1)$ which only depends on the bulk operator, and the quantity $\cI(\ell_t^\sharp, \TT^1)$ which only depends on the choice of boundary conditions, so on the nature of the edge. Should we choose the same boundary conditions for all operators $h^\sharp_t$, as it is usually the case, this spectral flow would only depend on the bulk quantity. 

So, although the spectral flow is related to edge modes, we emphasise that the index $\cI(\ell_t^+(E), \TT^1)$ really is a bulk quantity! This motivates the following definition.
\begin{definition}[Bulk/edge index] \label{def:bulk_edge}
    We define the {\em bulk/edge index} of the family of {\bf bulk} operators $(h_t)_{t \in \TT^1}$ at energy $E \notin \sigma(h_t)$ as the spectral flow of its (right) Dirichlet {\bf edge} restriction:
    \[
        \boxed{ \cI(h_t, E) := \Sf \left( h_{t, D}^{\sharp, +}, E, \TT^1 \right).}
    \]
\end{definition}
Note that we also have $\cI(h_t, E) = {\rm Mas} (\ell_t^+(E), \ell_D, \TT^1) = \cI(\ell_t^+(E), \TT^1)$ defined in Definition~\ref{def:I}. However, our definition of bulk/edge index does no rely on the notion of winding number, as was the case for $\cI(\ell_t^+(E), \TT^1)$. This definition will therefore works in the infinite dimensional PDE case, where there is no notion of winding number.

\begin{lemma} \label{lem:def:bulkedge_left}
    When considering the left Dirichlet edge restriction, we have
    \[
         \cI(h_t, E) = - \Sf \left( h_{t, D}^{\sharp, -}, E, \TT^1 \right) \quad =  \cI \left(\ell^-_t(E), \TT^1 \right).
    \]
\end{lemma}

\begin{proof}
    Since $E \notin \sigma(h_t)$  for all $t \in \TT^1$, Lemma~\ref{lem:E_eigenvalue_of_h} implies that 
    \[
        \forall t \in \TT^1, \quad \ell^+_t(E) \cap \ell^-_t(E) = \{ 0 \}.
    \]
    So the Lagrangian planes $\ell^+_t(E)$ and $\ell^-_t(E)$ never cross. In particular,
    \[
        \Mas \left( \ell^+_t(E), \ell^-_t(E), \TT^1 \right) = 0, \quad \text{and therefore} \quad
        \cI \left(\ell^+_t(E), \TT^1 \right) = \cI \left(\ell^-_t(E), \TT^1 \right).
    \]
    The proof then follows from the fact that $\cI(h_t, E) = \cI \left(\ell^+_t(E), \TT^1 \right)$ and Remark~\ref{rem:left_Maslov}.
\end{proof}

In Remark~\ref{rem:scattering}, we linked the unitary $\cU_t^+(E)$ to the reflection coefficient $R_t(k)$ with $k = \sqrt{E}$. Since they have similar Cayley transform (up to a multiplicative positive constant), the winding of $t \mapsto \cU_t^+(E)$ equals the one of $t \mapsto R_t(k)$. So our bulk/edge index is also the winding of the (determinant of the) reflection coefficient $R_t(k)$. The equality
\[
    {\rm Winding} \left( R_t(k), \TT^1 \right) = \Sf \left( h_{t, D}^\sharp, E, \TT^1 \right)
\]
can be interpreted as a weak (or integrated) form of Levinson's Theorem~\cite[Thm XI.59]{reed1980methods} (see also~\cite[Thm 6.11]{graf2013bulk}).


\subsection{Applications}
Let us give two applications of the previous theory. The first one shows that a spectral flow must appear when modifying Robin boundary conditions. The second one concerns the case of junctions between two Hill's operators.

\subsubsection{Robin boundary conditions}

\label{ssec:example_Robin}
In the case $n = 1$, consider a fixed (independent of $t$) bounded  potential $V_t(x) = V(x)$. We consider the self-adjoint Robin operators $h^\sharp_t = -\partial_{xx}^2 + V$ on $L^2(\R^+)$, with the $t$-dependent domain
\[
\cD_t := \left\{ \psi \in H^2(\R^+), \quad \sin(\pi t) \psi(0) - \cos(\pi t) \psi'(0) = 0      \right\}.
\]
We have $\cD_{t + 1} = \cD_{t}$, so $H_t^\sharp$ is $1$-periodic in $t$. For $t = 0$, we recover Dirichlet boundary conditions, and for $t = \frac12$, we recover Neumann boundary conditions, so Robin boundary conditions interpolates between these two cases. The Lagrangian plane of $\cH_b = \C \times \C$ corresponding to the extension $\cD_t$ is
\[
\ell_t^\sharp = {\rm Vect}_\C \begin{pmatrix}
\cos(\pi t) \\ \sin( \pi t)
\end{pmatrix} \subset \C \times \C.
\]
It is of the form $ \ell_t^\sharp \{ (\Theta x, \Pi x), \ x \in \C\}$ for $\Theta = \cos(\pi t)$ and $\Pi = \sin(\pi t)$. So, by the results of Example~\ref{ex:Robin}, the corresponding unitary $\cU(t) \in \U(1) \approx \SS^1$, is
\[
    \cU(t) = \dfrac{\cos(\pi t) + \ri \sin(\pi t)}{\cos(\pi t) - \ri \sin(\pi t)} = \re^{2 \ri \pi t}.
\]
We see that $\cU(t)$ winds once positively around $\SS^1$ as $t$ runs through $\TT^1$, that is
\[
    \cI(\ell_t^\sharp, \TT_1) = 1.
\]
Using Theorem~\ref{th:main-Hill}, and the fact that $\ell^+_t(E)$ is independent of $t$, we obtain
\[
    \Sf \left( h_t^\sharp, E, \TT^1  \right) = - 1.
\] 
We deduce that there is a spectral flow of exactly $1$ eigenvalue going upwards in all spectral gaps of $h$. This includes the lower gap $(-\infty, \inf \sigma(h))$.


\subsubsection{Junction between two materials.}
\label{ssec:junction}
We now consider a junction between a left and a right potentials $V_{L, t}$ and $V_{R, t}$, where $t \mapsto V_{L,t}$ and $t \mapsto V_{R, t}$ are two periodic continuously differentiable families of potentials in $L^\infty(\R, \cS_n)$. Take $\chi$ a bounded switch function, satisfying, for some $X > 0$,
\[
\forall x \le -X, \quad \chi(x) = 1, \quad \text{and} \quad \forall x \ge X, \quad \chi(x) = 0.
\]
We consider the domain wall Hill's operators
\[
h^\chi_t := - \partial_{xx}^2 + V_{L,t}(x) \chi(x) + V_{R, t}(x) (1 - \chi(x)).
\]
Let $E \in \R$ be in the resolvent set of the bulk operators $h_{R,t}$ and $h_{L,t}$ for all $t \in \TT^1$. Again, some eigenvalues of $h^\chi_t$ might cross $E$ as $t$ goes from $0$ to $1$, and we can define a corresponding spectral flow $\Sf(h_t^\chi, E, \TT^1)$. 

\begin{theorem}[Junctions between two channels] \label{th:bec_hill}
    With the previous notation, let $(a,b) \subset \R \setminus \{ \sigma \left( h_{R, t}, \TT^1 \right) \cup \sigma \left( h_{L, t}, \TT^1 \right) \}$. Then,
    \begin{itemize}
        \item almost any $E \in (a,b)$ is a regular energy for $h_t^\chi$;
        \item for any such regular energy, we have
        \begin{align*}
            \Sf(h_t^\chi, E, \TT^1) = \cI(h_{R, t}, E) -  \cI(h_{L, t}, E).
         \end{align*}
    \end{itemize}
\end{theorem}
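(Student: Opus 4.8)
The plan is to treat the full-line domain-wall operator $h^\chi_t$ as a bulk operator whose edge modes at $E$ are detected by the crossing of two Cauchy-solution planes, mirroring the proof of Theorem~\ref{th:main-Hill}, and then to strip off the switch function by a homotopy. First I record well-definedness. Since $\chi(x)=1$ for $x\le -X$ and $\chi(x)=0$ for $x\ge X$, the potential of $h^\chi_t$ equals $V_{L,t}$ near $-\infty$ and $V_{R,t}$ near $+\infty$, so by the decomposition principle for the essential spectrum one has $\sigma_{\mathrm{ess}}(h^\chi_t)\subset\sigma(h_{L,t})\cup\sigma(h_{R,t})$, which avoids the gap $(a,b)$; hence only isolated finite-multiplicity eigenvalues can cross $E$, the spectral flow $\Sf(h^\chi_t,E,\TT^1)$ is well defined, and the ``almost all $E$ is regular'' claim follows from Sard's lemma as in~\cite[Lemma III.18]{Gontier2020edge}. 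Let $\ell^{+,\chi}_t(E)$ and $\ell^{-,\chi}_t(E)$ be the planes of traces at $x=0$ of the $L^2(\R^+)$, resp.\ $L^2(\R^-)$, solutions of $(h^\chi_t-E)\psi=0$; because these potentials admit extensions with $E$ in the resolvent set (their tails are $V_{R,t}$, resp.\ $V_{L,t}$), the remark after Theorem~\ref{th:Lagrangian_planes_Hill_lE} guarantees both are Lagrangian planes of $\cH_b$, and the junction analog of Lemma~\ref{lem:E_eigenvalue_of_h} identifies $\Ker(h^\chi_t-E)$ with $\ell^{+,\chi}_t(E)\cap\ell^{-,\chi}_t(E)$.

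The core is a crossing-form computation exactly parallel to that of Theorem~\ref{th:main-Hill}, but now with two Cauchy branches instead of one Cauchy branch and one fixed boundary plane. Fix a regular $E$ and a crossing $t^*$ with $\dim\Ker(h^\chi_{t^*}-E)=k$, eigenvectors $\psi_1,\dots,\psi_k$ and branches $\lambda_1,\dots,\lambda_k$. Writing $u_j:=\Tr(\psi_j)\in\ell^{+,\chi}_{t^*}(E)\cap\ell^{-,\chi}_{t^*}(E)$ and introducing fixed-energy families $\phi_j^\pm(t)\in\cS^{\pm,\chi}(t,E)$ with $\phi_j^\pm(t^*)=\1_{\R^\pm}\psi_j(t^*)$, I differentiate $\langle\phi_i^\pm,(h^{\sharp,\pm,\chi}_{t,\mathrm{max}}-E)\phi_j^\pm\rangle=0$ and move the operator across by Green's identity. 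On $\R^+$ this gives $\langle\phi_i^+,(\partial_tV^\chi)\phi_j^+\rangle=-\omega(u_i,(\partial_tP^{+,\chi})u_j)$, while on $\R^-$ the orientation sign of the left Green's identity gives $\langle\phi_i^-,(\partial_tV^\chi)\phi_j^-\rangle=+\omega(u_i,(\partial_tP^{-,\chi})u_j)$. Since $h^\chi_t$ is essentially self-adjoint on the fixed domain $H^2(\R,\C^n)$, Hellmann--Feynman reads $\delta_{ij}\lambda_j'(t^*)=\langle\psi_i,(\partial_tV^\chi)\psi_j\rangle_{\cH}$; splitting this over $\R^\pm$ and using $\psi^\pm(t^*)=\phi^\pm(t^*)$ yields $\delta_{ij}\lambda_j'(t^*)=-b(u_i,u_j)$, where $b$ is the crossing form of the pair $(\ell^{+,\chi},\ell^{-,\chi})$ from~\eqref{eq:def:sesquilinearForm}. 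Tracking signs exactly as in Theorem~\ref{th:main-Hill} then gives $\Sf(h^\chi_t,E,\TT^1)=\Mas(\ell^{+,\chi}_t(E),\ell^{-,\chi}_t(E),\TT^1)$ for the prescribed $\chi$.

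It remains to identify these switch-dependent planes with $\ell^+_{R,t}(E)$ and $\ell^-_{L,t}(E)$, which I do by homotopy. Any admissible $\chi$ is joined to the sharp step $\chi_\infty:=\1_{(-\infty,0)}$ by the affine path $\chi_s:=(1-s)\chi+s\chi_\infty$, which keeps the tails (hence $\sigma_{\mathrm{ess}}$ away from $(a,b)$) fixed, stays valued in $[0,1]$, and is continuous in $s$ for the $L^\infty$ norm. Both $\Sf(h^{\chi_s}_t,E,\TT^1)$ and $\Mas(\ell^{+,\chi_s}_t(E),\ell^{-,\chi_s}_t(E),\TT^1)$ are homotopy invariants of the loop in $s$ (the two planes remaining Lagrangian throughout), hence constant; at $s=1$ the potential of $h^{\chi_\infty}_t$ equals $V_{R,t}$ on $\R^+$ and $V_{L,t}$ on $\R^-$, so $\ell^{+,\chi_\infty}_t(E)=\ell^+_{R,t}(E)$ and $\ell^{-,\chi_\infty}_t(E)=\ell^-_{L,t}(E)$. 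This yields the first equality, and the second follows at once from the splitting Lemma~\ref{lem:splitMaslov}, valid because here $\cH_b=\C^n\times\C^n$.

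The main obstacle is the crossing-form computation of the second paragraph: one must handle the two-sided splitting carefully and correctly absorb the orientation-induced sign of Green's identity on $\R^-$, then verify that the full-line Hellmann--Feynman derivative of the junction eigenvalue decomposes precisely into the \emph{difference} of the two half-line crossing contributions, i.e.\ into $b$. A secondary point is justifying homotopy invariance of $\Sf$ and $\Mas$ along the merely $L^\infty$-continuous path $\chi_s$, for which the stability of $\sigma_{\mathrm{ess}}$ away from $(a,b)$ (guaranteed by the fixed tails) is the decisive input.
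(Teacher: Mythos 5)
Your proposal is correct and follows essentially the same route as the paper's proof: identify $\Sf(h^\chi_t,E,\TT^1)$ with $\Mas\left(\ell^{+,\chi}_t(E),\ell^{-,\chi}_t(E),\TT^1\right)$ via the two-sided crossing-form computation adapted from Theorem~\ref{th:main-Hill}, then use homotopy invariance of the Maslov index to replace the junction planes by the pure-material ones, and conclude with the splitting Lemma~\ref{lem:splitMaslov}. The only (minor) difference lies in the homotopy of the last step --- you deform the switch $\chi$ to the sharp step $\1(x<0)$ keeping the cut at $x_0=0$, whereas the paper keeps $\chi$ fixed and slides the cut location $x_0$ from $0$ to $\pm X$ --- and both variants rest on the same implicit fact (stated rather tersely in your proposal and in the paper alike) that the symplectic transfer matrix of the ODE keeps the junction Cauchy planes Lagrangian and continuous along the deformation.
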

In particular, this spectral flow is independent of the switch $\chi$.

\begin{proof}
    Let us denote by $\ell^\pm_{\chi, t}(x_0, E)$ the Lagrangian planes obtained with the potential
    \[
        V^\chi_t(x) := V_{L,t}(x) \chi(x) + V_{R, t}(x) (1 - \chi(x)),
    \]
    and when the real line $\R$ is cut at the location $x_0 \in \R$. By Lemma~\ref{lem:E_eigenvalue_of_h}, we have
    \[
       \forall x_0 \in \R, \quad  \dim \Ker \left( h_t^\chi - E \right) = \dim \left( \ell_{\chi, t}^+(x_0, E) \cap \ell_{\chi, t}^-(x_0, E) \right).
    \]  
    Adapting the proof of Theorem~\ref{th:main-Hill} shows that
    \begin{align*}
        \Sf\left(h_t^\chi, E, \TT^1 \right) & = \Mas \left( \ell^+_{\chi, t} (x_0, E), \ell^-_{\chi, t} (x_0, E), \TT^1 \right)
         \\
         & = \cI \left(   \ell^+_{\chi, t} (x_0, E), \TT^1 \right) - \cI \left(   \ell^-_{\chi, t} (x_0, E), \TT^1 \right) .
    \end{align*}
    Since $V$ is uniformly (hence locally) bounded, all Cauchy solutions to $-\psi'' + V \psi = E \psi$ are well defined and continuously differentiable on the whole line $\R$. This implies that the maps $x_0 \mapsto \ell^\pm_{\chi, t}(x_0, E)$ are also continuous. In particular, since the index depends only on the homotopy class of the loops, it is independent of $x_0 \in \R$. So
    \begin{align*}
         \cI\left( \ell^+_{\chi, t} (x_0, E), \TT^1 \right) & = \cI \left(  \ell^+_{\chi, t} (X, E), \TT^1 \right) = \cI \left( \ell_{R, t}^+(X, E), \TT^1  \right) \\
         & = \cI \left( \ell_{R, t}^+( E), \TT^1  \right) = \cI(h_{R, t}^+, E).
    \end{align*}
    For the middle equality, we used that $\ell_{\chi, t}^+(X, E)$ only involves the half space $\{ x \ge X \}$, where we have $V^\chi_t(x) = V_{R, t}(x)$. The proof for the left-hand side is similar, and the result follows from our definition of the bulk/edge index and Lemma~\ref{lem:def:bulkedge_left}.
\end{proof}


\section{The Schrödinger case}
\label{sec:Schrodinger}

In this section, we focus on the PDE Schrödinger case. We chose to put this section separately, since it introduces some technical details, and since the results are slightly different. 

\subsection{Schrödinger operators on a tube}

We consider systems defined on a $d$-dimensional cylinder of the form
\[
\Omega := \R \times \Gamma \subset \R^d,
\]
where $\Gamma = (0, 1)^{d-1}$ is the $(d-1)$-dimensional unit open square. A point in $\Omega$ is denoted by $\bx = (x, \by)$ with $x \in \R$ and $\by \in \Gamma$.

Let $V : \Omega \to \R$ be a {\em real-valued} potential, which we assume to be bounded on $\Omega$. We consider bulk Schrödinger operators  $H$ of the form
\[
H := - \Delta + V, \quad \text{acting on} \quad \cH := L^2(\Omega, \C).
\]
Again, we do not assume here that $V$ is periodic, but only that $V$ is bounded. 

The operator $H$ with core domain $C^\infty_0(\Omega)$ is symmetric, and we have
\[
\cD_{\rm min} = H^2_0(\Omega, \C), \quad \text{and} \quad
\cD_{\rm max} = H^2(\Omega, \C).
\]
This time, the bulk operator is not self-adjoint, and indeed, boundary conditions must be chosen on the boundary of the tube $\partial \Omega = \R \times \partial \Gamma$.

\subsubsection{The bulk Schrödinger operators}
\label{ssec:bulk_schrodinger}

For the sake of simplicity, we consider periodic boundary conditions. Our results hold for other boundary conditions, such as Dirichlet or Neumann, but the construction of the domains are a bit more technical. So we rather see $\Gamma$ as the torus
\[
    \Gamma := \TT^{d-1}, \quad \text{so that} \quad \Omega := \R \times \TT^{d-1}.
\]
With this definition, $\Omega$ has no boundaries: $\partial \Omega = \emptyset$, and we have $\cD_{\rm min} = \cD_{\rm max} = H^2(\Omega, \C)$. The bulk operator $H$ is now self-adjoint (corresponding to the periodic self-adjoint extension). 

For $\bk \in \ZZ^{d-1}$, we introduce the $\bk$-th Fourier mode $e_\bk(\by) := \re^{ \ri 2 \pi \bk \cdot \by}$. The elements in $\cH$ can be written in the partial Fourier form
\begin{equation} \label{eq:form_f}
    f(x, \by) = \sum_{\bk \in \ZZ^{d-1}} f_\bk(x) e_\bk(\by), \quad \text{with} \quad
    \| f \|_{\cH}^2 := \sum_{\bk \in \ZZ^{d -1}} \| f_\bk \|_{L^2(\R)}^2 < \infty.
\end{equation}
A function $f \in \cH$ is in the bulk domain $\cD := H^2(\Omega, \C)$ if $\| (-\Delta) f \|_{\cH}  < \infty$ as well, where
\[
    \| (-\Delta) f \|_{\cH}^2 = \sum_{\bk \in \ZZ^{d-1}} \left( \| f_\bk'' \|_{L^2(\R)}^2 + (4 \pi | \bk |^2)^2  \| f_\bk \|_{L^2(\R)}^2 \right) < \infty.
\]

\subsubsection{Edge Schrödinger operators on a tube}

We now define the edge Schrödinger operator
\[
H^\sharp := - \Delta + V \quad \text{acting on} \quad L^2(\Omega^+, \C), 
\quad \text{where} \quad
\Omega^+ := \R^+ \times \TT^{d-1}.
\]
This operator acts on the right half tube. We sometime write $H^{\sharp, +}$ for $H^\sharp$ and define $H^{\sharp, -}$ for the corresponding operator on the left half tube $\Omega^- := \R^- \times \TT^{d-1}$. The operator $H^\sharp$ with core domain $C^\infty_0(\Omega^+)$ is symmetric, and we have, 
\[
    \cD_{\rm min}^\sharp = H^2_0(\Omega^+, \C), \quad \text{and} \quad
    \cD_{\rm max}^\sharp = \left\{ \psi \in L^2(\Omega^+, \C), \ (- \Delta + V) \psi \in L^2(\Omega^+, \C) \right\},
\]
where the expression $(- \Delta + V) \psi$ must be understood in the distributional sense. Again, we need to specify the boundary conditions at $\partial \Omega^+ = \{ 0 \} \times \TT^{d-1}$. 

We stress out that the inclusion $H^2(\Omega^+, \C) \subset \cD_{\rm max}^\sharp$ is strict. This makes the PDE setting more tedious to describe. In this section, we focus on domains $\cD^\sharp$ which are included in $H^2(\Omega^+, \C)$ (this includes the Dirichlet and Neumann extensions). This case is well suited to study junctions, and is much simpler than the general case (with domains in $\cD_{\rm max}^\sharp$). It can be studied as for the Hill's case. We discuss the general case of domains $\cD^\sharp \subset \cD_{\rm max}^\sharp$ later in Section~\ref{sec:discussion_boundary_space}. It is based on a regularised version of Green's identity, and is well suited to study half-systems. However, the general setting is not appropriate to study junctions.

\medskip

The key ingredient in the case $\cD^\sharp \subset H^2(\Omega^+, \C)$ is the following.

\begin{lemma} \label{lem:restriction}
    A function is in $H^2(\Omega^+, \C)$ iff it is the restriction to $\Omega^+$ of an element in the bulk domain $\cD = H^2(\Omega, \C)$.
\end{lemma}

\begin{proof}
    This follows from the fact that there is an extension operator $H^2(\Omega^+) \to H^2(\Omega)$ which can be constructed with reflection operators, see {\em e.g.}~\cite[Theorem 7.25]{gilbarg2015elliptic} or~\cite[Theorem 8.1]{lions_magenes_1}. These reflection operators keep the periodic properties in the last $(d-1)$-directions.
\end{proof}

\subsection{Trace maps, and the boundary space $\cH_b$}

In order to express the second Green's identity in this setting, we recall some basic facts on the Dirichlet and Neumann trace operators. 

\subsubsection{Boundary Sobolev-like spaces}

Recall that $\Gamma = \TT^{d-1}$ is the boundary of $\Omega^+$. For $s \in \R$, we introduce the usual Hilbert spaces $H^s(\Gamma)$, with inner product 
\[
\bra f, g \ket_{H^s(\Gamma)} := \sum_{\bk \in \Z^{d-1}} \overline{f_\bk} g_\bk \left( 1 + (4 \pi | \bk |)^2 \right)^s,
\]
where we introduced the Fourier coefficients
\[
    f_\bk = \int_{\TT^{d-1}} f(\by ) \re^{ \ri 2 \pi \bk \cdot \by} \rd \by.
\]
We have $L^2(\Gamma) = H^{s=0}(\gamma)$, and for $s \ge 0$, $H^{-s}(\Gamma)$ is the dual of $H^s(\Gamma)$ for the $L^2(\Gamma)$-inner product. For $s' < s$, we have $H^s(\Gamma) \hookrightarrow H^{s'}(\Gamma)$ with compact embedding, and that $H^s(\Gamma)$ is dense in $H^{s'}(\Gamma)$. 

\subsubsection{Dirichlet and Neumann trace operators}

For $\psi \in C^\infty (\Omega^+)$, we introduce the functions $\gamma^D \psi$ and $\gamma^N \psi$ defined on $\Gamma$ by
\[
    \forall \by \in \Gamma, \quad  (\gamma^D \psi) (\by) :=  \psi( x= 0, \by), \quad \left( \gamma^N \psi \right) (\by) = \partial_x \psi(x = 0, \by).
\]
Our definition differs from the usual one $\gamma^N \psi = - \partial_x \psi(0, \cdot)$, where the minus sign comes from the outward normal direction of $\Gamma$ from the $\Omega^+$ perspective. Our definition without the minus sign matches the one of the previous section. Finally, we define the trace map
\begin{equation} \label{eq:def:Tr_S}
    \Tr(\psi)= \left( \gamma^D \psi, \gamma^N \psi  \right).
\end{equation}
It is classical that $\Tr$ can be extended as a bounded operator from $H^2(\Omega^+)$ to $H^{3/2}(\Gamma) \times H^{1/2}(\Gamma)$ (see for instance~\cite[Theorem 8.3]{lions_magenes_1}). This suggests to introduce the boundary space
\[
    \cH_b := H^{3/2}(\Gamma) \times H^{1/2}(\Gamma).
\]

The second Green's identity in the PDE case reads as follows.

\begin{lemma}[Second Green's formula] \label{lem:Green_PDE}
    For all  $\phi, \psi \in H^2(\Omega^+)$,
    \[
         \bra \phi, H^\sharp_{\rm max} \psi \ket_{\cH^\sharp} -  \bra  H^\sharp_{\rm max} \phi, \psi \ket_{\cH^\sharp}
          = \bra \gamma^D \phi, \gamma^N \psi \ket_{L^2(\Gamma)} - \bra \gamma^N \phi, \gamma^D \psi \ket_{L^2(\Gamma)}.
    \]
\end{lemma}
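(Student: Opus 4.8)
The plan is to reduce the $d$-dimensional identity to the one-dimensional Green's formula of Lemma~\ref{lem:second_Green}, applied separately in each transverse Fourier mode. First I would dispose of the potential: since $V$ is real-valued and bounded, multiplication by $V$ is a bounded self-adjoint operator on $\cH^\sharp$, so $\bra \phi, V\psi \ket_{\cH^\sharp} = \bra V\phi, \psi \ket_{\cH^\sharp}$, and the two potential contributions cancel in the commutator on the left-hand side. It therefore suffices to prove the identity with the pure Laplacian $-\Delta$ in place of $H^\sharp_{\per, {\rm max}}$.

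For the Laplacian I would use the partial Fourier decomposition of Section~\ref{ssec:bulk_schrodinger}. Writing $\phi = \sum_{\bk} \phi_\bk(x) e_\bk(\by)$ and $\psi = \sum_{\bk} \psi_\bk(x) e_\bk(\by)$, the hypothesis $\phi, \psi \in \cD^\sharp_{\per, {\rm max}} = H^2(\R^+ \times \TT^{d-1})$ guarantees that each $\phi_\bk, \psi_\bk \in H^2(\R^+, \C)$ (so that their values and derivatives at $x = 0$ are well-defined by the one-dimensional Sobolev embedding), and that $-\Delta \psi = \sum_\bk \left( -\psi_\bk'' + \lambda_\bk \psi_\bk \right) e_\bk$ with $\lambda_\bk = 4\pi |\bk|^2 \in \R$. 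Since $\{ e_\bk \}$ is orthonormal in $L^2(\Gamma)$ and both $\phi$ and $-\Delta \psi$ lie in $\cH^\sharp = L^2(\R^+) \otimes L^2(\Gamma)$, Fubini's theorem together with Parseval's identity gives
\[
\bra \phi, -\Delta \psi \ket_{\cH^\sharp} - \bra -\Delta \phi, \psi \ket_{\cH^\sharp} = \sum_{\bk \in \Z^{d-1}} \left[ \bra \phi_\bk, (-\partial_{xx}^2 + \lambda_\bk) \psi_\bk \ket_{L^2(\R^+)} - \bra (-\partial_{xx}^2 + \lambda_\bk) \phi_\bk, \psi_\bk \ket_{L^2(\R^+)} \right].
\]
Because each $\lambda_\bk$ is real it cancels termwise, and what remains is exactly the one-dimensional commutator governed by Lemma~\ref{lem:second_Green} (with $n = 1$ and zero potential):
\[
\bra \phi_\bk, -\partial_{xx}^2 \psi_\bk \ket_{L^2(\R^+)} - \bra -\partial_{xx}^2 \phi_\bk, \psi_\bk \ket_{L^2(\R^+)} = \overline{\phi_\bk(0)}\, \psi_\bk'(0) - \overline{\phi_\bk'(0)}\, \psi_\bk(0).
\]
Resumming and using $\gamma^D \phi = \sum_\bk \phi_\bk(0) e_\bk$, $\gamma^N \psi = \sum_\bk \psi_\bk'(0) e_\bk$ (and likewise for the other traces), Parseval in $L^2(\Gamma)$ identifies $\sum_\bk \overline{\phi_\bk(0)}\psi_\bk'(0) = \bra \gamma^D \phi, \gamma^N \psi \ket_{L^2(\Gamma)}$ and $\sum_\bk \overline{\phi_\bk'(0)}\psi_\bk(0) = \bra \gamma^N \phi, \gamma^D \psi \ket_{L^2(\Gamma)}$, which is precisely the claimed formula.

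The step requiring the most care is the convergence of these series and the legitimacy of the termwise manipulations, and this is where I expect the only real work to lie. The two boundary series converge absolutely because all four traces belong to $L^2(\Gamma)$: by Cauchy--Schwarz, for instance $\sum_\bk |\phi_\bk(0)|\,|\psi_\bk'(0)| \le \| \gamma^D \phi \|_{L^2(\Gamma)} \| \gamma^N \psi \|_{L^2(\Gamma)} < \infty$, and Parseval then identifies the limit; the interchange of the $\by$-integration with the $\bk$-sum on the left-hand side is justified by the tensor-product structure of $\cH^\sharp$ and the fact that $\phi, \psi \in H^2$. An equivalent and perhaps more transparent route, which I would note as an alternative, is to prove the identity first for functions that are smooth, compactly supported in $x$, and smooth on the torus $\TT^{d-1}$ — which, crucially, has no lateral boundary, so a direct integration by parts produces only the cut contribution at $x = 0$ with outward normal $-\partial_x$ — and then pass to general $\phi, \psi \in H^2(\Omega^+)$ by density, using the continuity of both sides in the graph norm $\| \cdot \|_\sharp$ and the continuity of $\gamma^D, \gamma^N$ into $L^2(\Gamma)$.
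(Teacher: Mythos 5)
The paper does not actually prove this lemma: it is stated as a classical identity (just as its one-dimensional counterpart, Lemma~\ref{lem:second_Green}, is stated without proof), with the surrounding text citing the standard trace theorems. So there is no paper proof to compare against, and your argument should be judged on its own merits: it is correct, and it is moreover well adapted to the paper's setting, since the partial Fourier decomposition you use is exactly the structure the paper sets up for $-\Delta_\per = (-\partial_{xx}^2)\otimes(-\Delta_{\Gamma,\per})$ and reuses in its proof that $\Tr$ is onto.

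All the essential steps check out. Discarding $V$ is legitimate because multiplication by a bounded real-valued $V$ is bounded self-adjoint on $\cH^\sharp$. Each coefficient $\phi_\bk$, $\psi_\bk$ indeed lies in $H^2(\R^+,\C)$, with $\sum_\bk \left( \|\phi_\bk\|_{L^2}^2 + \|\phi_\bk'\|_{L^2}^2 + \|\phi_\bk''\|_{L^2}^2 \right) \le \|\phi\|_{H^2}^2$, so the one-dimensional Green identity applies mode by mode, and your Cauchy--Schwarz bounds give the absolute convergence needed to resum. The one point you treat slightly quickly is the identification $(\gamma^D\phi)_\bk = \phi_\bk(0)$ and $(\gamma^N\psi)_\bk = \psi_\bk'(0)$, i.e.\ that taking the trace commutes with taking transverse Fourier coefficients; this follows from the continuity of $x \mapsto \phi(x,\cdot)$ from $[0,\infty)$ into $L^2(\Gamma)$ for $\phi \in H^2(\Omega^+)$ (or from the density argument you sketch), and your closing remark shows you are aware this is where the justification lives. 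Your alternative route --- prove the identity for functions smooth on $\R^+\times\TT^{d-1}$ and compactly supported in $x$, where integration by parts meets no lateral boundary, then extend by density using the continuity of both sides in the $H^2$ norm and of $\gamma^D,\gamma^N$ into $L^2(\Gamma)$ --- is equally valid and is the more standard textbook proof; either version would serve as a complete proof of the lemma.
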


Introducing the symplectic form $\omega$ on $\cH_b$, defined by
\[
    \forall (f,f'), (g,g') \in \cH_b, \quad \omega( (f,f'), (g,g')) := \bra f, g' \ket_{L^2(\Gamma)} - \bra f', g \ket_{L^2(\Gamma)},
\]
the second Green's identity takes the form
\[
      \forall \phi, \psi \in H^2(\Omega^+), \quad \bra \phi, H^\sharp_{\rm max} \psi \ket_{\cH^\sharp} -  \bra  H^\sharp_{\rm max} \phi, \psi \ket_{\cH^\sharp} = \omega \left( \Tr(\phi), \Tr(\psi)  \right).
\]

\begin{remark} \label{rem:cHb'}
    The symplectic Hilbert space $(\cH_b, \omega)$ {\bf does not} satisfy Assumption~A. Introducing the map $A : H^{1/2}(\Gamma) \to H^{3/2}(\Gamma)$ so that
    \[
        \forall f \in H^{3/2}(\Gamma), \ \forall g \in H^{1/2}(\Gamma), 
        \quad
        \bra f, g \ket_{L^2(\Gamma)} = \bra f, Ag \ket_{H^{3/2}} = \bra A^* f, g \ket_{H^{1/2}},
    \]
    we have $J = \begin{pmatrix}
        0 & A^* \\ -A & 0
    \end{pmatrix}$, but the operators $A$ and $A^*$ are compact (hence $J$ as well, and $J^2 \neq - \bbI_{\cH_b}$). In particular, we cannot consider the unitaries $U$ nor $\cU$. Such situation, called {\em weak} symplectic spaces, was studied in~\cite{booss2013maslov}.
\end{remark}


\begin{lemma}
    The map $\Tr : \left(  H^2(\Omega^+), \| \cdot \|_{H^2} \right) \to \cH_b$ is well-defined, continuous and onto.
\end{lemma}

\begin{proof}
    The fact that $\Tr$ is well-defined and continuous follows from the continuity of the trace maps. To prove that $\Tr$ is onto, one can adapt the proof of~\cite[Theorem 8.3]{lions_magenes_1}. We provide here an alternative short proof.
    
    Let $f \in H^{3/2}(\Gamma)$ and $f' \in H^{1/2}(\Gamma)$ with respective coefficients $(f_\bk)$ and $(f_\bk')$. Consider also a smooth cut-off function $\chi(x)$ with $\chi(x) = 1$ for $0 \le x < 1/2$, $\chi(x) = 0$ for $ x > 2$ and $\int_{\R^+} \chi^2 = 1$. We set $\chi_\bnull = \chi$, and, for $\bk \in \Z^{d-1} \setminus \{ 0\}$,
    \[
    \chi_\bk(x) := \chi\left(  | \bk | x \right).
    \]
    For all $\bk$, the function $\chi_\bk$ is smooth, compactly supported, with $\chi_\bk(x) = 1$ for all $x < | \bk |/2$. In addition, we have the scalings
    \[
    \int_{\R^+} | \chi_\bk |^2 = \dfrac{1}{| \bk |}, \quad 
    \int_{\R^+} | \chi_\bk' |^2 =| \bk | \int_{\R+} | \chi' |^2 , \quad
    \int_{\R^+} | \chi_\bk'' |^2 = | \bk |^{3} \int_{\R^+} | \chi'' |^2.
    \]
    We now consider the function $\Psi$ defined on $\Omega^+$ by
    \[
    \Psi(x, \by) := \sum_{\bk \in \Z^{d-1}} \left( f_\bk + x f_\bk'  \right) \chi_\bk(x)e_\bk(\by).
    \]
    The function $\Psi$ is smooth with $\Tr(\Psi) = (f, f')$. It remains to check that $\Psi$ is in $H^2(\Omega^+)$. We have for instance
    \begin{align*}
    \|(- \Delta) \Psi \|_{L^2(\Omega^+)}^2 & \lesssim \int_{\R^+} \sum_{\bk \in \Z^{d-1}} \left( |f_\bk|^2 | \bk |^4 | \chi_\bk |^2 + | f_\bk |^2 | \chi_\bk'' |^2 + | f_\bk' |^2 | \chi_\bk' |^2 \right) \\
    & \lesssim \sum_{\bk \in \Z^{d-1}} | f_\bk |^2 \cdot | \bk |^{3} + \sum_{\bk \in \Z^{d-1}} | f_\bk' |^2 | \bk | \lesssim \| f \|_{H^{3/2}}^2 + \| f' \|_{H^{1/2}}^2.
    \end{align*}
    where we used our previous scalings for $\chi_\bk$. The $L^2$--norms of the other derivatives are controlled similarly.
\end{proof}

\subsubsection{Self-adjointness and Lagrangian spaces}
We now provide the counterparts of our previous results in the Schrödinger case. First, we have (compare with Theorem~\ref{th:self_adjoint_extensions_are_Lagrangian_planes})

\begin{theorem}\label{th:self_adjoint_extensions_are_Lagrangian_planes_Schrodinger}
    If $\cD^\sharp \subset H^2(\Omega^+)$ is a domain so that $(H^\sharp_{\rm max}, \cD^\sharp)$ is self-adjoint, then $\ell := \Tr(\cD^\sharp)$ is a Lagrangian plane of $\cH_b$.
\end{theorem}

\begin{proof}
    The proof follows the one of Theorem~\ref{th:self_adjoint_extensions_are_Lagrangian_planes}. First, the second Green's identity shows that
    \[
        \forall \psi, \phi \in \cD^\sharp, \quad 0 = \bra \psi, H^\sharp \phi \ket - \bra H^\sharp \psi , \phi \ket = \omega \left( \Tr(\psi), \Tr(\phi) \right),
    \]
    hence $\ell \subset \ell^\circ$. Conversely, if $\psi_0 \in H^2(\Omega^+)$ is such that $\Tr(\psi_0) \in \ell^\circ$, then for all $\phi \in \cD^\sharp$, we have
    \[
        0 = \omega \left( \Tr(\psi_0), \Tr(\phi) \right) = \bra \psi_0, H^\sharp \phi \ket - \bra H^\sharp \psi_0 , \phi \ket.
    \]
    In particular, the map $T_{\psi_0} : \phi \mapsto \bra \psi_0, H^\sharp \phi \ket = \bra H^\sharp \psi_0, \phi \ket$ is bounded, with $\| T_{\psi_0} \|_{\rm op} \le \| H^\sharp \psi_0 \|_{\cH^\sharp}$. So $\psi_0 \in (\cD^\sharp)^* = \cD^\sharp$. This proves that $\ell^\circ \subset \Tr(\cD^\sharp) \subset \ell$, hence $\ell = \ell^\circ$ is Lagrangian.
\end{proof}

Theorem~\ref{th:self_adjoint_extensions_are_Lagrangian_planes_Schrodinger} is a much weaker statement than Theorem~\ref{th:self_adjoint_extensions_are_Lagrangian_planes}, but is still enough for our purpose (in practice, the self-adjoint extensions are given). There is no longer a one-to-one correspondence between Lagrangian planes and self-adjoint extensions. One problem is that, for $\ell \subset \cH_b$, although $\Tr^{-1}(\ell)$ is included in $H^2(\Omega^+)$, its closure for the graph norm $\overline{\Tr^{-1}(\ell)}$ may no longer be included in $H^2(\Omega^+)$. We refer to~\cite[Example 4.22]{Behrndt_2014} for an example of such a situation.

\medskip

The problem of recovering a function $\psi \in \cD^\sharp$ from its boundary value $\Tr(\psi)$ is a well-known problem, often called ``boundary value problem'', which has been extensively studied in the literature. The modern tool for this problem is the notion of {\em boundary triples}~\cite{Behrndt_2007, Behrndt_2020}. In the terminology of the community, we have, in the Hill's case, that $(\cH_b := \C^{2n}, \Tr^D, \Tr^N)$ is an ordinary boundary triple, while in the Schrödinger case, $(\cH_b = H^{3/2}(\partial \Omega) \times H^{1/2}(\partial \Omega), \Tr^D, \Tr^N)$ is a {\em quasi}-boundary triple~\cite{Behrndt_2014}. Below in Section~\ref{sec:discussion_boundary_space}, we prove a one-to-one correspondence between all self-adjoint extensions and Lagrangian planes of another symplectic Hilbert space of the form $\widetilde{\cH_b} = H^{-1/2}(\Gamma) \times H^{1/2}(\Gamma)$. Unfortunately, this construction uses a regularization of the Neumann trace, introduced by Vishik~\cite{vishik1952general} and Grubb~\cite{grubb1968characterization}, and it is not well suited for the study of junctions, as discussed in Section~\ref{sec:discussion_boundary_space}.

\medskip

If $\cD^\sharp \subset H^2(\Omega^+)$ is a self-adjoint domain, and if $\ell^\sharp = \Tr(\cD^\sharp)$ is the corresponding Lagrangian plane, we denote by $(H^\sharp, \ell^\sharp)$ the self-adjoint extensions of $H^\sharp$ corresponding to this Lagrangian plane. Not all $\ell^\sharp \subset \Lambda(\cH_b)$ define a self-adjoint domain. As in the finite dimensional case, we define
\begin{equation} \label{eq:def:SE}
\cS^\pm(E) := {\rm Ker} \left( H^{\sharp, \pm}_{\rm max} - E \right) \cap H^2(\Omega^\pm), \quad \text{and} \quad
\ell^\pm(E) := \Tr \left( \cS^\pm(E) \right).
\end{equation}

The counterpart of Lemma~\ref{lem:E_eigenvalue_of_h} is the following.
\begin{lemma} \label{lem:E_eigenvalue_of_H}
    For the bulk operator $H$, we have 
    \[
    \forall E \in \R, \quad \dim \Ker \left(H - E\right) = \dim \left( \ell^+(E) \cap \ell^-(E) \right).
    \]
    In particular, $E$ is eigenvalue of $H$ iff $\ell^+(E) \cap \ell^-(E) \neq \{ 0 \}$.
\end{lemma}


\begin{proof}
    If $\psi \in \cD$ satisfies $(H - E) \psi = 0$, then, by Lemma~\ref{lem:restriction}, its restrictions $\psi^\pm := \1_{\R^\pm} \psi$ are in $H^2(\Omega^\pm)$. In addition, they satisfy $(H_{\rm max}^{\sharp} - E) \psi^\pm = 0$, so  $\psi^\pm \in \cS^\pm(E)$. Taking traces shows that $\Tr^+(\psi^+) = \Tr^-(\psi^-) \in \ell^+(E) \cap \ell^-(E)$.
    
    Conversely, let $\psi^\pm \in \cS^\pm(E)$ be such that $\Tr^+(\psi^+) = \Tr^-(\psi^-)$, and consider the function $\psi \in \cH$ defined by
    \[
    \psi(x,y) := \begin{cases}
        \psi^+(x,y) \quad \text{for} \quad x > 0, \\
        \psi^-(x,y) \quad \text{for} \quad x < 0.
    \end{cases}
    \]
    It is unclear yet that $\psi$ is regular enough ({\em i.e.} belongs to $\cD = H^2(\Omega)$). For $f \in \cD$, we have
    \begin{align*}
        \bra \psi, (H -E )  f \ket_{\cH}
        & = \bra \psi^+,  \1_{\R^+} (H -E )  f \ket_{\cH^+} + \bra \psi^-,  \1_{\R^-} (H -E )  f \ket_{\cH^-} \\
        & = \bra \psi^+,  (H_{\rm max}^{\sharp, +} - E)  f^+ \ket_{\cH^+} + \bra \psi^-,  (H_{\rm max}^{\sharp, -} - E)  f^- \ket_{\cH^-} \\
        & = \omega \left( \Tr^+(\psi^+), \Tr^+(f^+) \right) - \omega \left( \Tr^-(\psi^-), \Tr^-(f^-) \right) = 0.
    \end{align*}
    So $T_\psi : f \mapsto \bra \psi, H f \ket_{\cH} = E \bra \psi, f \ket_{\cH}$ is bounded on $\cD$. We first deduce that $\psi$ is in the domain $\cD^* = \cD$. In addition, we have $(H - E) \psi = 0$. So $\psi$ is an eigenvector for the eigenvalue~$E$.
\end{proof}

\begin{theorem} \label{th:Lagrangian_planes_Schrodinger_lE}
    For all $E \in \R \setminus \sigma(H)$, the sets $\ell^\pm(E)$ are Lagrangian planes of $\cH_b$, and
    \[
    \cH_b = \ell^+(E) \oplus \ell^-(E).
    \]    
\end{theorem} 

If $E \in \sigma(H)$, the planes $\ell^\pm(E)$ may not be Lagrangian (see Remark~\ref{rem:not-always-lagrangian-planes}).

\begin{proof}
    We first claim that for any $E \in \R$, $\ell^\pm(E)$ are isotropic spaces. Let $\phi, \psi \in \cS^+(E)$. By Green's identity, we have
    \[
    \omega( \Tr(\phi), \Tr(\psi)) = \bra \phi, H_{\rm max}^\sharp \psi \ket_{\cH^\sharp} - \bra H_{\rm max}^\sharp \phi, \psi \ket_{\cH^\sharp}
    = \bra \phi, E \psi \ket_{\cH^\sharp} - \bra E \phi, \psi \ket_{\cH^\sharp} = 0.
    \]
    In the last equality, we used that $E$ is real-valued. This proves that $\ell^+(E) \subset \ell^+(E)^\circ$. Similarly, we have $\ell^-(E) \subset \ell^-(E)^\circ$.
    
    \medskip
    
    We have (recall that $\cH = L^2(\Omega, \C^n)$)
    \[
    \cH = \cH^+ \oplus \cH^-, \quad \text{where} \quad \cH^\pm := \left\{ \psi \in \cH, \ \psi = 0 \ \text{on} \ \overline{\Omega^\mp} \right\}.
    \]
    Let $E \in \R \setminus \sigma(H)$, so that the bulk operator $(H - E)$ is invertible with $\cD = (H - E)^{-1} \cH$. This gives a decomposition
    \[
    \cD = \cD^+ \oplus \cD^-, \quad \cD^\pm := (H - E)^{-1} \cH^\pm,
    \]
    and, since $\Tr$ is onto,
    \[
    \cH_b = \Tr \left( \cD^+ \right) + \Tr \left(\cD^- \right).
    \]
    The elements $\psi \in \cD^+$ are such that $(- \Delta + V - E) \psi = f$, for some $f \in \cH$ with support contained in $\Omega^+$. In particular, the restriction of $\psi$ to $\Omega^-$, denoted by $\psi^-$, is in $H^2(\Omega^-)$ and satisfies $(H_{\rm max}^{\sharp, -} - E) \psi^- = 0$ on $\R^-$. So $\psi^- \in \cS^-(E)$. Taking boundary traces shows that
    \[
    \Tr \left( \cD^+ \right)  \subset \ell^-(E), 
    \quad \text{and, similarly, } \quad 
    \Tr \left( \cD^- \right)  \subset \ell^+(E).
    \]
    In particular, we have $\cH_b = \ell^+(E) + \ell^-(E)$. We conclude with Lemma~\ref{lem:useful_result_lagrangians}.
    
\end{proof}

Finally, the counterpart of Lemma~\ref{lem:dimKer=dimCap} is the following. We skip the proof for the sake of brevity. 
\begin{lemma}
    If $E \in \R \setminus \sigma ( H )$, then, 
    \[
    \dim \Ker \left(  H^\sharp - E \right) = \dim \left( \ell^+(E) \cap \ell^\sharp  \right).
    \]
\end{lemma}

In the finite dimensional Hill's case, for all extensions $(h^{\sharp, +}, \ell^\sharp)$ and $(h^{\sharp, -}, \ell^{\sharp})$ with the same Lagrangian plane $\ell^\sharp$, we have 
\[
    \sigma_{\rm ess}(h) = \sigma_{\rm ess}(h^{\sharp,+}) \cup \sigma_{\rm ess}(h^{\sharp,-}).
\] 
This is because boundary conditions always induce finite dimensional (hence compact) perturbations of the resolvents. In the Schrödinger case, we only have the inclusion
\[
    \sigma_{\rm ess} (H) \subset \sigma_{\rm ess}(H^{\sharp,+}) \cup  \sigma_{\rm ess}(H^{\sharp,-}),
\]
which comes from the fact that Weyl sequences for $H$ must escape to $\pm \infty$. However, the inclusion may be strict: in the infinite dimensional case, there are self-adjoint extensions of $H^\sharp$ which can create essential spectrum. The corresponding Weyl sequences localise near the cut. We give such an example below in Remark~\ref{rem:creation_essential_spectrum}. 

This makes bulk-boundary correspondence more subtle in the Schrödinger case: different self-adjoint extensions may give different results. For the usual extensions however, we prove that the result are independent of the choice (see the proof of Theorem~\ref{th:bec_schrodinger} below).


\subsubsection{Families of Schrödinger operators}
\label{ssec:families_Schrodinger}

We consider a family of Schrödinger operators of the form
\[
H_t := - \Delta + V_t, \quad \text{acting on} \quad \cH.
\]
We assume that $t \mapsto V_t$ is continuously differentiable from $\TT^1$ to $L^\infty(\Omega, \R)$. We also consider a family of (self-adjoint extensions of) edge operators $\left( H_t^\sharp, \ell_t^\sharp \right)$.

Let $E \in \R \setminus \sigma \left( H_t  \right)$. We say that $E$ is a {\bf regular energy} if, for all $t \in \TT^1$, the energy $E$ is not in the essential spectrum of $H^\sharp_t$. In particular, this implies $\dim \Ker \left( H^\sharp_t - E  \right)  = \dim \left( \ell_t^+(E) \cap \ell^\sharp_t \right) < \infty$. In addition, we require all corresponding crossings to be regular.

Noticing that the definition of the Maslov index in Section~\ref{ssec:Maslov} does not require Assumption A, we can apply the first part of the proof of Theorem~\ref{th:main-Hill} to the Schrödinger case, and we obtain the following.

\begin{theorem} \label{th:main_schrodinger}
    Let $(a, b) \subset \R$ be such that, for all $t \in \TT^1$,
    \[
    (a, b) \cap \sigma \left( H_t \right) = \emptyset \quad \text{and} \quad (a, b) \cap \sigma_{\rm ess} \left( H_t^\sharp \right) = \emptyset.
    \]
    Then, 
    \begin{itemize}
        \item almost any $E \in (a,b)$ is a regular energy;
        \item for such a regular energy, we have
        \begin{align*}
        \Sf \left( H_t^\sharp, E, \TT^1  \right) 
        & = \Mas \left( \ell^+_t(E), \ell^\sharp_t, \TT^1  \right).
        \end{align*}
    \end{itemize}
\end{theorem}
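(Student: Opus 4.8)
The plan is to transcribe the proof of Theorem~\ref{th:main-Hill} essentially verbatim, checking only that each step survives the passage to the infinite-dimensional boundary space $\cH_b = H^{3/2}_\per(\Gamma)\times H^{1/2}_\per(\Gamma)$. That proof used only four ingredients: the second Green's formula, the identification $\dim\Ker(H_t^\sharp-E)=\dim(\ell_t^+(E)\cap\ell_t^\sharp)$ supplied by Theorem~\ref{th:everything_Schrodinger}, standard perturbation theory for isolated eigenvalues, and the crossing-form definition of the Maslov index from Section~\ref{ssec:Maslov}. The key observation, already flagged by the author, is that this definition of $\Mas$ never invokes Assumption~A, so it remains available here even though $(\cH_b,\omega)$ fails it (Remark~\ref{rem:cHb'}); this is precisely why the conclusion stops at $\Sf=\Mas$ and cannot be split into a difference of winding numbers as in Lemma~\ref{lem:splitMaslov}. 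For the first bullet I would argue as in the Hill's case: the hypothesis $(a,b)\cap\sigma_{\rm ess}(H_t^\sharp)=\emptyset$ forces the spectrum of each $H_t^\sharp$ to be discrete on $(a,b)$, so the eigenvalues assemble into $C^1$ branches $\lambda_j(t)$ and the non-regular energies are their critical values, a null set by Sard's lemma exactly as in~\cite[Lemma III.18]{Gontier2020edge}.

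For the second bullet I would fix a regular energy $E$ and a crossing $t^*$ with $k:=\dim\Ker(H_{t^*}^\sharp-E)<\infty$ (finite precisely because $E\notin\sigma_{\rm ess}(H^\sharp_{t^*})$), and follow the two families of branches. Choosing a $C^1$ family of $\cH^\sharp$-orthonormal eigenfunctions $\psi_j(t)$ in the $t$-dependent domain $\Tr^{-1}(\ell_t^\sharp)$, with traces $u_j=\Tr(\psi_j)$, I would differentiate $\bra\psi_i,(H^\sharp_{t,{\rm max}}-\lambda_j)\psi_j\ket_{\cH^\sharp}=0$, move the operator $(H^\sharp_{t^*,{\rm max}}-E)$ to the other side with the PDE Green's formula, and obtain
\[
\delta_{ij}\lambda_j'(t^*)=\bra\psi_i,(\partial_t V_{t^*})\psi_j\ket_{\cH^\sharp}+\omega\!\left(u_i,(\partial_t P_t^\sharp)u_j\right)\big|_{t=t^*}.
\]
Then, using that the bulk resolvent $(H_t-E)^{-1}$ provides a $C^1$ solution operator $\ell_t^+(E)\to\cS^+(t,E)$, I would pick $C^1$ curves $\phi_j(t)\in\cS^+(t,E)=\Ker(H^{\sharp,+}_{\per,{\rm max}}-E)$ with $\phi_j(t^*)=\psi_j(t^*)$, set $v_j=\Tr(\phi_j)$ so $v_j(t^*)=u_j(t^*)$, and differentiate $\bra\phi_i,(H^\sharp_{t,{\rm max}}-E)\phi_j\ket=0$ to get $\bra\phi_i,(\partial_t V_{t^*})\phi_j\ket=-\omega(u_i,(\partial_t P_t^+)u_j)$. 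Since $\phi_j(t^*)=\psi_j(t^*)$, subtracting identifies the right-hand side as $-b(u_i,u_j)$ with $b$ the crossing form~\eqref{eq:def:sesquilinearForm} of the pair $(\ell^+(t),\ell_t^\sharp)$; thus $b$ is diagonal with eigenvalues $-\lambda_j'(t^*)$, and matching ${\rm deg}_{\Mas}(t^*)=\sum_j{\rm sgn}(-\lambda_j'(t^*))$ against the extra sign in the definition of $\Sf$ yields $\Mas(\ell^+(t),\ell_t^\sharp,\TT^1)=\Sf(H_t^\sharp,E,\TT^1)$ after summing over crossings.

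The one genuinely new difficulty, which is not present in the finite-dimensional Hill's setting, is regularity: I must justify that $t\mapsto\psi_j(t)$ is $C^1$ into the graph norm (equivalently $H^2(\Omega^+)$) despite the \emph{moving} domain $\Tr^{-1}(\ell_t^\sharp)$, and that continuity of $\Tr$ then promotes this to $u_j\in C^1(\TT^1,\cH_b)$, with the analogous statements for $\phi_j$ and $v_j$. The clean way is to straighten the domain by a $t$-dependent isomorphism built from the $C^1$ family of projectors $P_t^\sharp$ (which is $C^1$ in $\cB(\cH_b)$ by the standing hypothesis on the family of Lagrangians), thereby reducing to a fixed-domain analytic perturbation problem to which Kato's theory applies. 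I expect this moving boundary condition to be the main obstacle; once it is settled the computation above is formally identical to the Hill's case, so no further finite-dimensional input is needed.
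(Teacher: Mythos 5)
Your proposal is correct and follows essentially the same route as the paper: the paper's proof of Theorem~\ref{th:main_schrodinger} consists precisely of observing that the crossing-form definition of the Maslov index does not require Assumption~A and then re-running the proof of Theorem~\ref{th:main-Hill} (Green's formula, the two families of branches, the Hellman--Feynman-type identities, and the identification of the crossing form with eigenvalues $-\lambda_j'(t^*)$), none of which uses finite-dimensionality. Your extra care about $C^1$ regularity of eigenfunctions in the moving domain $\Tr^{-1}(\ell_t^\sharp)$ is a legitimate refinement the paper leaves implicit (note it is equally present in the Hill's case), but it does not change the argument.
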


The proof is similar to the one of Theorem~\ref{th:main-Hill}, by noticing that all crossings involve finite dimensional linear spaces. Since the symplectic space $(\cH_b, \omega)$ does not satisfy Assumption A, it is unclear whether one can interpret this last index as a spectral flow of unitaries. We postpone this question to Section~\ref{sec:discussion_boundary_space} below.

\subsection{Junctions for Schrödinger operators}
In Section~\ref{ssec:junction}, we proved that the spectral flow for the junctions of two Hill's operators is the difference between a right and a left contributions (the index splits). We prove a similar result for Schrödinger operators.

\subsubsection{Bulk/edge index} First we define our bulk/edge index. As in Definition~\ref{def:bulk_edge}, we define it as the spectral flow for the corresponding Dirichlet edge operator.
\begin{definition}[Bulk/edge index -bis]
    We define the {\em bulk/edge index} of the family of {\bf bulk} operators $(H_t)_{t \in \TT^1}$ at energy $E \notin \sigma (H_t)$ as the spectral flow of its (right) Dirichlet {\bf edge} restriction:
    \[
        \boxed{ \cI \left( H_t, E \right) := \Sf \left( H^{\sharp, +}_{t, D}, E, \TT^1 \right).}
    \]
\end{definition}

Let us prove that this definition indeed makes sense, and in particular that Dirichlet boundary conditions does not create essential spectrum. We set $\ell_D^\sharp := \{ 0 \} \times H^{1/2} \in \Lambda (\cH_b)$ the Lagrangian plane corresponding to Dirichlet boundary conditions (that is with domain $H^2(\Omega^+) \cap H^1_0(\Omega^+)$).

\begin{theorem}\label{th:bulkedge_well_defined}
    For all $E \notin \sigma(H_t)$, the spectral flow $\Sf  \left( H^{\sharp, +}_{t, D}, E, \TT^1 \right)$ is well defined. In addition, we have
    \begin{align*}
        \cI \left( H_t, E  \right) = \Sf \left( H^{\sharp, +}_{t, D}, E, \TT^1 \right) 
         = \Mas \left( \ell_{t}^\pm(E) , \ell^\sharp_D, \TT^1 \right) 
        = -  \Sf \left( H^{\sharp, -}_{t, D}, E, \TT^1 \right),
    \end{align*}
\end{theorem}

\begin{proof}
    Let $H_{t,D} := - \Delta + V_t$ be the operator acting on $L^2 (\Omega) \approx L^2(\Omega^- \cup \Omega^+)$, but with Dirichlet boundary conditions at $\{0 \} \times \Gamma$. Since $V_t$ is uniformly  bounded, the operators $H_t$ and $H_{t,D}$ are uniformly bounded from below. Consider $\Sigma \in \R$ such that
    \[
    \Sigma < \inf_{t \in \TT^1} \inf \sigma( H_t)  \quad \text{and} \quad \Sigma < \inf_{t \in \TT^1} \inf \sigma( H_{D,t}).
    \]
    We set $R_t := (H_t - \Sigma)^{-1}$ and $R_{t,D} := (H_{t,D} - \Sigma)^{-1}$, which are both bounded operators. It is a standard result (see for instance~\cite[Theorem XI.79]{reed1980methods} or~\cite{carron2002determinant}) that, for some $m \in \N$, $R_t^m - R_{t,D}^m$ is a compact (even trace-class) operator. In particular, for all $t \in \TT^1$, we have
    \[
    \sigma_{\rm ess}(H_t) = \sigma_{\rm ess}(H_{t,D}).
    \]
    Let $(a,b)$ denote an essential gap of these operators, and let $E \in (a,b)$ be a regular energy for both operators. We see that a branch of eigenvalues of $H_t$ crosses the energy $E$ downwards iff a branch of eigenvalues of $(H_t - \Sigma)^{-m}$ crosses $(E - \Sigma)^{-m}$ upwards. So we have
    \[
        \Sf \left( H_t, E, \TT^1  \right) = - \Sf \left( R_t^m, (\Sigma - E)^{-m}, \TT^1 \right)
    \]
    and similarly for $H_{t,D}$. Since $E \notin \sigma(H_t)$, we have $\Sf \left( H_t, E, \TT^1  \right)  = 0$. Introducing
    \[
    R^m_t(s) := R_t + s(R_{t, D}^m - R_t^m),
    \]
    we see that $s\mapsto R^m_t(s)$ is a continuous family of operators connecting $R_t^m$ and $R_{t, D}^m$. Since for all $s \in [0, 1]$, $R^m_t(s)$ is a compact perturbation of $R_t$, the essential gap does not close as $s$ varies. We deduce that the spectral flow of $t \mapsto R_t^m(s)$ is independent of $s$ (see for instance~\cite[Proposition 3]{phillips1996self} or~\cite[Lemma 4]{gontier2021spectral}). So
    \[
    \Sf \left( R_t^m, (\Sigma - E)^{-m}, \TT^1 \right) = \Sf \left( R_{t, D}^m, (\Sigma - E)^{-m}, \TT^1 \right),
    \]
    which gives
    \[
        0 = \Sf \left( H_t, E, \TT^1  \right) = \Sf \left( H_{t, D}, E, \TT^1  \right).
    \]
     The operator $H_{t, D}$ decouples the left and the right side, so $E$ is an eigenvalue of $H_{t, D}$ iff it is an eigenvalue of either $H^{\sharp, +}_{t, D}$ or $H^{\sharp, -}_{t, D}$. Actually, we have
    \begin{equation*}
        \Sf \left( H_{D, t}, E, \TT^1  \right) = \Sf \left( H_{D, t}^{\sharp, +}, E, \TT^1  \right) + \Sf \left( H_{D, t}^{\sharp, -}, E, \TT^1  \right),
    \end{equation*}
    and the result follows.
\end{proof}

\subsubsection{Junction case} Let us consider two families of potentials $V_{L, t}$ and $V_{R, t}$, continuously differentiable from $\TT^1$ to $L^\infty(\Omega)$. For $\chi : \Omega \to [0, 1]$ a bounded switch function with $\chi(x, \by) = 1$ for $x < -X$ and $\chi(x, \by) = 0$ for $x > X$, we set
\[
    H_t^\chi := - \Delta + V^\chi_t(\bx), \quad \text{with} \quad V^\chi_t := V_{L, t} \chi + V_{R, t} (1 - \chi).
\]
As in Section~\ref{ssec:junction}, $H_t^\chi$ models a junction between a left and right potential.

We denote by $H_{L, t}$ and $H_{R, t}$ the corresponding left and right Hamiltonians.  Let $E \in \R$ be in the resolvent set of both $H_{L, t}$ and $H_{R, t}$ for all $t \in \TT^1$, so that the Lagrangian planes $\ell^\pm_{L, t}(E)$ and $\ell^\pm_{R, t}(E)$ are well-defined. 

\begin{theorem}[Junctions in the Schrödinger case] \label{th:bec_schrodinger}
    Let $(a,b) \subset \R$ be such that, for all $t \in \TT^1$,
    \[
        (a,b) \cap \left( \sigma(H_{t, R}) \cup \sigma(H_{t, L}) \right)  = \emptyset.
    \]
    Then, for all $E \in (a,b)$, we have
    \[
        \Sf \left( H_t^\chi, E, \TT^1 \right) = \Mas \left( \ell_R^+(E), \ell_L^-(E), \TT^1  \right)
         = \cI \left( H_{R, t}, E \right) - \cI \left( H_{L, t}, E \right) .
    \]
    This number is independent of $\chi$ and of $E$ in the gap.
\end{theorem}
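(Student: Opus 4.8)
The plan is to prove the two equalities separately, reducing everything to the homotopy invariance of the Maslov index and of the spectral flow. Throughout, $E$ lies in the gap of both bulk families, so by Weyl's theorem $E \notin \sigma_{\rm ess}(H^\chi_t)$ for every $t$, and the eigenvalues of $H^\chi_t$ near $E$ are isolated with finite multiplicity; this is what makes $\Sf(H^\chi_t, E, \TT^1)$ well defined. First I would cut the tube at an arbitrary position $x_0$ and run the argument of Theorem~\ref{th:main_schrodinger} for the junction operator: using the Schrödinger analogue of Lemma~\ref{lem:E_eigenvalue_of_h} (third bullet of Theorem~\ref{th:everything_Schrodinger}), one has $\dim \Ker(H^\chi_t - E) = \dim(\ell^+_{\chi,t}(x_0,E) \cap \ell^-_{\chi,t}(x_0,E))$, and the crossing-form computation gives
\[
\Sf(H^\chi_t, E, \TT^1) = \Mas\left(\ell^+_{\chi,t}(x_0,E), \ell^-_{\chi,t}(x_0,E), \TT^1\right).
\]
Since elliptic regularity makes the traces of the decaying Cauchy solutions depend continuously on the cut $x_0$ in $\Lambda(\cH_b)$, and the Maslov index only sees the homotopy class, this number is independent of $x_0$.

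The heart of the proof is the splitting of this Maslov index as $\cI(\ell^+) - \cI(\ell^-)$, i.e. against the fixed reference plane $\ell^\sharp_D$. In finite dimensions (the Hill case) this was Lemma~\ref{lem:splitMaslov}, which rested on $\det(\cU_2^*\cU_1) = \det \cU_1 / \det \cU_2$; no such determinant exists on $L^2(\Gamma)$, which is exactly why the Schrödinger proof must be different. I would instead invoke the additivity of the Maslov index for loops: writing each loop as a path on $[0,1]$ with matching endpoints, the triple (Hörmander) correction in $\Mas(\ell_1,\ell_3) = \Mas(\ell_1,\ell_2)+\Mas(\ell_2,\ell_3)$ depends only on the endpoint triple and hence telescopes to zero around a loop. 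Taking $\ell_2 \equiv \ell^\sharp_D$ yields
\[
\Mas\left(\ell^+_{\chi,t}(x_0), \ell^-_{\chi,t}(x_0), \TT^1\right) = \cI\left(\ell^+_{\chi,t}(x_0), \TT^1\right) - \cI\left(\ell^-_{\chi,t}(x_0), \TT^1\right).
\]
This is legitimate precisely because $E$ sits in the bulk gaps: the intersections $\ell^\pm_{\chi,t}(x_0) \cap \ell^\sharp_D = \Ker(H^{\sharp,\pm}_{\chi,t}(\ell^\sharp_D) - E)$ are finite dimensional, so all three pairs are Fredholm and the individual indices $\cI(\cdot)$ are defined.

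With the index split, the two contributions homotope independently. Sliding only the right cut out to $x_0 = X$ keeps $\cI(\ell^+_{\chi,t}(x_0))$ constant, and on $\{x > X\}$ the potential is $V_{R,t}$, so $\ell^+_{\chi,t}(x_0) = \ell^+_{R,t}(x_0)$ and this index equals $\cI(\ell^+_{R,t}(E), \TT^1)$; sliding only the left cut out to $x_0 = -X$, where the potential is $V_{L,t}$, gives $\cI(\ell^-_{L,t}(E), \TT^1)$. The middle term $\Mas(\ell^+_R(E), \ell^-_L(E), \TT^1)$ equals the same difference by applying the splitting once more directly to the pair $(\ell^+_R, \ell^-_L)$, so all three quantities coincide. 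The same splitting exhibits the result as $\Sf(H^{\sharp,+}_{R,t}, E) - \Sf(H^{\sharp,-}_{L,t}, E)$, which I would cross-check at the operator level: growing a Dirichlet barrier at the interface decouples $H^\chi_t$ into the two half-tube operators, the spectral flow being preserved in the limit and additive over the direct sum, with the minus sign on the left piece arising exactly from the orientation sign ($-\omega$) in the left Green's identity. Finally, independence of $\chi$ and of $E$ in the gap is immediate, since any two admissible switch functions, and any two energies in the common gap, are joined by homotopies along which $E$ never meets $\sigma_{\rm ess}(H^\chi_t)$.

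The main obstacle is the splitting step in infinite dimensions. The delicate points are (i) verifying that the loops of Lagrangians stay in Fredholm position with $\ell^\sharp_D$ for all $t$, so that the loop indices $\cI(\cdot)$ are finite integers and the triple corrections are defined, and (ii) in the operator cross-check, controlling the spectrum near $E$ as the decoupling barrier is sent to infinity, so that no eigenvalue crosses $E$ during the limit and corrupts the spectral flow. Both rest on $E$ lying in the gap of the two bulk families, which confines all crossings to a discrete, finite-dimensional regime near the interface; establishing the required continuity of $x_0 \mapsto \ell^\pm_{\chi,t}(x_0,E)$ in $\Lambda(\cH_b)$ uses elliptic regularity and the continuous dependence of the decaying solutions on the cut.
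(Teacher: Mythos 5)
Your argument breaks down at its central step, the splitting
\[
\Mas\left(\ell^+_{\chi,t},\ell^-_{\chi,t},\TT^1\right)
= \Mas\left(\ell^+_{\chi,t},\ell^\sharp_D,\TT^1\right) - \Mas\left(\ell^-_{\chi,t},\ell^\sharp_D,\TT^1\right),
\]
which you justify by "additivity of the Maslov index with Hörmander corrections telescoping to zero around a loop", under the sole hypothesis that all three pairs are Fredholm along the loop. That general principle is \emph{false} in infinite dimensions, and the paper explicitly flags this (after Lemma~\ref{lem:splitMaslov}: "In the infinite dimensional case, such a splitting is not valid", and again at the end of Section~\ref{ssec:families_Schrodinger}). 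Translating to unitaries via Lemma~\ref{lem:Lagrangian_unitaries_cU}, the splitting with $\ell_2 \equiv \ell_N$ (so $\cU_2 \equiv \bbI$) would force $\Sf(\cU_1,1,\TT^1) = \Sf(\cU_1,-1,\TT^1)$ for any loop of unitaries $\cU_1(t)$ whose essential spectrum avoids $\pm 1$. But take $\cU_1(t)$ diagonal with essential spectrum fixed at $\{ \ri, -\ri\}$ and two bi-infinite sequences of eigenvalues, one in each of the arcs containing $+1$ and $-1$: in one loop the right-arc eigenvalues can each shift one slot counter-clockwise (one of them crossing $+1$) while the left-arc eigenvalues shift clockwise (one crossing $-1$ the other way), the eigenvector relabelling being undone by a norm-continuous conjugation which moves no spectrum. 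All three pairwise Fredholm conditions hold for every $t$, yet the two spectral flows are $+1$ and $-1$. So Fredholmness of the pairs, which is all your "delicate point (i)" checks, cannot yield the splitting; the counterexample is exactly the phenomenon (eigenvalue currents absorbed by essential spectrum between the two reference points) that distinguishes the PDE case from the Hill case, where the determinant argument of Lemma~\ref{lem:splitMaslov} is available.

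What makes the splitting true for the particular loops $\ell^\pm_{\chi,t}(E)$ is operator-theoretic input beyond Fredholmness, and this is how the paper proceeds: for the sharp cut $\chi_0 = \1(x<0)$, compare $H^{\chi_0}_t$ with the Dirichlet-decoupled operator $H_{D,t} = H^{\sharp,+}_t \oplus H^{\sharp,-}_t$; since $R_t^m - R_{D,t}^m$ is compact (difference of powers of resolvents, \cite[Theorem XI.79]{reed1980methods}), the essential spectra agree — so $E$ stays off $\sigma_{\rm ess}(H_{D,t})$, which is the missing ingredient — and the spectral flow is unchanged along the compact homotopy $R_t^m + s(R_{D,t}^m - R_t^m)$; the flow of $H_{D,t}$ then splits additively over the direct sum, and Theorem~\ref{th:main_schrodinger} converts each half-tube flow into $\cI(\ell^\pm)$, with the sign on the left from the orientation of Green's identity; general $\chi$ follows since $H^\chi_t - H^{\chi_0}_t$ is a relatively compact perturbation. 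Your "operator cross-check" is the germ of exactly this proof, but as you present it (a barrier grown to infinity, with the limit asserted to preserve the spectral flow) it is a sketch of the hard part, not a verification of an already-established identity: controlling the decoupling limit is precisely where the compact-resolvent-difference argument is needed. In short: promote the cross-check to the main argument, replace the limit by the compactness-plus-homotopy reasoning, and drop the Maslov-telescoping step, which cannot be repaired as stated.
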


\begin{proof}
    We first prove the result for $\chi_0(x) = \1(x < 0)$. Reasoning as in the proof of Theorem~\ref{th:bulkedge_well_defined}, we obtain
    \begin{equation*}
          \Sf \left( H_{t}^{\chi_0}, E, \TT^1  \right) 
          = \Sf \left( H_{t, D}^{\chi_0}, E, \TT^1  \right)
           = \Sf \left( H_{t, D}^{\chi_0, \sharp, +}, E, \TT^1  \right) + \Sf \left( H_{t, D}^{\chi_0, \sharp, -}, E, \TT^1  \right).
    \end{equation*}
    Noticing that $H_{t, D}^{\chi_0, \sharp, +}$ only depends on the right part of the potential, while $H_{t, D}^{\chi_0, \sharp, -}$ depends on the left part, together with our definition of the bulk/edge index, we get
    \begin{equation*}
         \Sf \left( H_{t}^{\chi_0}, E, \TT^1  \right) = \cI \left( H_{R, t}, E \right) -  \cI \left( H_{L, t}, E \right).
    \end{equation*}
  
  For a general switch function $\chi$, the function $\chi - \chi_0$ is compactly supported. In particular,
  \[
    H_t^\chi - H_t^{\chi_0} = (V_{L, t} - V_{R, t}) (\chi - \chi_0),
  \]
  is a compact perturbation of $H_t^{\chi_0}$ for all $t \in \TT_1$. Again, by robustness of the spectral flow with respect to compact perturbation, we obtain that $\Sf(H_t^\chi, E, \TT^1) = \Sf(H_t^{\chi_0}, E, \TT^1)$, which is independent of the switch $\chi$.
\end{proof}

\begin{remark}[Neumann boundary condition]
We defined the bulk/edge index $\cI(H_t, E)$ as the spectral flow of the {\em Dirichlet} boundary conditions (denoted by $\sharp_D$ here)
\[  
    \cI(H_t, E) = \Sf \left( H_{t}^{\sharp_D , +}, E, \TT^1\right).
\]
One can wonder what happens if one takes another (fixed) boundary condition, say Neumann (denoted by $\sharp_N$). By~\cite[Theorem XI.80]{reed1980methods}, $R_t^m - R_{N, t}^m$ is also compact for some $m > 0$, where $R_{N,t} := (H_{N,t} - \Sigma)^{-1}$ is defined as $R_{D, t}$, but with Neumann boundary conditions. Following the proof of Theorem~\ref{th:bec_schrodinger}, we obtain
\begin{equation*}
    \Sf \left( H_{t}^\chi, E, \TT^1  \right) = \Sf \left( H_{R, t}^{\sharp_N , +}, E, \TT^1\right) +  \Sf \left( H_{L, t}^{\sharp_N , -}, E, \TT^1\right),
\end{equation*}
Taking $H_{L,t}$ independent of $t$, dropping the notation $R$ for $H_{R, t}$, and using Theorem~\ref{th:bec_schrodinger} shows that
\[
\Sf \left( H_{t}^{\sharp_N , +}, E, \TT^1\right) = \Sf \left( H_{ t}^{\sharp_D , +}, E, \TT^1\right).
\]
In other words, the bulk/edge index defined with Neumann boundary conditions equals the one with Dirichlet boundary conditions. \\
This reasoning can be generalised for other boundary conditions, but not all of them (as was the case in the finite dimensional case), since some extensions might create essential spectrum, as we already mentioned.
\end{remark}


\subsection{Two-dimensional materials}
We now explain how to extend our results for the important case of two-dimensional materials. We write $\bx = (x,y) \in \R^2$. Let $V : \R^2 \to \R$ be a $\Z^2$-periodic bounded potential, and let $b : \R^2 \to \R$ be a bounded $\Z^2$-periodic magnetic field. A two-dimensional material with potential $V$ and under the magnetic field $b$ perpendicular to the plane is usually modelled by a Schrödinger operator of the form
\[
    \widetilde{H} := (- \ri \nabla + \bA(\bx))^2 + V(\bx) \quad \text{acting on $L^2(\R^2)$},
\]
where the magnetic vector potential $\bA = (A_1, A_2)^T$ satisfies $\partial_{x} A_2 - \partial_{y} A_1 = b(\bx)$. In the case where $b$ only depends on the $x$-direction $b(\bx) = b(x)$ (this is the case for constant magnetic fields for instance), we can choose the gauge
\[
    \bA = (0, A(x))^T, \quad \text{with} \quad A(x) := \int_0^{x} b(t) \rd t.
\]
The operator $\widetilde{H}$ then takes the form
\[
    \widetilde{H} := - \partial_{xx}^2 + (- \ri \partial_y + A(x))^2 + V(\bx).
\]
Since the operator $\widetilde{H}$ commutes with $\Z$-translations in the $y$-direction, one can perform a partial Bloch transform~\cite{reed1978analysis} in this direction. One obtains the operators 
\[
    \widetilde{H}_t := - \partial_{xx}^2 + (- \ri \partial_y + A(x) + 2 \pi t)^2 + V(\bx),
\]
where $k := 2 \pi t$ corresponds to the Bloch quasi-momentum in the $y$-direction. The operators $\widetilde{H}_t$ are essentially self-adjoint on $L^2(\R \times \TT^1, \C)$, with domain $H^2(\R \times \TT^1, \C)$. When these operators are cut, we get the operators
\[
    \widetilde{H}^\sharp_t :=  - \partial_{xx}^2 + (- \ri \partial_y + A(x) + 2 \pi t)^2 + V(\bx), 
    \quad \text{acting on} \quad L^2 \left( \R^+ \times \TT^1, \C   \right).
\]
These operators are not essentially self-adjoint, and the minimal/maximal domains are respectively given by
\[
    \widetilde{\cD}^\sharp_{\rm min} = \cD^\sharp_{\rm min} = H^2_0(\Omega^+) \quad \text{and} \quad
    \widetilde{\cD}^\sharp_{\rm max} = \cD^\sharp_{\rm max} = H^2(\Omega^+),
\]
independent of $t$. Although the kinetic operator now depends on $t$, it only twists the functions in the direction parallel to the cut. In particular, the second Green's identity in Lemma~\ref{lem:Green_PDE} still holds: 
\[
   \forall \phi, \psi \in \widetilde{\cD}^\sharp_{\rm max}, \quad  \bra \phi, \widetilde{H}^\sharp_t \psi \ket_{L^2(\Omega^+)} - \bra \widetilde{H}^\sharp_t \phi, \psi \ket_{L^2(\Omega^+)} = \omega(\Tr \, \phi, \Tr \, \psi),
\]
with the same $\Tr$ map and the same $\omega$ symplectic form as in the previous section (independent of $t$).

In particular, all previous results stated for the operators $H_t^\sharp$ also hold for the operators $\widetilde{H}_t^\sharp$. There is a slight abuse of notation concerning the spectral flow: the family $t \mapsto \widetilde{H}_t$ is not $1$ periodic but {\em quasi-periodic}, in the sense that $\widetilde{H}_{t+1}$ is unitary equivalent to $\widetilde{H}_t$:
\[
    \widetilde{H}_{t+1} = S^* \widetilde{H}_t S, \quad \text{with the unitary $S$ defined by} \quad (Sf)(x,y) := \re^{ 2 \ri \pi y} f(x,y).
\] 
A similar relation holds for the Dirichlet edge operator $\widetilde{H}_{t, D}^\sharp$, since the Dirichlet domain is invariant by the $S$ operator. The spectra $\sigma(\widetilde{H}_t)$ and $\sigma(\widetilde{H}_{t, D}^\sharp)$ are still $1$-periodic, and we can again define the spectral flow of such quasi-periodic family of operators as the number of eigenvalues going downwards in a gap. This allows to define the bulk/edge index for the operators $\widetilde{H}_t$.

Let us consider a junction between two such materials, of the form
\[
\widetilde{H}_t^{\rm junct} :=  - \partial_{xx}^2 + (- \ri \partial_y + A^{\rm junct}(x) + 2 \pi t)^2 + V^{\rm junct}(\bx).
\]
where $A^{\rm junct}(x)$ and $V^{\rm junct}(\bx)$ are so that
\[
    \left( A^{\rm junct}, V^{\rm junc} \right) = \begin{cases}
        (A^L, V^L), & x < -X, \\
        (A^R, V^R), & x > X,
    \end{cases}
\]
for some $X > 0$. Defining $\widetilde{H}_t^{L/R} :=  - \partial_{xx}^2 + (- \ri \partial_y + A^{L/R}(x) + 2 \pi t)^2 + V^{L/R}(\bx)$, we can prove as before that, for all $E \notin \left( \sigma (\widetilde{H}_t^L) \cap \sigma (\widetilde{H}_t^R) \right)$, we have
\[
    \boxed{ \Sf \left( \widetilde{H}_t^{\rm junct}, E, \TT^1 \right) =  \cI \left( \widetilde{H}_t^{R}, E \right) -  \cI \left( \widetilde{H}_t^{L}, E \right). }
\]
We do not repeat the proof, as it is similar to the one of Theorem~\ref{th:bec_schrodinger}.

\begin{example}[Landau Hamiltonian]
    Assume $V = 0$, and $b \in \R^*$ is constant. We are studying the Landau Hamiltonian 
    \[
        \widetilde{H} := - \partial_{xx}^2 + ( - \ri \partial_y + b x )^2, \quad \text{acting on } \quad L^2(\R^2).
    \]
    It is well-known that $H$ has a discrete (essential) spectrum $\sigma(H)  = | b | (2 \N_0 + 1)$. Applying a Bloch transform (instead of the usual Fourier transform) in the $y$-direction gives the operators
    \[
    \widetilde{H}_t := - \partial_{xx}^2 + (- \ri \partial_y + b x + 2 \pi t)^2 = - \partial_{xx}^2 + \left(- \ri \partial_y + b (x + \tfrac{2 \pi t}{b}) \right)^2,
    \]
    which are all unitarily equivalent to $\widetilde{H}_{t=0}$, up to the translation $x \mapsto x + \tfrac{2 \pi t}{b}$. We recognise a {\em charge pumping} phenomenon~\cite{thouless1983quantization}, where the system undergoes a translation in the $x$--direction of $-\tfrac{2 \pi}{b}$ as $t$ goes from $0$ to~$1$.
    Let $E \in \R \setminus \sigma(H)$, and let $\cN_b(E)$ be the number of Landau bands below $E$, that is $\cN_b(E) = \lceil \frac12 \left( \tfrac{E}{| b |} - 1 \right) \rceil$. Each Landau band has a constant electronic density $\frac{| b |}{2 \pi}$, in the sense that there are $\frac{| b |}{2 \pi }$ electrons per unit cell in each Landau band. So, as $t$ goes from $0$ to $1$, the total charge which is pumped below $E$ is $\frac{| b |}{2 \pi} \cN_b(E) \times (-\tfrac{2 \pi}{b}) = - {\rm sign}(b) \cN_b(E)$. Reasoning as in~\cite{hempel2011variational, hempel2012dislocation, gontier2021spectral}, we deduce that
    \[
    \cI \left( \widetilde{H_t}, E  \right) = -  {\rm sign}(b) \cN_b(E) = -  {\rm sign}(b) \left\lceil \frac12 \left( \frac{E}{|b|} - 1 \right) \right\rceil.
    \]
    Let $\widetilde{H}^\sharp$ be the Landau operator on the half space $L^2(\R^+ \times \R)$ with Dirichlet (or Neumann) boundary condition at $x = 0$. The previous result shows that, for the family $\widetilde{H}^\sharp_t$, there is a spectral flow of $\cN_b(E)$ eigenvalues going upwards (if $b > 0$) or downwards (if $b < 0$) in the gap containing $E$, as $t$ goes from $0$ to $1$. In particular, all ``bulk'' gaps of $\widetilde{H}^\sharp$ are filled with ``edge'' spectrum, a well-known result~\cite{De_Bi_vre_2002}. \\
    
    Let $V(x,y)$ be a bounded external potential, $\Z$-periodic in the $y$ variable. For $s > 0$, we denote by
    \[
        \widetilde{H}(s) := - \partial_{xx}^2 + \left(- \ri \partial_y + b x \right)^2 + s V(x,y).
    \]
    This operator still commutes with $\Z$-translations in the $y$-variable, so we can apply a partial Bloch transform. For $s_0 > 0$ small enough, $E$ is in the resolvent set of $H(s)$ for all $s \in [0, s_0]$. So $\cI(\widetilde{H_t}(s), E) $ is independent of $s$ (see proof of Theorem~\ref{th:bulkedge_well_defined} above), and $\cI(\widetilde{H_t}(s), E) = - {\rm sign}(b) \cN_b(E)$ as well, for all $s \in [0, s_0]$.
\end{example}

It would be interesting to relate our bulk/edge index $\cI \left( \widetilde{H}_t, E \right)$ to a bulk index of the operator $\widetilde{H}$ (for instance to a Chern number or Chern marker), in the general case.

\subsection{General self-adjoint extensions}
\label{sec:discussion_boundary_space}

In this section, we introduce another symplectic boundary space $(\widetilde{\cH_b}, \widetilde{\omega})$ and another trace map $\widetilde{\Tr}$, which allows to treat the general case of self-adjoint extensions of $H^\sharp_{\rm min}$ with domains $\cD^\sharp \subset \cD_{\rm max}^\sharp$ (not necessarily included in $H^2(\Omega^+)$). 

The main idea of the section is to use a Green's identity involving a regularized Neumann trace. This was first introduced by Vishik~\cite{vishik1952general} and Grubb~\cite{grubb1968characterization}. We skip most of the proofs of this section, and refer to the monograph~\cite{Behrndt_2020} for details. Similar ideas have been used in the context of elliptic operators in~\cite{deng2011multi} (see also~\cite{deng2006infinite, deng2008infinite}).

\subsubsection{The regularized Green's formula}
Recall that
\[
    \cD_{\rm max}^\sharp = \left\{ \psi \in L^2(\Omega^+), \quad (- \Delta + V) \psi \in L^2(\Omega^+ )\right\}.
\]
For any $E \in \R$, we introduce the null space
\[
       \widetilde{\cS_E} := {\rm Ker}(\cD_{\rm max}^\sharp - E) = \left\{ \psi \in \cD_{\rm max}^\sharp, \ ( - \Delta + V) \psi = E \psi  \right\}.
\]
The space $\cS_E$ introduced in~\eqref{eq:def:SE} is $\cS_E = \widetilde{\cS_E} \cap H^2(\Omega^+)$.

\medskip

Let $H_D^\sharp$ be the Dirichlet extension of $( - \Delta + V)$ on $L^2(\Omega^+)$, that is with domain $\cD_D^\sharp := H^2(\Omega^+) \cap H^1_0(\Omega^+)$, and let $\Sigma \in \R \setminus \sigma (H_D^\sharp)$ be a fixed energy in the resolvent set of $H_D^\sharp$. For $\psi \in \cD_{\rm max}^\sharp$, we set
\[
    \psi_D := (H_D^\sharp - \Sigma)^{-1} (H_{\rm max}^\sharp - \Sigma) \psi \quad \in H^2(\Omega^+) \cap H^1_0(\Omega^+),
\]
and
\[
    \psi_\Sigma := \psi - \psi_D = \left( \bbI -   (H_D^\sharp - \Sigma)^{-1} (H_{\rm max}^\sharp - \Sigma) \right) \psi.
\]
By definition, we have the decomposition $\psi = \psi_D+ \psi_\Sigma$. In addition, we have
\begin{align*}
     \left( H_{\rm max}^\sharp - \Sigma \right) \psi_\Sigma & = 0,
\end{align*}
hence $\psi_\Sigma \in \widetilde{\cS_\Sigma}$. This gives a decomposition
\[
     \cD_{\rm max}^\sharp = \cD_D^\sharp + \widetilde{\cS_\Sigma},  \quad \psi = \psi_D + \psi_\Sigma.
\]
For $\psi = \psi_D + \psi_\Sigma$  a smooth function, we define the {\em regularized} trace-map
\[
    \boxed{ \widetilde{\Tr}(\psi) := \left( \gamma^D \psi,  \gamma^N \psi_D \right).}
\]
The term ``regularized'' comes from the fact that only the $\psi_D$ part appears in the Neumann trace. Since $\gamma^D \psi_D = 0$, the Dirichlet trace is also $\gamma^D \psi = \gamma^D \psi_\Sigma$.

\begin{lemma}
    The map $\widetilde{\Tr}$ can be extended as a bounded map from $\cD_{\rm max}^\sharp$ (equipped with the graph norm) to the boundary space
    \[
        \widetilde{\cH_b} := H^{-1/2}(\Gamma) \times H^{1/2}(\Gamma).
    \]
    This extension $\widetilde{\Tr} : \cD_{\rm max}^\sharp \to \widetilde{\cH_b}$ is surjective. The following Green's identity holds: for all $\phi, \psi \in \cD_{\rm max}^\sharp$, we have
    \[
        \bra \phi, H_{\rm max}^\sharp \psi \ket - \bra H_{\rm max}^\sharp \phi,  \psi \ket
         = \bra \gamma^D \phi, \gamma^N \psi_D \ket_{H^{-1/2}, H^{1/2}} - \bra \gamma^N \phi_D, \gamma^D \psi \ket_{H^{1/2}, H^{-1/2}}.
    \]
\end{lemma}
We refer to~\cite[Theorem 8.4.1]{Behrndt_2020} for the proof. Here, $\bra \cdot, \cdot \ket_{H^{-1/2}, H^{1/2}}$ denotes the duality product. 

We introduce the symplectic form $\widetilde{\omega} : \widetilde{\cH_b} \times \widetilde{\cH_b} \to \C$ defined by
\[
    \forall (f,f'), (g,g') \in \widetilde{\cH_b}, \quad \widetilde{\omega}((f,f'), (g,g')) := \bra f, g' \ket_{H^{-1/2}, H^{1/2}} - \bra f', g \ket_{H^{1/2}, H^{-1/2}}.
\]
One can check that $\left(\widetilde{\cH_b}, \widetilde{\omega}\right)$ is a symplectic Hilbert space. With this, the Green's identity takes the form
\[
    \forall \phi, \psi \in \cD_{\rm max}^\sharp, \quad \bra \phi, H_{\rm max}^\sharp \psi \ket - \bra H_{\rm max}^\sharp \phi,  \psi \ket
    = \widetilde{\omega}\left( \widetilde{\Tr}(\phi), \widetilde{\Tr}(\psi) \right).
\]
Unlike the previous $\Tr$ map in~\eqref{eq:def:Tr_S}, the $\widetilde{\Tr}$ map now depends on the operator $H_{\rm max}^\sharp$ and on the choice of $\Sigma$.

\subsubsection{General self-adjoint extensions}

Since the trace map $\widetilde{\Tr}$ is continuous and onto, one can repeat the arguments of Theorem~\ref{th:self_adjoint_extensions_are_Lagrangian_planes}. We obtain the following.
\begin{theorem} \label{th:self_adjoint_extensions_are_Lagrangian_planes_Schrodinger_2}
    Let $\cD^\sharp$ be a domain satisfying $\cD_{\rm min}^\sharp \subset \cD^\sharp \subset \cD_{\rm max}^\sharp$ and let $\ell := \widetilde{\Tr}( \cD^\sharp)$. Then the adjoint domain is $(\cD^\sharp)^* = \widetilde{\Tr}^{-1} \left(\ell^\circ \right)$. \\
    In particular, $(H^\sharp_{\rm max}, \cD^\sharp)$ is a self-adjoint extension iff
    \[
        \exists \ell \in \Lambda(\widetilde{\cH_b}) \quad \text{so that} \quad \cD^\sharp = \widetilde{\Tr}^{-1} (\ell).
    \]
\end{theorem}
This gives a one-to-one correspondence between all self-adjoint extensions of $(- \Delta + V)$ on the half-tube $L^2(\Omega^+)$, and the Lagrangian planes of $\left(\widetilde{\cH_b}, \widetilde{\omega}\right)$.  \\

The symplectic space $(\widetilde{\cH_b},\widetilde{\omega})$ satisfies Assumption B (hence A). Indeed, let $V : H^{1/2}(\Gamma) \to H^{-1/2}(\Gamma)$  be the map such that for all $f \in H^{1/2}(\Gamma)$ and all $g \in H^{-1/2}(\Gamma)$, we have
\begin{equation} \label{eq:def:V}
    \bra f, g \ket_{H^{1/2}, H^{-1/2}} = \bra f, V^* g \ket_{H^{1/2}} 
    =  \bra V f, g \ket_{H^{-1/2}}.
\end{equation}
The existence of such map $V$ comes from Riesz' Lemma, and we can check that $V$ is unitary. This time, we have $J = \begin{pmatrix}
    0 & V \\ - V^* & 0
\end{pmatrix}$, which satisfies Assumption B.

\medskip

In particular, the Lagrangian planes of $(\widetilde{\cH_b}, \widetilde{\omega})$ are in one-to-one correspondence with the unitaries $\widetilde{\cU}$ of $H^{-1/2}(\Gamma)$ with
\[
\widetilde{\ell} = \left\{ \begin{pmatrix} 1 \\ \ri V^*  \end{pmatrix}  f +  \begin{pmatrix} 1 \\ -\ri V^* \end{pmatrix} \widetilde{ \cU } f, \quad f \in H^{-1/2}(\Gamma) \right\}.
\]
As the Hilbert space $H^{-1/2}(\Gamma)$ is unitary equivalent to $L^2(\Gamma)$ and to $H^{1/2}(\Gamma)$, one has similar one-to-one correspondence replacing $H^{-1/2}(\Gamma)$ by $L^2(\Gamma)$ or $H^{1/2}(\Gamma)$.


\subsubsection{The planes $\widetilde{\ell(E)}$}

Let us now focus on the boundary traces of $\widetilde{\cS_E} = {\rm Ker}(H_{\rm max}^\sharp - E)$. For $E \in \R$, we introduce the planes
\[
    \widetilde{\ell}(E) := \widetilde{\Tr}( \widetilde{\cS_E}) \quad \subset \widetilde{\cH_b}.
\]

\begin{remark} \label{rem:creation_essential_spectrum}
    The plane $\ell_D := \{ 0 \} \times H^{1/2}(\Gamma)$ is Lagrangian, and corresponds to the Dirichlet extension. However, the plane $\ell_\Sigma := H^{-1/2}(\Gamma) \times \{ 0 \}$ is Lagrangian, but does not correspond to the Neumann extension. It rather corresponds to $\widetilde{\ell}(E = \Sigma)$. The self-adjoint extension corresponding to the Lagrangian plane $\ell_\Sigma$ has $\Sigma$ as an eigenvalue of infinite multiplicities (hence $\Sigma$ is in its essential spectrum).
\end{remark}

The counterpart of Theorem~\ref{th:Lagrangian_planes_Schrodinger_lE} is the following. 
\begin{theorem} \label{th:Lagrangian_planes_Schrodinger_tildelE}
    For all $E \in \R \setminus \sigma_{\rm ess} \left( H^\sharp_D  \right)$, $\widetilde{\ell}(E)$ is a Lagrangian plane of $\left( \widetilde{\cH_b}, \widetilde{\omega} \right)$.
\end{theorem}
Unlike Theorem~\ref{th:Lagrangian_planes_Schrodinger_lE}, only the essential spectrum of $H^\sharp_D$ matters. This result is independent of the value of $V$ on the left side $\Omega^-$ (see Remark~\ref{rem:ell+_and_ell-}).

\begin{proof}
    First, it is clear that $\widetilde{\ell}(E)$ is isotropic (see the proof of Theorem~\ref{th:Lagrangian_planes_Schrodinger_lE}).
    
   \underline{Case 1: $E$ is in the resolvent set.} Let us first prove the result for $E \in \R \setminus \sigma \left( H^\sharp_D  \right)$. In this case, the proof follows the lines of~\cite[Proposition 8.4.4]{Behrndt_2020}. 

  Let $(f, f')  \in \widetilde{\ell}(E)$, and let $\psi_E \in \widetilde{\cS_E}$ be such that $\widetilde{\Tr}(\psi_E) = (f,f')$. Write $\psi_E = \psi_D + \psi_\Sigma$ with $\psi_D \in \cD_D^\sharp$ and $\psi_\Sigma \in \widetilde{\cS_\Sigma}$. Applying $(H_{\rm max}^\sharp - E)$ shows that
    \[
        0 = (H_D^\sharp - E) \psi_D + (\Sigma - E) \psi_\Sigma, \quad \text{hence} \quad \psi_D = (E - \Sigma ) (H_D^\sharp - E)^{-1} \psi_\Sigma.
    \]
    So
    \[
        \psi_E = \left( 1 + (E - \Sigma ) (H_D^\sharp - E)^{-1}  \right) \psi_\Sigma.
    \]
    In particular, we have
    \begin{equation} \label{eq:f-f'}
        f = \gamma^D \psi_E = \gamma^D \psi_\Sigma, \quad \text{and} \quad f' = \gamma^N \psi_D =  (E - \Sigma )  \gamma^N (H_D^\sharp - E)^{-1} \psi_N.
    \end{equation}
    Recall that $\widetilde{\Tr}$ is bijective from $\cD_{\rm max}^\sharp$ to $H^{-1/2}(\Gamma) \times H^{1/2}(\Gamma)$, and set $G_\Sigma (f) := \widetilde{\Tr}^{-1} (f, 0)$. By decomposing $G_\Sigma(f)$ as $G_\Sigma(f) = g_D + g_\Sigma$, one must have $\gamma^N(g_D) = 0$ with $g_D \in \cD_D^\sharp$. This implies $g_D = 0$ by the unique continuation principle, so $G_\Sigma(f) = g_\Sigma \in \widetilde{\cS_\Sigma}$. In other words, $G_\Sigma$ is the map from $H^{-1/2}(\Gamma)$ to
  $\widetilde{\cS_\Sigma}$ so that $\gamma^D (G_\Sigma f) = f$ (this map is called the $\gamma$-field). 
  
  The first equation of~\eqref{eq:f-f'} reads $\psi_\Sigma = G_\Sigma f$, and the second shows that
    \[
       f' = M(E) f, \quad \text{with} \quad M(E) := (E - \Sigma) \gamma^N  (H_D^\sharp - E)^{-1} G_\Sigma.
    \]
    The map $M(E) : H^{-1/2}(\Gamma) \to H^{1/2}(\Gamma)$ is called the Weyl $M$-function. 
    
    Conversely, let $f \in H^{-1/2}(\Gamma)$, and set $f' = M(E) f$. By defining $\psi_E := \left( 1 + (E - \Sigma ) (H_D^\sharp - E)^{-1}  \right) G_\Sigma f$, we can check that $\psi_E \in \widetilde{\cS_E}$ and $\widetilde{\Tr}(\Psi_E) = (f, f')$. So $(f, f') \in \widetilde{\ell_E}$. This proves that
    \[
        \widetilde{\ell}(E) = \left\{ (f, M(E) f), \quad f \in H^{-1/2}(\Gamma)  \right\},
    \]
    that is $\widetilde{\ell^+}(E)$ is the graph of the map $M(E)$. 
        
    The map $M(E)$ is a bounded operator from $H^{-1/2}(\Gamma)$ to $H^{1/2}(\Gamma)$. The fact that $\widetilde{\ell}(E)$ is isotropic is equivalent to the fact that $M(E)$ is symmetric, in the sense
    \[
        \forall f, g \in H^{-1/2}(\Gamma), \quad \bra f, M(E) g \ket_{H^{-1/2}, H^{1/2}} = \bra M(E) f,  g \ket_{H^{1/2}, H^{-1/2}} .
    \]
    Now, let $(g,g') \in \left( \widetilde{\ell} (E)\right)^\circ$. For all $f \in H^{-1/2}(\Gamma)$, we have
    \[
        0 = \widetilde{\omega}( (f, M(E)f), (g,g') ) = \bra f, g' \ket_{H^{-1/2}, H^{1/2}} - \bra M(E) f,  g \ket_{H^{1/2}, H^{-1/2}}.
    \]
    Comparing with the previous line, this implies
    \[
        \forall f \in H^{-1/2}(\Gamma), \quad \bra f, (g' - M(E) g) \ket_{H^{-1/2}, H^{1/2}} = 0,
    \]
    hence $g' = M(E) g$, and $(g, g') \in \widetilde{\ell}(E)$. So $\widetilde{\ell}(E) = \left( \widetilde{\ell}(E) \right)^\circ$ as wanted.\\

    \underline{Case 2: $E$ is an eigenvalue of $H_D^\sharp$}. We now prove the result when $E \in \sigma(H_D^\sharp) \setminus \sigma_{\rm ess}(H_D^\sharp)$ is an isolated eigenvalue of $H_D^\sharp$ of finite multiplicity. This case is novel to the best of our knowledge.
    
    As before, we consider $\psi_E \in \widetilde{\cS_E}$ and set $\widetilde{\Tr}(\psi_E) = (f,f')$. We decompose $\psi_E$ as $\psi_E = \psi_D + \psi_\Sigma$, and we get again
    \[
        (H_D^\sharp - E) \psi_D = (E - \Sigma) \psi_\Sigma.
    \]
    This time, the operator $(H_D^\sharp - E)$ is non invertible. We consider the decomposition $L^2(\Omega^+) = K \oplus K^\perp$ with $K := {\rm Ker}(H_D^\sharp - E)$. We deduce first that $\psi_\Sigma \in K^\perp$, and then that
    \[
        \psi_D =  (E - \Sigma) (H_D^\sharp - E)^\dagger \psi_\Sigma + \psi_K,
    \]
    for some $\psi_K \in K$. Here, $(H_D^\sharp - E)^\dagger$ denotes the pseudo-inverse of $(H_D^\sharp - E)$. It is a bounded operator on $L^2(\Omega^+)$, as $E$ is an isolated eigenvalue. Taking boundary trace shows that
    \[
        f = \gamma^D \psi_\Sigma \quad \text{and} \quad f' = M(E) f + \gamma^N \psi_K, \quad \text{with} \quad M(E) := (E - \Sigma) \gamma^N  (H_D^\sharp - E)^{\dagger} G_\Sigma.
    \]
    We deduce that
    \[
        \widetilde{\ell_E} \subset \left\{  (f, M(E) f), \ f \in \gamma^D K^\perp \right\} + \{ (0, \gamma^N \psi_K), \ \psi_K \in K \}.
    \]
    Conversely, given $f \in  \gamma^D K^\perp$ and $\psi_K \in K$, the function
    \[
        \psi_\Sigma := \left( 1 + (E - \Sigma)(H_D^\sharp - E)^\dagger  \right) G_\Sigma f + \psi_K,
    \]
    is in $\widetilde{\cS_E}$ (we use here that $(H_D^\sharp - E) (H_D^\sharp - E)^\dagger = P_K^\perp$, where $P_K^\perp$ is the orthogonal projection on $K^\perp$, and that $G_\Sigma f$ is in $K^\perp$), and satisfies $\widetilde{\Tr}(\psi_\Sigma)= (f, M(E) f) + (0, \gamma^N \Psi_K)$. So we have the equality
    \[
        \widetilde{\ell_E} = \left\{  (f, M(E) f), \ f \in \gamma^D K^\perp \right\} + \{ (0, \gamma^N \psi_K), \ \psi_K \in K \}.
    \]
    From the isotropy of $\ell(E)$, we deduce that the operator $M(E)$ is symmetric on $\gamma^D K^\perp$, in the sense that 
    \[
    \forall f, g \in \gamma^D K^\perp, \quad \bra f, M(E) g \ket_{H^{-1/2}, H^{1/2}} = \bra M(E) f,  g \ket_{H^{1/2}, H^{-1/2}} .
    \]
    In addition, we also have
    \[
        \forall f \in \gamma^D K^\perp, \quad \forall \psi_K \in K, \quad   \bra f, \gamma^N \psi_K \ket_{H^{-1/2}, H^{1/2}} = 0.
    \]
    Since $ \gamma^D K^\perp$ is of codimension ${\rm dim}(K)$ in $H^{-1/2}(\Gamma)$ while $\{ \gamma^N \psi_K, \ \psi_K \in K\}$ is of dimension ${\rm dim}(K)$, we deduce that if $h \in H^{1/2}(\Gamma)$ satisfies $\bra f, h \ket_{H^{-1/2}, H^{1/2}} = 0$ for all $f \in \gamma^D K^\perp$, then $h = \gamma^N \psi_K$ for some $\psi_K \in K$.
    
    Let us finally prove that $\widetilde{\ell_E}$ is Lagrangian. Let $(g,g') \in \left( \widetilde{\ell} (E)\right)^\circ$, and let $\psi_g \in \cD_{\rm max}^\sharp$ be such that $\widetilde{\Tr}(\psi_g) = (g,g')$. We write $\psi_g = \psi_K + \psi_K^\perp$ with $\psi_K = P_K \psi_g$. Taking Dirichlet trace shows that $g = \gamma^D \psi_g = \gamma^D \psi_K^\perp \in \gamma^D K^\perp$. We set $h := g' - M(E) g$. We have, for all $f \in \gamma^D K^\perp$,
    \begin{align*}
    0 & = \widetilde{\omega}( (f, M(E)f), (g,g') ) = \bra f, g' \ket_{H^{-1/2}, H^{1/2}} - \bra M(E) f,  g \ket_{H^{1/2}, H^{-1/2}}  \\
     & = \bra f, g' - M(E) g \ket_{H^{-1/2}, H^{1/2}} = \bra f, h \ket_{H^{-1/2}, H^{1/2}}.
    \end{align*}
    We deduce that $h = \gamma^D \psi_K'$ for some $\psi_K' \in K$, hence $g' = M(E) g + \gamma^D \psi_K'$. So $(g,g') \in \widetilde{\ell_E}$, and $\left( \widetilde{\ell_E} \right)^\circ =  \widetilde{\ell_E} $ is Lagrangian.
\end{proof}

\subsubsection{Concluding remarks}

The use of the boundary trace $\widetilde{\cH_b}$ is suitable to study Schrödinger operators on the half-tubes $\Omega^+$ or $\Omega^-$. Indeed, one can detect that $E$ is an eigenvalue for a general self-extension $(H^\sharp, \cD^\sharp)$ as the crossing of the Lagrangian planes $\widetilde{\ell}(E)$ and $\ell^\sharp$ in $\widetilde{\cH_b}$. In addition, given a family of self-adjoint operators parametrized by $\ell_t^\sharp$, one can compute the spectral flow of this family as the Maslov index $\Mas(\widetilde{\ell}(E), \ell^\sharp_t, \TT^1)$. Since $(\widetilde{\cH_b}, \widetilde{\omega})$ satisfies Assumption B (hence A), this can be done using unitaries.

\medskip

This setting is however not suitable to study the junction case, or more generally, to study operators on the whole tube $\Omega$. The reason is the following. Let us consider the corresponding objects on the left tube $\Omega^-$. The trace operators $\widetilde{\Tr}^-$ and $\widetilde{\Tr}^+$ depend on the left and right operators $H_D^{\sharp, -}$ and $H_D^{\sharp, +}$ respectively. In particular, they are unrelated! There is no analogue to Lemma~\ref{lem:E_eigenvalue_of_H} in this setting: the crossing of $\widetilde{\ell^+}(E)$ and $\widetilde{\ell^-}(E)$ does not imply that $E$ is an eigenvalue of the bulk operator $H$. For instance, for $E = \Sigma$, we have $\widetilde{\ell^+}(\Sigma) = \widetilde{\ell^-}(\Sigma) = H^{-1/2}(\Gamma) \times \{ 0 \}$, but $\Sigma$ can be in the resolvent set of $H$. This is the reason why we chose to work in the $H^2(\Omega^\pm)$ setting, and to use the trace operator $\Tr$.


 \bibliographystyle{my-alpha}
\bibliography{biblio}

\end{document}